\newcommand{\R}{\mathbb{R}}
\newcommand{\e}{\epsilon}
\newcommand{\complex}{\mathbb{C}}
\newcommand{\jap}[1]{\langle #1 \rangle}
\newcommand{\weak}{\rightharpoonup}
\renewcommand{\Re}{\mathop{\text{Re}}}
\renewcommand{\Im}{\mathop{\text{Im}}}
\newcommand{\m}[1]{
	\ifdefequal{#1}{1}
	{\mathbbm{#1}}
	{\mathbb{#1}}
}
\newcommand{\q}[1]{\mathcal{#1}}
\newcommand{\nz}[1]{\vvvert #1 \vvvert}
\newcommand{\ds}{\displaystyle}
\DeclareMathOperator{\sgn}{\mathrm{sgn}}
\DeclareMathOperator{\Ai}{\mathrm{Ai}}
\theoremstyle{plain}
\newtheorem{thm}{Theorem}
\newtheorem*{thm*}{Theorem}
\newtheorem{prop}[thm]{Proposition}
\newtheorem{lem}[thm]{Lemma}
\theoremstyle{definition}
\theoremstyle{remark}
\newtheorem{nb}[thm]{Remark}
\numberwithin{equation}{section}
\newtheoremstyle{mytheoremstyle} % name
{\topsep}                    % Space above
{\topsep}                    % Space below
{}                   % Body font
{}                           % Indent amount
{\scshape}                   % Theorem head font
{.}                          % Punctuation after theorem head
{.5em}                       % Space after theorem head
{}  % Theorem head spec (can be left empty, meaning ‘normal’)
\theoremstyle{mytheoremstyle} 
\theoremstyle{mytheoremstyle} 
\date{}
\author{Sim\~ao Correia \and Raphaël Côte}
\title{Perturbation  at blow-up time of self-similar solutions for the modified Korteweg-de Vries equation}
\subjclass[2020]{35Q53, 35B44, 35C06, 35C20} %     
\keywords{Self-similar solution, blow-up, modified Korteweg-de Vries equation}
\begin{document}

\setlength{\parindent}{0em}
\begin{abstract}
	\noindent We prove a first stability result of self-similar blow-up for the modified KdV equation on the line. More precisely, given a self-similar solution and a sufficiently small regular profile, there is a unique global solution which behaves at $t=0$ as the sum of the self-similar solution and the smooth perturbation. 
\end{abstract}

\maketitle

\section{Introduction}

\subsection{Description of the problem and motivation}

We consider the modified Korteweg-de Vries equation on the whole line
\begin{equation}\tag{mKdV}\label{mKdV}
\partial_t u + \partial_{xxx} u = \pm \partial_x (u^3), \quad (t,x) \in \m R^2, \quad u(t,x) \in \m R.
\end{equation}
Throughout this work, the specific sign of the nonlinearity is irrelevant. To simplify the exposition, we treat the focusing case (with the + sign), even though the results presented also hold for the defocusing one.
\bigskip

This equation admits a scaling invariance: if $u$ is a solution, so is $u_\lambda(x,t)=\lambda u(\lambda^3 t, \lambda x)$, for any $\lambda>0$. As a consequence, one may look for self-similar solutions of \eqref{mKdV}, which are invariant under scalings. A simple computation shows that these solutions are of the form
\begin{gather} \label{eq:selfsim}
S(t,x)=\frac{1}{t^{1/3}}V\left(\frac{x}{t^{1/3}}\right),\quad V'' - \frac{y}{3}V = V^3 + \alpha,\ \alpha\in\R.
\end{gather}
The existence of profiles $V$ can be studied using either ODE techniques (\cite{DZ95, DK21, HM80,  PV07}) or stationary phase arguments (\cite{CCV19}). Very precise asymptotics were obtained in both physical and frequency space. Generally speaking, self-similar profiles have the same behavior as the Airy function (which solves the linear equation), up to some logarithmic corrections. In  physical space, the profiles have weak decay and strong oscillations as $x\to -\infty$.  On Fourier side, a jump discontinuity at the zero frequency appears for $\alpha\neq 0$ and no decay is available for large frequencies (see Proposition \ref{prop:selfsimilar} for a precise description).

\bigskip

As it turns out, self-similar solutions determine the behavior of small solutions for large times. This was first seen by Deift and Zhou  in \cite{DZ93} using inverse scattering techniques, under strong smoothness and decay assumptions. In \cite{HN99,HN01}, the phenomena was proven as a consequence of modified scattering. This was later revisited in \cite{GPR16} and \cite{HaGr16}. On the other hand, self-similarity induces a natural blow-up behavior at $t=0$. This singularity is directly connected to some geometric flows. Indeed, \eqref{mKdV} appears in the modeling of the evolution of the boundary of a vortex patch on the plane subject to Euler's equations (\cite{GP92}) and in the study of vortex filaments in $\R^3$. In these models, self-similar blow-up is connected to the formation of logarithmic spirals (if $\alpha=0$, one observes a sharp corner).

\bigskip

For geometric flows modeled by the cubic nonlinear Schrödinger equation, we advise the reader to look at the series of papers \cite{ BV12, BV13,BV18} and references therein.  Both the cubic (NLS) and the (mKdV) equations are $L^1$-critical. This feature translates a critical polynomial behavior of the nonlinearity at $t=0$. In the (NLS) case, using the pseudo-conformal transformation, one can reduce the self-similar blow-up analysis at $t=0$ to a problem at $t=+\infty$. Furthermore, self-similar solutions are transformed to constants, which is of course a nice simplification. However, for the \eqref{mKdV} equation, no such transformation exists. One must handle the critical behavior and truly understand what happens at the blow-up time.

\bigskip

In a previous work with Luis Vega \cite{CCV20}, we built a critical space on which self-similar solutions naturally exist and proved local and global well-posedness for strictly positive times. This is actually a delicate issue: on one hand, the rough properties of self-similar profiles imply very mild conditions on the functional space. On the other hand,  the loss of derivative in the nonlinear term is very difficult to handle at low regularity (in the context of $H^s$ spaces, one cannot go lower than $s=1/4$). These ingredients had to be carefully balanced in order to achieve a suitable framework on which we could analyze self-similar solutions for large times. This framework will once again play a major role in the analysis at the blow-up time, as we will see later on.

\bigskip

The goal of this work is to give a first step in understanding the \eqref{mKdV} flow near self-similar solutions at time $t=0$. There are two intertwined stability problems which one may consider. The first is to start with a perturbed self-similar solution at time $t=1$ and to study the behavior as $t\to0$. The second, on which we focus here, is to construct a solution $u$ of (mKdV), defined on a small time interval around $t=0$, and such that, given a perturbation $z$,
\[
u(t)-S(t) \to z \quad \text{as } t \to 0 \quad\text{ in some appropriate norm,}
\]
We shall prove that it is possible to construct a such a solution $u$, for a large (open) class of perturbations $z$, thus showing a first result on the stability of self-similar blow-up for \eqref{mKdV}.

\bigskip

This is in the same spirit as \cite{BW98} for the $L^2$-critical (NLS), and \cite{GV13} for the cubic 1D (NLS). However, \eqref{mKdV} self-similar solutions are localized neither in physical nor Fourier space, as opposed to solitons (as in \cite{BW98}), or constant solutions (as in \cite{GV13}). Even further, the $L^1$-criticality of the equation leads to modified scattering, involving logarithmic spirals (see \cite{PV07}). These critical features in both space and time create substantial obstacles in the analysis of the linearized problem around self-similar solutions. To our knowledge, our result is the first to directly construct solutions under such a rough background.
%This work is in the same spirit as \cite{BW98} for the $L^2$-critical (NLS), and \cite{GV13} for the cubic 1D (NLS). Let us however emphasize that we construct solutions on a rough background, in the sense that \eqref{mKdV} self-similar solutions are localized neither in physical nor Fourier space (to the contrary of solitons as in \cite{BW98}, or constant solutions -- after the pseudo-conformal transform -- in \cite{GV13}). As a consequence, very little is understood on the linearized problem (the derivative of the self-similar solution is unbounded as $x\to -\infty$). Furthermore, the $L^1$ criticality of the equation leads to modified scattering, involving logarithmic spirals (see \cite{PV07}). This is very interesting if one thinks of it as a model for turbulent vortex filaments (which is our motivation, as mentioned above), but of course, it increases notably the difficulty of the analysis.
%In a nutshell, we construct a modified wave operator over a rough background, and we believe this is the first result of this kind.

\subsection{Definitions and statement of the main result}\label{sec:statements}

Given a function $v:I\subset \R\to \mathcal{S}'(\R)$, we define the profile 
\begin{align} \label{def:profile}
\tilde{v}(t,\xi ):=e^{it\xi^3}\hat{v}(t,\xi)
\end{align}
(we denote by $\hat\cdot$ or $\q F$ the Fourier transform in the space variable). Observe that, if $v$ is a solution of the Airy equation, then $\tilde{v}$ is constant in time. On the other hand, a self-similar solution $S$ will satisfy (with a slight abuse of notation) $\tilde{S}(t,\xi)=\tilde{S}(t^{1/3}\xi)$.

By canceling the linear evolution, the oscillatory behavior in frequency is completely  concentrated on the nonlinear term: the equation \eqref{mKdV} writes for the profile $\tilde u$
\begin{gather} \label{mkdv:profile}
\partial_t \tilde{u} =  N[\tilde u](t), \quad \text{ where} \\
 N[\tilde u](t) := \frac{i\xi}{4\pi^2}\iint_{\xi_1+\xi_2+\xi_3=\xi} e^{it(\xi^3-\xi_1^3-\xi_2^3-\xi_3^3)}\tilde{u}(t,\xi_1)\tilde{u}(t,\xi_2)\tilde{u}(t,\xi_3)d\xi_1d\xi_2.
\end{gather}
One may use stationary phase arguments (pointwise in time) to extract the main contributions of the nonlinear term. To bound properly the remainder in such an expansion, we define
\[
\|u\|_{\mathcal{E}(t)} := \|\tilde{u}(t)\|_{L^\infty} + t^{-1/6}\|\partial_\xi \tilde{u}(t)\|_{L^2(\R\setminus\{0\})}
\]
and, for any interval $I\subset (0,+\infty)$,
\[
\mathcal{E}(I)=\left\{ u:I\to \mathcal{S}'(\R): \tilde{u}\in \q C(I, L^\infty(\R)),\ \partial_\xi\tilde{u}\in L^\infty(I,L^2(\R\setminus\{0\}))  \right\}
\]
endowed with the norm

\[
\|u\|_{\mathcal{E}(I)}=\sup_{t\in I} \|u(t)\|_{\mathcal{E}(t)}.
\]
\begin{nb}
 By $L^2(\R\setminus\{0\})$, we mean the set of distributions  whose restriction to $\m R\setminus\{0\}$ identifies with an $L^2$ function.
From Sobolev's embedding, a function in $\mathcal{E}$ is $1/2$-Hölder continuous in frequency, with the possible exception of $\xi=0$, where a jump discontinuity may occur. One needs to allow this behavior in order to include self-similar solutions with $\alpha\neq 0$. As one can see in the following proofs, this jump will not introduce any extra difficulty (observe that the zero frequency is preserved by the \eqref{mKdV} flow).
\end{nb}

As it was proven in \cite{CCV20}, the space $\mathcal{E}$ is sufficient to perform the stationary phase analysis (see also \cite{HN99} for a similar development using a slightly stronger norm):

\begin{lem}[The profile equation, {\cite[Lemma 7]{CCV20}}] \label{lem:desenvolveoscilatorio}
	Let $u \in  \mathcal{E}(I)$. For all $t \in I$ and $\xi >0$,
	\begin{gather} \label{eq:expansaou}
	N[\tilde u](t,\xi) =\frac{\pi \xi^3}{\jap{\xi^3t}}\left(i|\tilde{u}(t,\xi)|^2\tilde{u}(t,\xi) - \frac{1}{\sqrt{3}}e^{-8it\xi^3/9}\tilde{u}^3\left(t,\frac{\xi}{3}\right) \right)+ R[u](t,\xi) \\
	\label{eq:N_remainder}\text{with} \qquad |R[u](t,\xi)| \lesssim \frac{\xi^3\|u(t) \|^3_{\q E(t)}}{(\xi^3t)^{5/6}\jap{\xi^3t}^{1/4}}.
	\end{gather}
	Consequently, if $u$ is a distributional solution of \eqref{mKdV} on $I$,
\begin{equation}\label{eq:estut}
\forall t \in I, \quad \| \partial_t \tilde{u}(t)\|_{L^\infty}\lesssim \frac{1}{t}\|u(t)\|_{\mathcal{E}(t)}^3.
\end{equation}
\end{lem}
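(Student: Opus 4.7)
The plan is a pointwise-in-$t$ stationary phase analysis of the trilinear oscillatory integral $N[\tilde u](t,\xi)$. Rescaling $\xi_i = \xi\eta_i$ and setting $\tau = \xi^3 t$ writes it as
$$N[\tilde u](t,\xi) = \frac{i\xi^3}{4\pi^2}\iint e^{i\tau\Psi(\eta_1,\eta_2)}\tilde u(\xi\eta_1)\tilde u(\xi\eta_2)\tilde u(\xi\eta_3)\, d\eta_1\, d\eta_2,$$
with $\eta_3 = 1-\eta_1-\eta_2$ and $\Psi(\eta) = 1-\eta_1^3-\eta_2^3-\eta_3^3$, so that $\tau$ is the semi-classical parameter. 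The critical system $\eta_1^2 = \eta_3^2 = \eta_2^2$ admits four solutions on the constraint surface: the coherent point $\eta = (1/3,1/3)$, where $\Psi = 8/9$ and the Hessian is negative definite with determinant $12$ and signature $-2$; and three resonant saddles at the permutations of $(-1,1,1)$, where $\Psi = 0$ and the Hessian has determinant $-36$ and signature $0$.

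For $\tau \gtrsim 1$, quantitative stationary phase at each critical point produces exactly the two main terms of \eqref{eq:expansaou}. The coherent point yields the $\tilde u(\xi/3)^3$ piece: the phase $e^{\mp 8it\xi^3/9}$ comes from $\Psi(1/3,1/3) = 8/9$, the coefficient $1/\sqrt 3$ from $\sqrt{|\det|} = 2\sqrt 3$, and the sign from $e^{i\pi\sgn/4} = -i$ combined with the $i\xi$ prefactor. Each resonant saddle contributes $|\tilde u(\xi)|^2\tilde u(\xi)$ via reality $\tilde u(-\xi) = \overline{\tilde u(\xi)}$, and summing the three permutations recovers the $i|\tilde u|^2\tilde u$ term. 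In the complementary regime $\tau \lesssim 1$, no oscillation is effective and the two main terms are trivially absorbed into the $1/\jap\tau$ factor (using $\xi^3/\jap\tau \lesssim \xi^3$ and $\|\tilde u\|_{L^\infty} \leq \|u\|_{\q E(t)}$).

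The main obstacle is the sharp remainder bound \eqref{eq:N_remainder}. Since $\partial_\xi\tilde u \in L^2$ only forces $\tilde u$ to be $\tfrac12$-Hölder (by the Sobolev embedding $H^1 \hookrightarrow C^{1/2}$ in one dimension), the amplitude is too rough for a classical smooth stationary phase expansion that would otherwise give the ideal $\tau^{-2}$ decay. The strategy is to localize smoothly around each critical point at a scale $\tau^{-\theta}$: inside the localization, bound by the $L^\infty$ amplitude times the small volume; outside, integrate by parts once along $\nabla\Psi/|\nabla\Psi|^2$, gaining $\tau^{-1}$ at the cost of either a boundary contribution at scale $\tau^{-\theta}$ or a derivative falling on $\tilde u$ — of which only half is available in $L^2$. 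Optimizing $\theta$ against these competing losses produces the rate $(\xi^3 t)^{-5/6}\jap{\xi^3 t}^{-1/4}$, the exponent $5/6 = 1-1/6$ reflecting precisely the half-derivative loss.

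The bound \eqref{eq:estut} then follows at once from the expansion: the prefactor satisfies $\xi^3/\jap{\xi^3 t} \leq 1/t$ uniformly in $\xi$; the cubic main terms are pointwise $\leq \|u\|_{\q E(t)}^3$; and a short case analysis separating $\xi^3 t \leq 1$ from $\xi^3 t \geq 1$ shows that $\xi^3/[(\xi^3 t)^{5/6}\jap{\xi^3 t}^{1/4}] \lesssim 1/t$ uniformly in $\xi$. Taking $L^\infty$ in $\xi$ yields $\|\partial_t\tilde u(t)\|_{L^\infty} = \|N[\tilde u](t)\|_{L^\infty_\xi} \lesssim \|u\|_{\q E(t)}^3/t$.
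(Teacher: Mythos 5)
The paper does not prove this lemma itself: it is a restatement of \cite[Lemma~7]{CCV20}, and the proof (done for the trilinear version) lives in the appendix of that reference, as the text preceding Lemma~\ref{lem:N_E} makes explicit. Your sketch is therefore a blind reconstruction of the \cite{CCV20} argument, and the strategy you propose — semi-classical rescaling to $\tau=\xi^3 t$, stationary phase at the four critical points of $\Psi$ on $\{\eta_1+\eta_2+\eta_3=1\}$, and a localize-then-integrate-by-parts treatment of the remainder adapted to a $C^{1/2}$ amplitude — is the same as what is done there. Your identification of the critical points, the phase values $\Psi(1/3,1/3,1/3)=8/9$ and $\Psi=0$ at the three permutations of $(1,1,-1)$, the Hessian determinants $12$ and $-36$ with signatures $-2$ and $0$, and the merging of the three resonant contributions into $i|\tilde u|^2\tilde u$ via $\tilde u(t,-\xi)=\overline{\tilde u(t,\xi)}$ are all correct. (The sign of the coherent exponent and an overall factor of $4\pi^2$ in the constants are Fourier-convention dependent; your $\mp$ hedging already reflects this and is not essential.) The deduction of \eqref{eq:estut} from the expansion is also correct and complete.

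What is incomplete — and it is the part that makes the lemma non-routine — is the remainder bound \eqref{eq:N_remainder}. You correctly identify the obstruction (only half a derivative of $\tilde u$ is available, in $L^2$), but the derivation of the exponent pair $(5/6,1/4)$ is asserted, not carried out. A proof must, near each critical point: replace $\tilde u(\xi\eta)$ by $\tilde u(\xi\eta_0)$ on an inner ball of radius $r$ and estimate the error through the $1/2$-H\"older modulus $(\xi r)^{1/2}\|\partial_\xi\tilde u\|_{L^2}$ — a crude $L^\infty$-times-volume bound would not subtract the leading term; on the outer region, perform one integration by parts with the derivative absorbed by Cauchy--Schwarz against $\|\partial_\xi\tilde u\|_{L^2}$, paying the square root of the measure and controlling the boundary terms at scale $r$; handle separately the definite and indefinite Hessian types and the non-oscillatory regime $\tau\lesssim 1$; and finally optimize $r$ against $\tau$, with the $t^{-1/6}$ normalization of $\|\partial_\xi\tilde u\|_{L^2}$ in $\|\cdot\|_{\mathcal{E}(t)}$ feeding directly into the $(\xi^3 t)^{-5/6}$ factor. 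Your paragraph names the phenomenon but leaves all of this to the reader, so the centerpiece estimate remains unproved in the proposal.
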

One of the main observations in \cite{CCV20} is that the $\mathcal{E}$ norm is enough to bound both the nonlinear term and self-similar solutions:

\begin{prop}[Existence of self-similar solutions, {\cite[Theorem 1]{CCV19}}] \label{prop:selfsimilar}
	Given $c,\alpha\in\R$ sufficiently small, there exists a unique self-similar solution $S\in \mathcal{E}((0,+\infty))$ with
	\begin{gather*}
	\|S(t)\|_{\mathcal{E}(t)}=\|S(1)\|_{\mathcal{E}(1)}\lesssim c^2+\alpha^2 \qquad \text{for all } t >0, and\\
	S(t) \weak c\delta_{x=0}+ \alpha \mathop{\mathrm{p.v.}} \left( \frac{1}{x} \right) \quad \text{in } \mathcal{D}'(\R) \text{ as } t \to 0.
	\end{gather*}
	Furthermore, there exist $A, B\in \complex$ such that
	\begin{gather*}
\tilde{S}(t^{1/3}\xi) \sim \begin{cases}
	 A e^{i a \ln |t^{1/3}\xi|} + B \frac{e^{ia \ln |t\xi^3| -i \frac{8}{9} t\xi^3}}{t\xi^3},  &|t\xi^3|\gg1,\\c + \frac{3 i \alpha}{2\pi} \sgn(t\xi^3), & |t\xi^3|\ll1.
	\end{cases}
		\end{gather*}

\end{prop}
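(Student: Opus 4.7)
The plan is to exploit the self-similar ansatz $\tilde{S}(t,\xi) = F(t^{1/3}\xi)$ in order to collapse the profile equation $\partial_t \tilde{S} = N[\tilde{S}]$ to a stationary equation for a single function $F\colon\mathbb{R}\to\mathbb{C}$. Rescaling the internal frequencies by $\eta_j = t^{1/3}\xi_j$ inside the trilinear integral defining $N$ gives $N[\tilde{S}](t,\xi) = t^{-1} M[F](t^{1/3}\xi)$ for an explicit time-independent trilinear functional $M$, while the time derivative reads $\partial_t \tilde{S}(t,\xi) = \frac{\eta}{3t} F'(\eta)$ with $\eta = t^{1/3}\xi$. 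The equation therefore reduces to
\[
\frac{\eta}{3} F'(\eta) = M[F](\eta), \qquad \eta \in \mathbb{R}\setminus\{0\},
\]
which by Lemma~\ref{lem:desenvolveoscilatorio} I would rewrite as an explicit leading ODE with a cubic source plus a controllable remainder.

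The prescribed weak limit $c\delta_{0} + \alpha\,\mathop{\mathrm{p.v.}}(1/x)$ translates on the Fourier side to $F(0^\pm) = c \mp i\pi\alpha$ (the jump accounting for the principal value, up to the normalization yielding $\frac{3i\alpha}{2\pi}\mathrm{sgn}$). I would decompose $F = F_\sharp + G$, where $F_\sharp$ is a fixed smooth function carrying the prescribed boundary values at $\eta\to 0^\pm$ and decaying at infinity, and look for a correction $G$ continuous at zero and vanishing at the origin. By self-similarity it suffices to perform the fixed point at $t=1$: the norm $\|F\|_{L^\infty} + \|F'\|_{L^2(\mathbb{R}\setminus\{0\})}$ coincides (up to a universal constant) with $\|S(t)\|_{\mathcal{E}(t)}$ for every $t > 0$. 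Lemma~\ref{lem:desenvolveoscilatorio} yields the cubic bound $\|M[F]\|_{\mathcal{E}} \lesssim \|F\|_{\mathcal{E}}^3$, so for small $(c,\alpha)$ the integrated iteration $G \mapsto \int_0^\cdot \frac{3}{\eta}M[F_\sharp + G](\eta)\,d\eta$ is a contraction on a small ball in the $\mathcal{E}$ norm, and extracting the purely linear contribution of $F_\sharp$ one recovers the sharper size $\|G\|_{\mathcal{E}} \lesssim c^2 + \alpha^2$ stated in the proposition.

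The asymptotics at $|\eta|\gg 1$ come from a WKB-type analysis of the reduced equation: inserting the two-term ansatz $F(\eta) \sim A e^{ia\ln|\eta|} + B\, \eta^{-3} e^{ia\ln|\eta^3| - \frac{8i}{9}\eta^3}$ into the two stationary-phase contributions singled out by Lemma~\ref{lem:desenvolveoscilatorio}, the diagonal term $i|F|^2 F$ produces the logarithmic phase $e^{ia\ln|\eta|}$ (the signature of $L^1$-critical modified scattering, with $a$ determined by $|A|^2$), while the oscillatory term $e^{-8i\eta^3/9} F(\eta/3)^3$ forces the $B$-contribution at the conjugate rate. The main obstacle throughout is the critical interplay between the loss of derivative in the trilinear convolution $M[F]$ and the jump discontinuity of $F$ at $\eta = 0$: the fixed point must be carried out in $L^2(\mathbb{R}\setminus\{0\})$ rather than $L^2(\mathbb{R})$, and one must verify that the prescribed jump is exactly preserved (and not smeared) by the nonlinear iteration, which is precisely the feature for which the $\mathcal{E}$ norm has been designed.
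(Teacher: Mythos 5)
The paper does not prove this proposition: it is quoted from the companion paper \cite{CCV19} (Theorem 1) and used here as a black box, so your sketch is a from-scratch attempt rather than a reconstruction of an argument in this paper. Your general strategy — substituting $\tilde S(t,\xi)=F(t^{1/3}\xi)$ so that the profile equation collapses to the time-independent equation $\tfrac{\eta}{3}F'(\eta)=M[F](\eta)$, then running a fixed point on the Fourier side using the stationary-phase expansion — is indeed close in spirit to what \cite{CCV19} does (the introduction explicitly attributes the stationary-phase route to \cite{CCV19}, in contrast to the physical-space ODE analyses of the other cited works). The scaling computation $N[\tilde S](t,\xi)=t^{-1}M[F](t^{1/3}\xi)$ is correct.

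There is, however, a genuine gap in the fixed-point step as you describe it. The iteration $G\mapsto\int_0^{\eta}\tfrac{3}{\eta'}M[F_\sharp+G](\eta')\,d\eta'$ has, for $|\eta|\gg1$, a tail of size $\int^{\eta}\tfrac{3\pi i}{\eta'}|F|^2F\,d\eta'$, which diverges logarithmically; the naive map therefore does not send a small $L^\infty$-ball into itself, and a bare contraction in the norm $\|F\|_{L^\infty}+\|F'\|_{L^2(\R\setminus\{0\})}$ will not close. One must first factor out the rotating phase, writing $F(\eta)=e^{ia\ln|\eta|}H(\eta)$ with $a$ tied to $|A|^2$, and exploit that the resonant term $i|F|^2F$ in \eqref{eq:expansaou} is purely phase-rotating (its leading contribution to $\partial_\eta|F|^2$ is purely imaginary, so the modulus is conserved at leading order). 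Only the renormalised equation for $H$ has an integrable source at infinity, and this renormalisation is precisely the modified-scattering content of the result; your sketch states the WKB conclusion but hides the cancellation that makes the fixed point convergent. Separately, the size you attribute to $G$, namely $\lesssim c^2+\alpha^2$, is stated in the proposition for $\|S\|_{\mathcal{E}}$ itself; since the weak limit forces $\|\tilde S\|_{L^\infty}\gtrsim |c|+|\alpha|$, the printed bound must be read as $\sqrt{c^2+\alpha^2}$, and a genuinely quadratic bound on $S$ is not available — only the nonlinear correction $G$ is quadratic, which your decomposition correctly reflects but the proposition's display, taken literally, does not.
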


\begin{nb}
	The norm of self-similar solutions is preserved due to the norm $\|\cdot\|_{\mathcal{E}(t)}$ being scale-invariant.
\end{nb}
%
%\begin{nb}
%	It is unclear whether the self-similar solutions obtained in \cite{DZ95, HM80, GV09} also enjoy the finiteness of the $\mathcal{E}$ norm. Our work is only valid for small self-similar solutions belonging to $\mathcal{E}$.
%\end{nb}

 Our goal in this paper is to construct solutions $u$ to \eqref{mKdV} which blow up at time $0$ as the sum of such a self-similar solution $S$ and a prescribed (more regular) perturbation.

Let us outline the scheme to derive a precise statement and its proof.
Using estimate \eqref{eq:estut}, one can hope to bootstrap the $L^\infty$ norm of $\tilde{u}$. In order to control the $\mathcal{E}$ norm, we need another key ingredient: the scaling operator $Iu$, formally defined as 
\[
Iu := xu+3t\int_{-\infty}^x \partial_tu dx',
\]
or equivalently in Fourier variable:
\begin{equation} \label{def:I}
\widehat{Iu}(t,\xi) := i \partial_\xi \hat u - \frac{3it}{\xi} \partial_t \hat u.
\end{equation}
As it can be seen in Fourier variables, the $L^2$ norm of $Iu$ is intimately related to that of  $\partial_\xi \tilde{u}$.
A direct computation yields
\begin{equation}
(\partial_t + \partial_x^3)Iu = 3u^2(Iu)_x
\end{equation}
and thus
\begin{equation}\label{eq:Iu}
\frac{d}{dt}\|Iu(t)\|_{L^2}^2\lesssim \left|\int u^2(Iu)_xIu dx\right|\lesssim \left|\int uu_x(Iu)^2 dx\right|\lesssim \frac{\|u\|_{\mathcal{E}(t)}^2}{t} \|Iu(t)\|_{L^2}^2.
\end{equation}
(recall \cite[Lemma 6]{CCV20}).
As it is clear from \eqref{eq:estut} and \eqref{eq:Iu}, the problem is marginally singular at $t=0$. This should not come as a surprise, due to the $L^1$-critical nature of the \eqref{mKdV} equation. For positive times away from $t=0$, these estimates are sufficient to construct a solution over the space $\mathcal{E}$ (see \cite{CCV20}). To explain how to  improve the behavior at $t=0$, let us look closely to \eqref{eq:expansaou} and forget the $R$ term. If, for some reason, one had $|\tilde{u}(t,\xi)|\lesssim \jap{\xi}^{-\epsilon}$ for some $\epsilon >0$, then
\[
|\partial_t \tilde{u} (t,\xi)|\lesssim \frac{\xi^{3-\epsilon}}{\jap{\xi^3t}}\sup_t\left\{\|\tilde{u}(t)\|_{L^\infty}^2\|\jap{\cdot}^\epsilon\tilde{u}(t)\|_{L^\infty}\right\} \lesssim \frac{1}{t^{1-\epsilon/3}},
\]
which can now be integrated in $(0,t)$ to produce an $L^\infty$ bound on $\tilde{u}$. There are two problems with this approach: first, as one may expect, self-similar solutions do not enjoy any extra decay in $\xi$; second, an \textit{a priori} bound for the extra decay would have to go through the profile equation, where finds once again the $1/t$ behavior at $t=0$. On the other hand, if one had $|\tilde{u}(t,\xi)|\lesssim t^{\epsilon}$, then
\[
|\partial_t \tilde{u} (t,\xi)|\lesssim \frac{\xi^{3}t^{\epsilon}}{\jap{\xi^3t}}\sup_t\left\{t^{-\epsilon}\|\tilde{u}(t)\|_{L^\infty}^3\right\} \lesssim \frac{1}{t^{1-\epsilon}},
\]
and the integration becomes possible on $(0,t)$. Unfortunately, this assumption is even more problematic, since it implies that $\tilde{u}(0,\xi)\equiv0$. It becomes clear that an extra decay in either frequency or time would suffice to derive an $L^\infty_\xi$ bound. The key idea is to decompose $u$ as 
\begin{align} \label{eq:decomp_u_Szw}
\tilde{u}(t,\xi)=\tilde{S}(t,\xi) + \tilde{z}(t,\xi) + \tilde{w}(t,\xi),
\end{align}
where $z$ has extra smoothness and we aim at bootstrapping information on $\|w\|_{\q E}$.

The self-similar solution, despite its singular behavior, is an exact solution with precise asymptotics in both space and frequency. The regular term $\tilde{z}$ can be chosen sufficiently smooth in space and frequency: in fact, as no polynomial bound in time is necessary, we will assume $\tilde{z}$ constant in time (that is, it corresponds to the linear evolution of the perturbation). The remainder term $\tilde{w}$ will satisfy a bound $\|w(t)\|_{\mathcal{E}(t)}\lesssim t^\epsilon$ and it will measure the interaction between the self-similar solution and the localized linear solution. The equation for the remainder $w$ is
\begin{equation}\label{eq:w}
\partial_t w +\partial_{xxx} w = \partial_x (u^3 - S^3), \quad w(0)=0.
\end{equation}
Observe that, since the evolution of the regular part $z$ is linear, no \textit{a priori} decay and smoothness estimates are necessary. The problem is completely reduced to the existence of $w$ over $\mathcal{E}$ with a polynomial bound in time. From the above discussion, the $L^\infty$ bound on $\tilde{w}$ should hold and we are left with the \textit{a priori} bound on $Iw$, for which the equation is
\[
(\partial_t + \partial_x^3)Iw = 3\left(u^2(Iu)_x-S^2(IS)_x\right).
\]
It is at this point that another decisive feature is revealed: due to the self-similar nature of $S$, $(IS)_x \equiv 0$. Thus
\[
(\partial_t + \partial_x^3)Iw = 3u^2(Ie^{-t\partial_x^3}z)_x + 3u^2(Iw)_x
\]
A direct integration yields
\begin{align*}
\frac{d}{dt}\|Iw\|_{L^2}^2 & \lesssim \left|\int  u^2(Ie^{-t\partial_x^3}z)_xIw dx \right| + \left|\int  u^2(Iw)_xIw dx\right| \\
& \lesssim \frac{\|u\|_{\mathcal{E}(t)}^2}{t^{2/3}}\|(Ie^{-t\partial_x^3}z)_x\|_{L^2}\|Iw\|_{L^2} + \frac{\|u\|_{\mathcal{E}(t)}^2}{t}\|Iw\|_{L^2}^2.
\end{align*}
Since $\q F (Ie^{-t\partial_x^3}z)_x = \partial_\xi \hat z$, the factor $\|(Ie^{-t\partial_x^3}z)_x\|_{L^2}$ causes no further singular behavior at $t=0$. As $Iw\equiv0$ at $t=0$, this inequality can now be integrated to produce a polynomial bound on $Iw$. Here we see the importance of $I$: it provides essential \textit{a priori} bounds while completely canceling out the self-similar background.

	The decomposition \eqref{eq:decomp_u_Szw} of $u$ is quite natural. If $S\equiv 0$, then $w$ is just the Duhamel integral term, for which one may indeed expect a polynomial bound by applying the $H^s$ local well-posedness theory. The point of this work is that self-similar solutions do not disrupt the classical theory, even though they do not belong to the usual spaces involved in the Cauchy problem. A solution with a self-similar background can still be obtained as a perturbation of the linear flow. 

\begin{nb}
	Speaking loosely, self-similar solutions appear from the underlying structure of the \textit{equation} and not from any specific balance between nonlinearity and dispersion (as it is for solitons). Their blow-up behavior is caused by the equation itself. Being unavoidable, it is should also be stable. This is in strong contrast with soliton-related blow-up, where the singularity comes from the precise structure of the \textit{solution}. There, small perturbations may obviously lead to strong unstable behavior.
\end{nb}

\bigskip

We now state the main result of this paper. Define the space of admissible perturbations
\begin{equation}
\mathcal{Z}:=\left\{z\in \mathcal{S}'(\R): \vvvert z \vvvert:=\|z\|_{L^1} + \|\jap{\xi}^{2}\hat{z}\|_{L^1} + \|\jap{\xi}\partial_\xi \hat{z}\|_{L^1} < +\infty \right\}.
\end{equation}
Here and below, $\jap{x} := \sqrt{1+|x|^2}$ stands for the japanese bracket.

\begin{thm}[Stability of self-similar solutions at blow-up time]\label{teo:existw}
	There exists $\delta>0$ such that, given $z\in \mathcal{Z}$ and a self-similar solution $S \in \mathcal{E}((0,+\infty))$ with 
	\begin{equation}\label{eq:small}
	\nz{z}+\|S\|_{\mathcal{E}((0,+\infty))}<\delta,
	\end{equation} 
	there exists a unique $w\in \mathcal{E}((0,+\infty)) \cap L^\infty(\R^+, L^2(\R))$ distributional solution to \eqref{eq:w} satisfying
\begin{equation}\label{eq:estfinal}
	\forall t >0, \quad \|w(t)\|_{\mathcal{E}(t)}\le \delta t^{1/9},\quad \|w(t)\|_{L^2(\R)}\le \delta^2 t^{1/18}.
\end{equation}
	In particular, $u(t)=S+e^{-t\partial_x^3}z+w$ is a distributional solution of \eqref{mKdV} on $\R^+\times \R$ satisfying
	\[
	\begin{cases}
u(t)-S(t) \to z \quad \text{in } L^2(\R)\\
\hat{u}(t)-\hat{S}(t) \to \hat{z} \quad \text{in } L^\infty(\R)
	\end{cases} \text{ as} \quad t \to 0^+. 
	\]
\end{thm}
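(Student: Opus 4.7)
The strategy is to construct $w$ by a fixed-point argument on a small interval $(0, T_0]$ and to extend the solution to all $t > 0$ using the positive-time Cauchy theory of \cite{CCV20}. Set $u := S + e^{-t\partial_x^3}z + w$; the linear evolution has constant profile $\tilde z(t,\xi) = \hat z(\xi)$, and the norm $\nz{z}$ controls both $\|\hat z\|_{L^\infty}$ and $\|\partial_\xi \hat z\|_{L^2}$, so $S + e^{-t\partial_x^3}z \in \mathcal{E}((0,\infty))$ with norm $\lesssim \delta$. I will seek a fixed point of the Duhamel map
\[
\Phi(w)(t) := \int_0^t e^{-(t-s)\partial_x^3}\partial_x(u^3 - S^3)(s)\,ds
\]
in the complete metric space
\[
\mathcal B_{T_0} := \bigl\{w \in \mathcal E((0, T_0]) \cap L^\infty((0, T_0], L^2):\ \|w(t)\|_{\mathcal E(t)} \le \delta t^{1/9},\ \|w(t)\|_{L^2}\le \delta^2 t^{1/18}\bigr\}.
\]

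The first a priori estimate bootstraps $\|\tilde w(t)\|_{L^\infty}$ via the profile equation. Since $\partial_t \tilde w = N[\tilde u] - N[\tilde S]$, subtracting the expansions of Lemma~\ref{lem:desenvolveoscilatorio} for $u$ and for $S$ term by term, the resonant cubic structures share the same algebraic form and combine into a polynomial in $\tilde u, \tilde S$ that is (to lowest order) linear in the perturbation $\tilde z + \tilde w$, plus the difference $R[u] - R[S]$. The resulting $1/t$-singular resonant contribution is compensated by the smallness $\|\tilde z\|_{L^\infty} + \|\tilde w(t)\|_{L^\infty} \lesssim \nz{z} + \delta t^{1/9}$, while the non-resonant oscillatory term in $e^{-8it\xi^3/9}$ is treated by a non-stationary phase argument in time; combined with the $t^{-5/6}$ estimate for $R$ from \eqref{eq:N_remainder}, integration from $0$ yields $\|\tilde w(t)\|_{L^\infty} \lesssim \delta(\nz{z} + \delta)\, t^{1/9}$, closing this part of the bootstrap for $\delta$ small.

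The second a priori estimate controls $\partial_\xi \tilde w$ via the operator $I$. The self-similar invariance of $S$ yields $(IS)_x \equiv 0$, and since $e^{-t\partial_x^3}z$ solves the Airy equation, the equation for $Iw$ reduces to
\[
(\partial_t + \partial_x^3) Iw = 3u^2 (Iw)_x + 3u^2 \partial_x(I e^{-t\partial_x^3}z).
\]
The forcing involves $\partial_\xi \hat z$, which belongs to $L^2$ by the assumption $\jap{\xi}\partial_\xi \hat z \in L^1$, while the quasilinear term is tamed via the smoothing inequality $\|u u_x\|_{L^\infty} \lesssim \|u\|_{\mathcal E}^2 / t$ from \cite[Lemma 6]{CCV20} together with integration by parts. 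A Gronwall-type argument on the resulting differential inequality, using $Iw(0) = 0$, yields $\|Iw(t)\|_{L^2}\lesssim \delta \nz{z}\, t^{1/3}$; recovering $\|\partial_\xi \tilde w\|_{L^2(\R\setminus\{0\})}$ from \eqref{def:I} and using \eqref{eq:estut} to handle the residual $(t/\xi)\partial_t \hat w$ contribution, one obtains $t^{-1/6}\|\partial_\xi \tilde w\|_{L^2(\R\setminus\{0\})} \lesssim \delta t^{1/9}$. A direct $L^2$ energy estimate on \eqref{eq:w}, now that the $\mathcal E$ bound on $w$ is in hand, finally produces $\|w(t)\|_{L^2}\lesssim \delta^2 t^{1/18}$.

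Running the same estimates on the difference of two candidate solutions shows that $\Phi$ is a contraction on $\mathcal B_{T_0}$ for $\delta$ and $T_0$ small, yielding a unique fixed point, and the solution is extended to all $t > 0$ via the Cauchy theory of \cite{CCV20}. The convergences $u(t) - S(t)\to z$ in $L^2$ and $\hat u(t) - \hat S(t)\to \hat z$ in $L^\infty$ follow directly from \eqref{eq:estfinal} as $t \to 0^+$. The hardest part will be the precise analysis of $N[\tilde u] - N[\tilde S]$ at the level of the profile: taming the $1/t$ resonant singularity requires combining the smallness of $\tilde z + \tilde w$ with the oscillation-induced decay from the non-resonant phase $e^{-8it\xi^3/9}$ and the stationary-phase remainder bound on $R$, and the exponent $1/9$ in \eqref{eq:estfinal} reflects the delicate balance between these three contributions.
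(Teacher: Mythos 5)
Your general architecture (decompose $u = S + e^{-t\partial_x^3}z + w$, bootstrap $\|\tilde w\|_{L^\infty}$ via the profile equation, control $\partial_\xi\tilde w$ via the scaling operator $I$ using $(IS)_x\equiv 0$, close with an $L^2$ energy estimate, and extend by the positive-time Cauchy theory) matches the paper. The key property $(IS)_x\equiv 0$ and the structure of the $Iw$ equation are correctly identified. However, there are two genuine gaps.

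First, and most seriously, your $L^\infty$ bootstrap does not close as stated. Subtracting the resonant expansions for $N[\tilde u]$ and $N[\tilde S]$ produces, among others, the bilinear-in-$S$, linear-in-$z$ contribution whose size is $\sim \frac{\xi^3}{\jap{t\xi^3}}|\tilde S|^2|\hat z| \sim \frac{1}{t}\|S\|_{\mathcal E}^2\nz{z}$ for $t\xi^3\gg 1$. This term has no $t$-power gain, no factor of $\tilde w$, and no non-resonant phase $e^{-8it\xi^3/9}$ to exploit. \emph{Smallness} of $z$ cannot make $1/t$ integrable near $t=0$; the needed integrability must come from a genuine power of $t$. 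The paper's resolution is Lemma~\ref{lem:termos_com_z}: using the additional \emph{weighted regularity} of $z$ (the $\jap{\xi}^2\hat z\in L^1$ and $\jap{\xi}\partial_\xi\hat z\in L^1$ hypotheses in $\mathcal Z$), a multi-region stationary phase analysis in the spatial frequencies improves the bound on $N[\tilde z,\tilde v,\tilde v]$ from $1/t$ to $1/t^{8/9}$. That factor $t^{1/9}$ is precisely what appears in \eqref{eq:estfinal}; your sketch never produces it. A related gap affects your $L^2$ energy estimate: the term $\int(u^3-S^3)\partial_x w$ contains pieces where the derivative falls on $w$ and cannot be rewritten as a total derivative; handling them requires the nontrivial pointwise bound of Lemma~\ref{lem:estL2}.

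Second, a direct contraction on $\mathcal B_{T_0}\subset\mathcal E$ is not justified because of the derivative loss in $\partial_x(u^3)$ at the regularity level of $\mathcal E$, and because the energy computation for $Iw$ requires enough regularity to make sense of the quasilinear term $u^2(Iw)_x$ and the integrations by parts. The paper circumvents both issues by first solving the frequency-truncated problem in $X_n$ (Proposition~\ref{lem:existwn}), where the cutoff $\chi_n$ absorbs the derivative loss and justifies the formal manipulations, then establishing \emph{uniform} a priori bounds (Proposition~\ref{prop:aprioriI}) and passing to the limit via Ascoli--Arzelà; uniqueness is obtained separately by an $L^2$ energy argument on the difference with a prescribed polynomial rate. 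Your proposal asserts the contraction without addressing either obstruction.
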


\begin{nb}
	From time reversibility, one may solve the problem for negative times and glue the solutions together. Thus one may actually go \textit{beyond} the blow-up time. After some careful considerations, this is not that surprising: over $\mathcal{E}$, the self-similar solution does not present any sort of blow-up behavior at $t=0$. 
\end{nb}
In order to prove this result, we first need to understand how the various components of $u$ interact in the nonlinear term. This is done in Section \ref{sec:estimates}. Afterwards, in Section \ref{sec:aprox}, we construct an approximation sequence by cutting off high frequencies (Proposition \ref{lem:existwn}) and prove the necessary \textit{a priori} bounds in $\mathcal{E}$ through a careful bootstrap argument (Proposition \ref{prop:aprioriI}). Finally, in Section \ref{sec:proofthm}, the limiting procedure yields the claimed solution on a small time interval, which can then be extended for all positive times using the global results of \cite{CCV20}. The uniqueness statement follows from a direct energy argument (Proposition \ref{prop:uniq}).

%
%\begin{nb}
%	In our proof, we do not exploit thoroughly the integration in time to gain regularity, relying only on estimates which are pointwise in time. Moreover, we are using information from both the Fourier side (for the $L^\infty$ bound of $\tilde{w}$) and the physical side (for the estimate of $Iw$). As a consequence, the class of admissible perturbations is somewhat restrictive and quite far from the conditions of the space $\mathcal{E}$. In an upcoming paper, we will present another method to prove stability of self-similar blow-up which will yield more precise results, at the cost of much more involved computations.
%\end{nb}

\subsection{Acknowledgements}
We would like to thank Luis Vega for his encouragement and insightful remarks. S.C. was partially supported by Funda\c{c}\~ao para a Ci\^encia e Tecnologia, through CAMGSD, IST-ID
(projects UIDB/04459/2020 and UIDP/04459/2020) and through the project NoDES (PTDC/MAT-PUR/1788/2020).

\section{Linear and multilinear estimates}\label{sec:estimates}

In the following, the variables $\xi$, $\xi_1$, $\xi_2$ and $\xi_3$ are linked via the relation
\[ \xi = \xi_1 + \xi_2 + \xi_3. \]
We will perform a stationary phase analysis, with the phase
\begin{align} \label{def:Phi}
\Phi = \Phi(\xi,\xi_1,\xi_2) := \xi^3 - (\xi_1^3 + \xi_2^3+ \xi_3^3) = 3 (\xi-\xi_1) (\xi-\xi_2)(\xi-\xi_3).
\end{align}

Consider the trilinear version of $N$ defined by
\[ N[\tilde f, \tilde g, \tilde h](t,\xi) : = \frac{i \xi}{4\pi^2} \iint_{\xi_1+\xi_2+\xi_3=\xi} e^{it\Phi}\tilde{f}(t,\xi_1) \tilde{g}(t,\xi_2)\tilde{h}(t,\xi_3)d\xi_1 d\xi_2. \]

We state a trilinear version of Lemma \ref{lem:desenvolveoscilatorio}.

\begin{lem}[$L^\infty$ bounds in $\q E$] \label{lem:N_E}
For any $t >0$ and $f,g,h \in \q E(t)$,
\begin{equation} \label{est:N_E}
 |N[\tilde f, \tilde g, \tilde h](t,\xi) | \lesssim \frac{1}{t} \| f \|_{\q E(t)} \| g \|_{\q E(t)} \| h \|_{\q E(t)} .
\end{equation}
\end{lem}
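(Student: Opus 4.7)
The plan is to reduce the trilinear bound to the diagonal case already contained in Lemma~\ref{lem:desenvolveoscilatorio}, by a short polarization-plus-homogeneity argument. The first step is to convert the expansion \eqref{eq:expansaou}--\eqref{eq:N_remainder} into the crude pointwise estimate
\[
|N[\tilde v](t,\xi)| \lesssim \frac{1}{t}\|v\|_{\mathcal{E}(t)}^{3}, \qquad t>0,\ \xi > 0.
\]
The main term is controlled by $\frac{\xi^3}{\jap{\xi^3 t}}\|\tilde v\|_{L^\infty}^{3}$, and $\xi^3/\jap{\xi^3 t} \lesssim 1/t$ uniformly (distinguish $\xi^3 t \le 1$, where $\xi^3 \le 1/t$, and $\xi^3 t \ge 1$, where $\jap{\xi^3 t} \gtrsim \xi^3 t$). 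For the remainder, setting $\tau = \xi^3 t$,
\[
\frac{\xi^3}{(\xi^3 t)^{5/6}\jap{\xi^3 t}^{1/4}} = \frac{1}{t}\cdot\frac{\tau^{1/6}}{\jap{\tau}^{1/4}} \lesssim \frac{1}{t},
\]
since $\tau^{1/6}\le \jap{\tau}^{1/4}$ for all $\tau \ge 0$. For $\xi \le 0$ the same bound follows from the symmetry $(\xi,\xi_1,\xi_2)\mapsto(-\xi,-\xi_1,-\xi_2)$ of the phase $\Phi$ together with complex conjugation, which preserves the $\mathcal{E}(t)$ norm.

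Next I would observe that $T(f,g,h) := N[\tilde f, \tilde g, \tilde h]$ is a fully symmetric trilinear form. Using $\xi_3 = \xi-\xi_1-\xi_2$, the swap $\xi_1\leftrightarrow\xi_2$ preserves both $\Phi$ and the measure and permutes $\tilde f,\tilde g$; the change of variable $\xi_1\mapsto \xi-\xi_1-\xi_2$ (at fixed $\xi_2$, unit Jacobian) permutes $\tilde f,\tilde h$ and again leaves $\Phi$ invariant. Setting $Q(v):= T(v,v,v) = N[\tilde v]$, the standard polarization identity for symmetric trilinear forms reads
\begin{equation}
6\, T(f,g,h) = Q(f+g+h) - Q(f+g) - Q(f+h) - Q(g+h) + Q(f) + Q(g) + Q(h).
\end{equation}
Combining with the diagonal bound from the first step and the triangle inequality for $\|\cdot\|_{\mathcal{E}(t)}$ yields
\[
|N[\tilde f,\tilde g,\tilde h](t,\xi)| \lesssim \frac{1}{t}\bigl(\|f\|_{\mathcal{E}(t)} + \|g\|_{\mathcal{E}(t)} + \|h\|_{\mathcal{E}(t)}\bigr)^{3}.
\]
The product form \eqref{est:N_E} then follows by rescaling $(f,g,h)\mapsto\bigl(f/\|f\|_{\mathcal{E}(t)},\,g/\|g\|_{\mathcal{E}(t)},\,h/\|h\|_{\mathcal{E}(t)}\bigr)$ and exploiting the trilinearity of $N$.

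No significant difficulty is anticipated: the only slightly tedious point is the elementary weight manipulation in the first step, which is essentially implicit in \cite{CCV20}. An alternative, equally valid approach is to redo the stationary-phase derivation of \eqref{eq:expansaou} with three distinct profiles $\tilde f, \tilde g, \tilde h$ in place of $\tilde u$, which amounts to replacing cubic powers of a single norm by the corresponding product at each occurrence and would directly yield the product bound; I prefer polarization here purely for brevity.
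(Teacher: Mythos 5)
Your proof is correct, and it takes the approach the paper itself names first: its one-line proof says ``this can be derived from polarizing \eqref{eq:expansaou}--\eqref{eq:N_remainder}'' before noting that the appendix of \cite{CCV20} already carries out the stationary-phase analysis for the trilinear form, which is the more direct route you mention at the end. All the steps check: the reduction of \eqref{eq:expansaou}--\eqref{eq:N_remainder} to the pointwise diagonal bound (using $\xi^3/\jap{\xi^3 t}\lesssim 1/t$ and $\tau^{1/6}\le\jap{\tau}^{1/4}$), the reflection-and-conjugation argument to cover $\xi<0$, the verification of full symmetry of $T$ (Fubini swap of $\xi_1,\xi_2$, unimodular substitution $\xi_1\mapsto\xi_3$), the seven-term polarization identity, and the rescaling to convert the sum-cubed bound into the product bound. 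One small remark: the resonant term $i|\tilde u|^2\tilde u$ in \eqref{eq:expansaou} implicitly uses the reality constraint $\tilde u(-\xi)=\overline{\tilde u(\xi)}$; for unconstrained complex profiles it would read $\tilde u(-\xi)\tilde u(\xi)^2$ instead, which changes nothing for the $L^\infty$ bound or for the polarization argument (sums of constrained profiles remain constrained), so this affects neither the validity nor the scope as used in the paper.
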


\begin{proof}
This can be derived from polarizing \eqref{eq:expansaou}-\eqref{eq:N_remainder}. We refer to \cite[Lemma 7]{CCV20}: actually its proof (in the appendix there) is done for the trilinear version $N[\tilde f, \tilde g, \tilde h]$, and gives in particular \eqref{est:N_E}.
\end{proof}

The $1/t$ decay in \eqref{est:N_E} cannot be improved, in view of the leading terms in \eqref{eq:expansaou}. However, if one of the functions involved is better behaved, namely belongs to $\q Z$, we can gain some decay in time. This our next result.

\begin{lem}[$L^\infty$ bounds on terms with $z$]\label{lem:termos_com_z}
	For any $0<t \le 1$, $z\in \mathcal{Z}$ and $v\in \mathcal{E}(t)$, one has
	\begin{align}
	\label{eq:est_SSz}
	\left\| N[\tilde z, \tilde v, \tilde v](t) \right\|_{L^\infty_\xi} & \lesssim \frac{1}{t^{8/9}} \vvvert z \vvvert \|v\|_{\mathcal{E}(t)}^2, \\
\label{eq:est_Szz}
	\left\| N[\tilde z, \tilde z, \tilde v](t)  \right\|_{L^\infty_\xi} & \lesssim \frac{1}{t^{2/3}} \vvvert z \vvvert^2\|v\|_{\mathcal{E}(t)}, \\
	\label{eq:est_zzz}
	\left\| N[\tilde z, \tilde z, \tilde z](t)   \right\|_{L^\infty_\xi} & \lesssim  \vvvert z \vvvert^3.
	\end{align}
\end{lem}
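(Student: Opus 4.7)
For \eqref{eq:est_zzz}, I would work directly in physical space. Setting $Z_t := e^{-t\partial_x^3}z$ the linear Airy evolution of $z$, a direct Fourier computation (using the profile convention under which Airy solutions have constant profile) gives
\[
N[\tilde z, \tilde z, \tilde z](t, \xi) = \frac{e^{it\xi^3}}{4\pi^2}\, \widehat{\partial_x(Z_t^3)}(\xi),
\]
so by Hausdorff--Young and the Leibniz rule,
\[
\|N[\tilde z, \tilde z, \tilde z](t)\|_{L^\infty_\xi} \lesssim \|\partial_x(Z_t^3)\|_{L^1_x} \lesssim \|Z_t\|_{L^2}^2\, \|\partial_x Z_t\|_{L^\infty}.
\]
Each factor is $\lesssim \nz{z}$: the Airy flow preserves $L^2$ so $\|Z_t\|_{L^2} = \|z\|_{L^2} \lesssim \sqrt{\|\hat z\|_{L^1}\|\hat z\|_{L^\infty}} \lesssim \nz{z}$, and $\|\partial_x Z_t\|_{L^\infty} \le \|\xi\hat z\|_{L^1} \lesssim \|\jap{\xi}^2 \hat z\|_{L^1} \le \nz{z}$. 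No time dependence appears.

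For \eqref{eq:est_Szz} and \eqref{eq:est_SSz}, the $\tilde v$-slots obstruct an absolute-value argument (since $\tilde v \notin L^1$ in general) and the oscillation $e^{it\Phi}$ must be exploited. Naively plugging $z$ into Lemma \ref{lem:N_E} does not work, because $\|z\|_{\q E(t)}$ blows up like $t^{-1/6}$ as $t \to 0^+$ (the $L^2$-norm of $\partial_\xi \hat z$ is not controlled by $\nz{z}$ alone). Instead, my strategy is to perform a reduced, lower-dimensional stationary-phase analysis tailored to where the $\hat z$-slots appear.

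For \eqref{eq:est_Szz}, writing $\xi_2 = \xi - \xi_1 - \xi_3$ and setting
\[
N[\tilde z, \tilde z, \tilde v](t,\xi) = \frac{i\xi}{4\pi^2}\int K(t,\xi,\xi_3)\, \tilde v(\xi_3)\, d\xi_3, \quad K := \int e^{it\Phi} \hat z(\xi_1) \hat z(\xi-\xi_1-\xi_3)\,d\xi_1,
\]
the kernel $K$ is a bilinear oscillatory integral whose phase has, in $\xi_1$, a single nondegenerate critical point $\xi_1^* = (\xi-\xi_3)/2$ with $\partial^2_{\xi_1}\Phi|_* = -6(\xi-\xi_3)$. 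Classical stationary phase, using $\hat z \in L^\infty$ for the principal term and $\partial_\xi \hat z \in L^1$ to control the remainder, yields pointwise bounds of the form $|K(t,\xi,\xi_3)| \lesssim \min(\nz{z}^2, \nz{z}^2 / \sqrt{t|\xi-\xi_3|})$. A subsequent treatment of the $\xi_3$-integral, combining $\|\tilde v\|_{L^\infty} \le \|v\|_{\q E(t)}$ with oscillation-based bounds on $\int K \tilde v\, d\xi_3$ and interpolation with the universal $t^{-1}$ estimate of Lemma \ref{lem:N_E}, produces the claimed $t^{-2/3}$ decay.

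For \eqref{eq:est_SSz}, the analogous reduction, now with $\xi_1$ as outer variable, leads to a bilinear-in-$\tilde v$ oscillatory integral
\[
J(t,\xi,\xi_1) := \int e^{it\Phi}\,\tilde v(\xi_2)\,\tilde v(\xi-\xi_1-\xi_2)\,d\xi_2.
\]
Stationary phase at $\xi_2^* = (\xi-\xi_1)/2$ gives $|J| \lesssim t^{-1/2}|\xi-\xi_1|^{-1/2}\|v\|_{\q E(t)}^2$ away from the degeneracy $\xi_1 = \xi$, with the remainder absorbed by the $1/2$-H\"older regularity of $\tilde v$ ensured by $\partial_\xi \tilde v \in L^2$. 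Integrating against $\hat z(\xi_1)$ using a dyadic split $|\xi-\xi_1| \leq R$ vs.\ $|\xi-\xi_1| > R$, optimized via $\|\hat z\|_{L^1} + \|\hat z\|_{L^\infty} \lesssim \nz{z}$, yields an intermediate bound; a final interpolation with Lemma \ref{lem:N_E} delivers the $t^{-8/9}$ rate.

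The main obstacle is the calibration producing the precise exponents $-8/9$ and $-2/3$: no single argument yields them directly, and both arise from a delicate balance between the trivial $t^{-1}$ bound of Lemma \ref{lem:N_E} and improved bounds stemming from the $L^1$-integrability of $\hat z$ and $\partial_\xi \hat z$. Controlling the integrand near the degenerate loci of $\Phi$ (where $\partial^2\Phi$ vanishes and the stationary-phase kernels $|\xi - \xi_i|^{-1/2}$ become singular) is where the weight $\jap{\xi}$ on $\partial_\xi \hat z$ in $\nz{\cdot}$ plays a crucial role.
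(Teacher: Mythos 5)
Your treatment of \eqref{eq:est_zzz} is correct and takes a mildly different (and arguably cleaner) route than the paper: you pass to physical space and bound $\|\partial_x(Z_t^3)\|_{L^1}$, whereas the paper stays on the Fourier side and bounds the integrand by $\|\hat z\|_{L^1}^2\|\xi\hat z\|_{L^\infty}$. Both are elementary and give the same bound.

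For \eqref{eq:est_Szz} and \eqref{eq:est_SSz}, however, what you offer is a plan, not a proof, and the plan has a genuine gap exactly at the point you yourself flag (``no single argument yields them directly''). Concretely, consider \eqref{eq:est_Szz}: after your first stationary phase in $\xi_1$, the remaining $\xi_3$-integral does \emph{not} converge when you take absolute values, because $\tilde v$ carries only an $L^\infty$ bound (it is not integrable) and your kernel bound $|K|\lesssim\nz z^2/\sqrt{t|\xi-\xi_3|}$ decays too slowly at infinity. You mention ``oscillation-based bounds on $\int K\tilde v\,d\xi_3$'' and ``interpolation with Lemma~\ref{lem:N_E}'', but these are the entire difficulty. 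The phase that survives the first stationary phase, $\Psi(\xi_3)=\tfrac34(\xi+\xi_3)^2(\xi-\xi_3)$, has a critical point at $\xi_3=-\xi$ which coincides with the regime where the first stationary phase is becoming less effective (large $\xi_1^*$); controlling the resulting two-level expansion together with the error terms (which require $\partial_\xi\hat z\in L^1$ and $\partial_\xi\tilde v\in L^2$) is where the work lies. Similarly, ``interpolation'' between the pointwise bound of Lemma~\ref{lem:N_E} and an improved bound is not a well-defined operation at the level of $L^\infty_\xi$ norms pointwise in time, and cannot just be invoked to produce the exponents $-8/9$ and $-2/3$.

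The paper's method is genuinely different and avoids this tangle: it never reduces to a one-dimensional stationary phase. Instead it splits the two-dimensional frequency domain into regions $\q A$, $\q B$, $\q C$ according to which frequency ($\xi_1$, $\xi$ or $\xi_2$) dominates, then performs a \emph{single} integration by parts in the appropriate variable in each region, using the $L^1$-integrability of $\hat z$ (and $\partial_\xi\hat z$) to make the 2D integrals converge outright. The exponent $-8/9$ in \eqref{eq:est_SSz} emerges not from interpolation but from optimizing a free width parameter $a$ (taken $=|\xi|^{2/3}$) that separates a near-diagonal region $|\,|\xi_2|-|\xi_3|\,|\le a$ (IBP in $\xi_1$) from its complement (IBP in $\xi_2$). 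The better rate $-2/3$ in \eqref{eq:est_Szz} comes from the stronger $L^1(d\xi_1d\xi_2)$ bounds available when two factors are $\hat z$. To close the gap, you would need to spell out and prove the second-level oscillatory estimate with all error terms, track the degenerate loci carefully, and verify the exponents by hand rather than by ``calibration'' — at which point you would essentially be doing the paper's domain decomposition anyway.
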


%\eqref{eq:est_zzz}

%\begin{proof}
%Rewrite the linear term as
%\[
%I=\xi \int e^{it\Psi} \frac{1}{t^{1/3}}K(S,S)(t^{1/3}\eta)z(\xi-\eta)d\eta.
%\]
%For a fixed $0<\nu<1$, consider the case $|\xi-\eta|\gtrsim t^\nu |\xi|$. Then, using a Young's inequality for Lorentz spaces,
%\[
%|I| \lesssim \frac{1}{t^{1/2+\nu}}\int \frac{|\xi-\eta|}{\sqrt{|\eta|}}z(\xi-\eta)d\eta \lesssim \frac{1}{t^{1/2+\nu}}\|\xi z\|_{L^2}.
%\]
%In the complementary case, one has $|\partial_\eta \Psi|\gtrsim |\xi|^2$. Integrating by parts,
%\begin{align*}
%|I|\lesssim \frac{\xi}{t^{1/2}}\left|\int e^{it\Psi}\partial_\eta \left(\frac{1}{it\partial_\eta \Psi}K(S,S)(t^{1/3}\eta)z(\xi-\eta) \right)d\eta \right| \lesssim \int \frac{\xi}{|t|^{3/2}|\xi|^{5/2}}z(\xi-\eta)d\eta + \int \frac{\xi}{|t|^{3/2}|\xi|^{3/2}}z'(\xi-\eta)d\eta
%\end{align*}
%\end{proof}

\begin{proof}

Estimate \eqref{eq:est_zzz} is direct : we simply bound by
	\begin{align*}
	|N[\tilde z, \tilde z, \tilde z](t,\xi)| & \le \left(\int_{\xi_1+\xi_2+\xi_3 =\xi} (|\xi_1| + |\xi_2| + |\xi_3|) |\hat{z}(\xi_1)\hat{z}(\xi_2) \hat z(\xi_3) |d\xi_1d\xi_2\right) \\
	& \lesssim  \| \hat z \|_{L^1}^2 \| \xi \hat z \|_{L^\infty} \lesssim \nz{z}^3.
	\end{align*}
We now prove  \eqref{eq:est_SSz}, \eqref{eq:est_Szz} simultaneously. For each fixed $t \in (0,1]$ and $\xi \in \m R$, we split $\m R^2$ into several domains $\q A, \q B$, etc.. For each of them, we consider various cases depending on the relative size of the frequencies involved with respect to $t$ (of course, the implicit constants do not depend on $(t,\xi)$). 

To shorten notation, we denote 
\[ I_1 = N[\tilde z,\tilde v,\tilde v](t,\xi) \quad \text{and} \quad  I_2 = N[\tilde z,\tilde z,\tilde v](t,\xi), \]
and, if $\q D \subset \m R^2$, we denote $I_1(\q D)$, $I_2(\q D)$ the corresponding integral where the domain of integration is $\q D$ instead of $\m R^2$.

\bigskip

\emph{Case $\q A$}. Let $\q A : = \{ (\xi_1,\xi_2) \in \m R^2 :  |\xi_1| \ge \max(|\xi|,|\xi_2|)/100 \}$.

The bound in this case is direct. Indeed,
\begin{align*}
|I_1(\q A)| & \lesssim |\xi| \int_{|\xi_1| \ge |\xi|/10} |\hat z(\xi_1)| \left( \int_{|\xi_2| \le 10 |\xi_1|} d\xi_2 \right) d\xi_1  \| \tilde v \|_{L^\infty}^2 \lesssim \int_{\m R} |\xi_1|^2 |\hat z(\xi_1)| d\xi_1  \| \tilde v \|_{L^\infty}^2 \\
&  \lesssim \| \jap{\xi}^2 \hat z \|_{L^1} \| v \|_{\q E(t)}^2.
\end{align*}
Similarly,
\[ |I_2(\q A)| \lesssim |\xi| \int_{\q A} |\hat z(\xi_1) \hat z(\xi_2)| d\xi_1 d\xi_2 \| \tilde v \|_{L^\infty} \lesssim \| \jap{\xi}\hat z \|_{L^1} \| \hat z \|_{L^1} \| v \|_{\q E(t)} \lesssim \vvvert z \vvvert^2 \| v \|_{\q E(t)}. \]

\bigskip

\emph{Case $\q B$}. Let $\q B : = \{ (\xi_1,\xi_2) \in \m R^2 : |\xi| \ge \max(|\xi_2|/10, 10 |\xi_1|) \}$. Here we consider several subcases depending on the size of $t \xi^3$.

\textit{Step }($\q B.0$). If $|t\xi^3|<10^9$, then
\begin{align*}
|I_1(\q B)| \lesssim |\xi| \int_{|\xi_2| \le 10 |\xi|} |\tilde v| d \xi_2 \| \hat z \|_{L^1} \| \tilde v \|_{L^\infty} \lesssim |\xi|^2  \| \hat z \|_{L^1} \| \tilde v \|_{L^\infty}  \lesssim \frac{1}{t^{2/3}} \vvvert z \vvvert \| v \|_{\q E(t)}^2.
\end{align*}
We bound similarly
\[ |I_2(\q B)| \lesssim |\xi|\| \hat z \|_{L^1}^2 \| \tilde v \|_{L^\infty} \lesssim \frac{1}{t^{1/3}} \vvvert z \vvvert^2 \| v \|_{\q E(t)}. \]

For the remaining computations in Case $\q B$, we assume that
\[ |t \xi^3| \ge 10^9, \]
and we further split the domain $\q B$ by letting
\[ \q B_1 = \{ (\xi_1,\xi_2) \in \q B : ||\xi_3|-|\xi_2|| \le a \} \quad \text{and} \quad \q B_2 = \q B \setminus \q B_1, \]
for some $0 < a < |\xi|/10$ (depending on $\xi$) to be fixed later.
We will perform an integration by parts using
\[ e^{i t \Phi} = \partial_{\xi_j} (e^{i t \Phi}) \frac{1}{it  \partial_{\xi_j} \Phi}, \]
where $j=1,2$ and recall that
\[  \partial_{\xi_j} \Phi = 3(\xi_3^2 - \xi_j^2) = 3(\xi_3 + \xi_j)(\xi_3-\xi_j). \]
Notice that on $\q B$, $|\partial_{\xi_j\xi_j}^2 \Phi| \lesssim |\xi|$. Also, an extra care should be taken with the boundary terms, as $\tilde v$ may have a jump at frequency $0$. To this end, the domains of integration are meant to be deprived from the lines $\xi_2=0$ or $\xi_3=0$, while the boundary terms are always meant to contain the corresponding portion of these lines. This is why, throughout this proof, we change from the standard notation and denote by $\partial \Delta$ the boundary of $\Delta \setminus (\{ \xi_2=0 \} \cup \{ \xi_3=0 \})$. This does not weigh on the estimates, as we will use the $\| \tilde v \|_{L^\infty}$ bound to control the boundary terms.

\bigskip

\textit{Step }$(\q B.1)$. On $\q B_1$, we have $|\xi_2+\xi_3-\xi| = |\xi_1| \le |\xi|/10$ so that $|\xi_2+\xi_3| \ge 9|\xi|/10$. On the other side, $||\xi_2| -|\xi_3|| \le a \le |\xi|/10$ is small relative to $|\xi_2+\xi_3|$: this implies that $|\xi_2 - \xi_3| = ||\xi_2| -|\xi_3|| \le |\xi|/10$, and we infer
\[ | \xi_2 - \xi/2|, | \xi_3 - \xi/2| \le |\xi|/10. \]
As a consequence, $|\xi_3|- |\xi_1| \ge |\xi|/2 - |\xi|/5 \ge  |\xi|/4$ and so $|\partial_{\xi_1} \Phi| \gtrsim |\xi|^2$. Therefore, we perform an IBP with respect to $\xi_1$:
\begin{align*}
|I_1(\q B_1)| & \le  \left| \xi \int_{\q B_1}  e^{it\Phi} \partial_{\xi_1}\left( \frac{1}{it\partial_{\xi_1} \Phi} \hat z(\xi_1) \tilde{v}(t,\xi_3) \right)   \tilde{v}(t,\xi_2)  d\xi_1 d\xi_2 \right| \\
& \quad + |\xi| \int_{\partial \q B_1} \frac{1}{t |\partial_{\xi_1} \Phi|}  |\hat z(\xi_1)   \tilde{v}(t,\xi_2)  \tilde{v}(t,\xi_3)| d\sigma(\xi_1,\xi_2) \\
& \lesssim |\xi| \int_{\q B_1} \frac{|\xi|}{t |\xi|^4} |\hat z(\xi_1)| \| \tilde v \|_{L^\infty}^2  + \frac{1}{t|\xi|^2}  |\partial_\xi z(\xi_1)| \| \tilde  v \|_{L^\infty}^2 d\xi_1 d\xi_2  \\
& \quad + |\xi| \int_{\q B_1} \frac{1}{t|\xi|^2}  |z(\xi_1)| |\partial_\xi \tilde v(\xi_3)| \| \tilde v \|_{L^\infty} d\xi_1 d\xi_2  + |\xi| \int_{\partial \q B_1} \frac{1}{t |\xi|^2} |\hat z(\xi_1)| \| \tilde v \|_{L^\infty}^2 d\sigma(\xi_1,\xi_2).
\end{align*}
On $\q B_1$, for fixed $\xi_1$, $|\xi_2 - (\xi- \xi_1)/2| = |\xi_2 - \xi_3|/2 \le a/2$, so that 
\begin{align*}
|\xi| \int_{\q B_1} \frac{|\xi|}{t |\xi|^4} |\hat z(\xi_1)| d\xi_1 d\xi_2 & \lesssim  \frac{1}{t|\xi|^2} \int_{\m R}  |z(\xi_1)| \left( \int_{|\xi_2 - (\xi- \xi_1)/2| \le a/2} d\xi_2 \right) d\xi_1 \lesssim \frac{a}{t |\xi|^2} \| \hat z \|_{L^1}, \\
|\xi| \int_{\q B_1}  \frac{1}{t|\xi|^2}  |\partial_\xi z(\xi_1)| d\xi_1 d\xi_2 & \lesssim \frac{1}{t|\xi|} \int_{\m R}  |\partial_\xi z(\xi_1)| \left( \int_{|\xi_2 - (\xi- \xi_1)/2| \le a/2} d\xi_2 \right) d\xi_1 \lesssim \frac{a}{t |\xi|} \| \partial_\xi \hat z \|_{L^1}
\end{align*}
and
\begin{align*}
& |\xi| \int_{\q B_1}  \frac{1}{t|\xi|^2}  |z(\xi_1)| |\partial_\xi \tilde v(\xi_3)| d\xi_1 d\xi_2 \\\lesssim & \frac{1}{t |\xi|} \int_{\m R}  | \hat z(\xi_1)| \left( \int_{|\xi_2 - (\xi- \xi_1)/2| \le a/2} |\partial_\xi \tilde v(\xi- \xi_1 - \xi_2)|  d\xi_2 \right) d\xi_1 \\
 \lesssim & \frac{1}{t|\xi|} \| \hat z \|_{L^1} a^{1/2} \| \partial_\xi \tilde v \|_{L^2} \lesssim \frac{a^{1/2}}{t^{5/6} |\xi|} \| \hat z \|_{L^1} \| v \|_{\q E(t)}.
\end{align*}
We see on the second bound that one requires $a \ll |\xi|$ in order to gain over the $1/t$ bound.

For the boundary term, we have
\begin{align*}
 |\xi| \int_{\partial \q B_1} \frac{1}{t |\xi|^2} |\hat z(\xi_1)| d\sigma(\xi_1,\xi_2) \lesssim \frac{1}{t|\xi|} \| \hat z \|_{L^1}.
\end{align*}
Therefore,
\begin{align} \label{est:I1_B1} 
|I_1(\q B_1)| \lesssim  \left(  \frac{a}{t |\xi|} +  \frac{a^{1/2}}{t^{5/6} |\xi|} +\frac{1}{t|\xi|} \right) \| \hat z \|_{W^{1,1}} \| v \|_{\q E(t)}^2.
\end{align}

\bigskip

\textit{Step }$(\q B.2)$. On $\q B_2$, $||\xi_2| - |\xi_3|| \ge a$. Also, as $|\xi_1| \le |\xi|/10$, $|\xi_2| + |\xi_3| \ge 9|\xi|/10$ and so $|\partial_{\xi_2} \Phi| \gtrsim a |\xi|$. Here, we perform an IBP in $\xi_2$:
\begin{align*}
|I_1(\q B_2)| & \le  \left| \xi \int_{\q B_2}  e^{it\Phi} \partial_{\xi_2}\left( \frac{1}{it\partial_{\xi_2} \Phi}  \tilde{v}(t,\xi_2)  \tilde{v}(t,\xi_3) \right)  \hat z(\xi_1)  d\xi_1 d\xi_2 \right| \\
& \quad + |\xi| \int_{\partial \q B_2} \frac{1}{t |\partial_{\xi_2} \Phi|}  |\hat z(\xi_1)   \tilde{v}(t,\xi_2)  \tilde{v}(t,\xi_3)| d\sigma(\xi_1,\xi_2) \\
& \lesssim |\xi| \int_{\q B_2}  \frac{|\xi|}{t |a \xi|^2}  |\hat z(\xi_1)| \| \tilde v \|_{L^\infty}^2 + \frac{1}{t|a\xi|}  |\hat z(\xi_1)| |\partial_\xi v(\xi_2)|  \| \tilde v \|_{L^\infty} d\xi_1 d\xi_2 \\
& \quad +|\xi|\int_{\q B_2}  \frac{1}{t|a \xi|}  |\hat z(\xi_1)|  \| \tilde  v \|_{L^\infty} |\partial_\xi \tilde v(\xi_3)|d\xi_1d\xi_2 + |\xi| \int_{\partial \q B_2} \frac{1}{t |a\xi|} |\hat z(\xi_1)| \| \tilde v \|_{L^\infty}^2 d\sigma(\xi_1,\xi_2)
\end{align*}
Observe that all derivatives fall on $\tilde v$ (or $\Phi$, but not $\hat z$), the point being that $\| \partial_\xi \tilde v \|_{L^2}$ is better behaved than $\| \tilde v \|_{L^\infty}$. To complete the bounds, we now only use that $|\xi_1|, |\xi_2| \lesssim |\xi|$ on $\q B_2$ as follows
\begin{align*}
\MoveEqLeft |\xi| \int_{\q B_2} \frac{1}{t a^2 |\xi|} |\hat z(\xi_1)| d\xi_1 d\xi_2 \lesssim \frac{|\xi|}{t a^2} \| \hat z \|_{L^1}, \\
\MoveEqLeft |\xi| \int_{\q B_1}  \frac{1}{t|a\xi|}  |z(\xi_1)| (|\partial_\xi \tilde v(\xi_2)| + |\partial_\xi \tilde v(\xi_3)| d\xi_1 d\xi_2 \\
& \lesssim \frac{1}{t a} \int_{\m R}  | \hat z(\xi_1)| \left( \int_{|\xi_2| \lesssim |\xi|} (|\partial_\xi v(\xi_2)| +| \partial_\xi \tilde v(\xi- \xi_1 - \xi_2)|  d\xi_2 \right) d\xi_1 \\
& \lesssim  \frac{1}{ta} \| \hat z \|_{L^1} |\xi|^{1/2} \| \partial_\xi \tilde v \|_{L^2} \lesssim \frac{|\xi|^{1/2}}{t^{5/6} |a|} \| \hat z \|_{L^1} \| v \|_{\q E(t)}.
\end{align*}
For the boundary term, we simply have
\[ \int_{\partial \q B_2} \frac{1}{t a} |\hat z(\xi_1)|  d\sigma(\xi_1,\xi_2) \lesssim \frac{1}{ta} \| \hat z \|_{L^1}. \]
Therefore,
\begin{align} \label{est:I1_B2}
I_1(\q B_2)| \lesssim \left(  \frac{|\xi|}{t a^2} +  \frac{|\xi|^{1/2}}{t^{5/6} a} + \frac{1}{ta} \right) \| \hat z \|_{W^{1,1}} \| v \|_{\q E(t)}^2.
 \end{align}

\bigskip

\textit{Step }$(\q B.3)$. We now optimize in $a$, choosing $a = |\xi|^{2/3}$. As $|\xi|^{1/3} \ge 10 t^{-1/9} \ge 10$, $a \le |\xi|/10$, which justifies the above computations. Using \eqref{est:I1_B1} and \eqref{est:I1_B2}, and  that $|\xi|^{-1} \lesssim t^{1/3}$, we get
\begin{align} \label{est:I1_B}
 |I_1(\q B)| \le |I_1(\q B_1)| + |I_1(\q B_2)| \lesssim \frac{1}{t^{8/9}} \| \hat z \|_{W^{1,1}} \| v \|_{\q E(t)}^2.
 \end{align}

\bigskip

 We now bound $I_2(\q B)$. The bounds are obtained in a similar fashion as for $I_1(\q B)$ (they are in fact simpler). However, to sharpen the bound, the frequency splitting is slightly different:
\[ \q B_4 = \{ (\xi_1,\xi_2) \in \q B : ||\xi_3|-|\xi_2|| \le |\xi|/10 \} \quad \text{and} \quad \q B_5 = \q B \setminus \q B_4, \]
(this corresponds to the choice $a = |\xi|/10$).

\bigskip

\textit{Step }$(\q B.4)$. For $I_2(\q B_4)$, as $|\partial_{\xi_1} \Phi| \gtrsim |\xi|^2$ on $\q B_4$, we perform an  IBP in $\xi_1$:
\begin{align*}
|I_2(\q B_4)| & \le  \left| \xi \int_{\q B_4}  e^{it\Phi} \partial_{\xi_1}\left( \frac{1}{it\partial_{\xi_1} \Phi} \hat z(\xi_1) \tilde{v}(t,\xi_3) \right) \hat z(\xi_2)  d\xi_1 d\xi_2 \right| \\
& \quad + |\xi| \int_{\partial \q B_4} \frac{1}{t |\partial_{\xi_1} \Phi|}  |\hat z(\xi_1) \hat z(\xi_2)  \tilde{v}(t,\xi_3)| d\sigma(\xi_1,\xi_2) \\
& \lesssim |\xi| \int_{\q B_4} \frac{|\xi|}{t |\xi|^4} |\hat z(\xi_1)| |\hat z(\xi_2)| \| \tilde v \|_{L^\infty}  + \frac{1}{t|\xi|^2}  |\partial_\xi \hat z(\xi_1) \hat z(\xi_2)| \| \tilde  v \|_{L^\infty}  d\xi_1 d\xi_2 \\
& \quad + |\xi|\int_{\q B_4}  \frac{1}{t|\xi|^2}  |\hat z(\xi_1) \hat z (\xi_2) \partial_\xi \tilde v(\xi_3) | d\xi_1d\xi_2\\
& \quad + |\xi| \int_{\partial \q B_4} \frac{1}{t |\xi|^2} |\hat z(\xi_1)|  \| \hat z \|_{L^\infty} \| \tilde v \|_{L^\infty} d\sigma(\xi_1,\xi_2).
\end{align*}
The gain over the case $(\q B.1)$ comes from the two factors in $z$ which insure an $L^1(d\xi_1d\xi_2)$ bound:
\begin{align*}
|I_2(\q B_4)| & \lesssim \frac{1}{t|\xi|^2} \| \hat z \|_{L^1}^2 \| \tilde v \|_{L^\infty} + \frac{1}{t|\xi|} \| \partial_\xi \hat z \|_{L^1} \| \hat z \|_{L^1} \| \tilde v \|_{L^\infty} \\ & \qquad + \frac{1}{t|\xi|} \| \hat z \|_{L^1} \| \partial_\xi v \|_{L^2} \| \hat z \|_{L^2} + \frac{1}{t |\xi|} \|\hat z \|_{L^1} \| \hat z \|_{L^\infty} \| \tilde v \|_{L^\infty} \\
& \lesssim \frac{1}{t|\xi|} \|\hat z\|_{L^1} \| \hat z \|_{W^{1,1}} \| v \|_{\q E(t)}.
\end{align*}
As we assumed $|\xi| \gtrsim t^{-1/3}$ here, we infer
\begin{align} \label{est:I2_B1}
|I_2(\q B_4)| & \lesssim \frac{1}{t^{2/3}} \| \hat z \|_{L^1} \| \hat z \|_{W^{1,1}} \| v \|_{\q E(t)}.
\end{align}

\textit{Step }$(\q B.5)$. For $I_2(\q B_5)$, $|\partial_{\xi_2} \Phi| \gtrsim |\xi|^2$, so that we perform an IBP in $\xi_2$:
\begin{align*}
\MoveEqLeft |I_2(\q B_5)|  \le  \left| \xi \int_{\q B_5}  e^{it\Phi} \partial_{\xi_2}\left( \frac{1}{it\partial_{\xi_2} \Phi}  \hat z(\xi_2)  \tilde{v}(t,\xi_3) \right)  \hat z(\xi_1)  d\xi_1 d\xi_2 \right| \\
& \qquad + |\xi| \int_{\partial \q B_5} \frac{1}{t |\partial_{\xi_1} \Phi|}  |\hat z(\xi_1)  |\hat {z}(\xi_2)  \tilde{v}(t,\xi_3)| d\sigma(\xi_1,\xi_2) \\
& \lesssim |\xi| \int_{\q B_5}  \frac{|\xi|}{t | \xi|^4}  |\hat z(\xi_1)|  |\hat z(\xi_2)| \| \tilde v \|_{L^\infty} + \frac{1}{t|\xi|^2}  |\hat z(\xi_1)| |\partial_\xi \hat z(\xi_2)|  \| \tilde v \|_{L^\infty} d\xi_1d\xi_2 \\
& \qquad + |\xi| \int_{\q B_5}\frac{1}{t|\xi|^2}  |\hat z(\xi_1)|  |\hat z(\xi_2)|  |\partial_\xi \tilde v(\xi_3)|  d\xi_1 d\xi_2 \\&\qquad+ |\xi| \int_{\partial \q B_5} \frac{1}{t |\xi|^2} |\hat z(\xi_1)| \|\hat z \|_{L^\infty} \| \tilde v \|_{L^\infty} d\sigma(\xi_1,\xi_2) \\
& \lesssim \frac{1}{t |\xi|^2} \| \hat z \|_{L^1}^2  \| \tilde v \|_{L^\infty} + \frac{1}{t|\xi|} \| \hat z \|_{L^1} \| \partial_\xi \hat z \|_{L^1} \| v \|_{L^\infty} \\& \qquad + \frac{1}{t|\xi|} \| \hat z \|_{L^1} \| \hat z \|_{L^2} \| \partial_\xi \tilde v \|_{L^2}   + \frac{1}{t|\xi|}\|\hat z\|_{L^1}\|\hat z\|_{L^\infty}\|\tilde{v}\|_{L^{\infty}}\\
& \lesssim \frac{1}{t|\xi|} \| \hat z \|_{L^1} \| \hat z \|_{W^{1,1}} \| \tilde v \|_{\q E(t)}.
\end{align*}
(recall that $0 <t \le 1$). Together with \eqref{est:I2_B1}, we infer
\begin{align} \label{est:I2_B}
|I_2(\q B)| \le |I_2(\q B_4)| + |I_2(\q B_5)| \lesssim \frac{1}{t^{2/3}}  \| \hat z \|_{L^1} \| \hat z \|_{W^{1,1}} \| \tilde v \|_{\q E(t)}.
\end{align}

\bigskip

\emph{Case $\q C$.} We finally consider
\[ \q C = \m R^2 \setminus (\q A \cup \q B) = \{ (\xi_1,\xi_2) \in \m R^2 : |\xi_1| < \max(|\xi|,|\xi_2|)/100, \ |\xi| < \max( |\xi_2|/10, 10 |\xi_1|) \}. \]
Observe that on $\q C$, $|\xi| \le \max( |\xi_2|/10, \max(|\xi|,  |\xi_2|)/10) = \max(|\xi|/10, |\xi_2|/10)$ so that $|\xi| \le |\xi_2|/10$, and therefore $|\xi_1| \le |\xi_2|/100$.
Hence $|\xi_3+\xi_2| \le |\xi_2| /5$, $|\xi_3| \ge 4|\xi_2|/5$ and $\xi_2$ and $\xi_3$ are the highest frequencies (of the same magnitude). In particular, $|\partial_{\xi_i \xi_j}^2 \Phi| \lesssim |\xi_2|$ on $\q C$.

We argue in $\q C$ in the same spirit as we did for case $\q B$. We split
\begin{align*}
\q C_0 &= \{ (\xi_1, \xi_2) \in \q C : |t\xi_2^3| \le 10^9 \} \\
\q C_1 &= \{ (\xi_1, \xi_2) \in \q C : |t\xi_2^3| \ge 10^9 \text{ and } |\xi-\xi_1| \le |\xi_2|^{2/3} \}, \\
\q C_2 &= \{ (\xi_1, \xi_2) \in \q C : |t\xi_2^3| \ge 10^9 \text{ and }  |\xi-\xi_1| \ge |\xi_2|^{2/3} \}.
\end{align*}
($\xi_2$ is now playing the role of $\xi$ in Case $\q B$).

\bigskip 

\textit{Step }$(\q C.0)$. On $\q C_0$, we bound as in $(\q B.0)$:
\begin{align}
I_1(\q C_0) & \le |\xi| \int_{\q C_0} |\hat z(\xi_1)|  d\xi_1 d\xi_2 \| \tilde v \|_{L^\infty}^2 \lesssim \| \hat z \|_{L^1}  \| \tilde v \|_{L^\infty}^2 \int_{|\xi_2| \lesssim t^{-1/3}} |\xi_2| d\xi_2 \\
&  \lesssim \frac{1}{t^{2/3}}  \| \hat z \|_{L^1}  \| \tilde v \|_{L^\infty}^2, \label{est:I1_C0} \\
I_2(\q C_0) & \le |\xi| \int_{\q C_0} |\hat z(\xi_1)|   d\xi_1 d\xi_2 \| \hat z \|_{L^\infty} \| \tilde v \|_{L^\infty} \lesssim \| \hat z \|_{L^1}   \| \hat z \|_{L^\infty}  \| \tilde v \|_{L^\infty} \int_{|\xi_2| \lesssim t^{-1/3}} |\xi_2| d\xi_2 \\
& \lesssim \frac{1}{t^{2/3}}  \| \hat z \|_{L^1}  \| \hat z \|_{L^\infty} \| \tilde v \|_{L^\infty}. \label{est:I2_C0} 
\end{align}

\bigskip

\textit{Step }$(\q C.1)$. On $\q C_1$, we integrate by parts in $\xi_1$: observe that in this domain $|\xi_1| \le |\xi_2|/100$ and $|\xi_3| \ge 4|\xi_2|/5$ so that
\[ |\partial_{\xi_1} \Phi| = 3 |\xi_1^2-\xi_3^2| \gtrsim |\xi_2|^2. \]
Hence,
\begin{align*}
|I_1(\q C_1)| & \le  \left| \xi \int_{\q C_1}  e^{it\Phi} \partial_{\xi_1}\left( \frac{1}{it\partial_{\xi_1} \Phi} \hat z(\xi_1) \tilde{v}(t,\xi_3) \right)   \tilde{v}(t,\xi_2)  d\xi_1 d\xi_2 \right| \\
& \qquad + |\xi| \int_{\partial \q C_1} \frac{1}{t |\partial_{\xi_1} \Phi|}  |\hat z(\xi_1)   \tilde{v}(t,\xi_2)  \tilde{v}(t,\xi_3)| d\sigma(\xi_1,\xi_2) \\
& \lesssim |\xi| \int_{\q C_1} \frac{|\xi_2|}{t |\xi_2|^4} |\hat z(\xi_1)| \| \tilde v \|_{L^\infty}^2  + \frac{1}{t|\xi_2|^2}  |\partial_\xi \hat z(\xi_1)| \| \tilde  v \|_{L^\infty}^2 d\xi_1d\xi_2 \\&\qquad+|\xi|\int_{\q C_1}  \frac{1}{t|\xi_2|^2}  |z(\xi_1)| |\partial_\xi \tilde v(\xi_3)| \| \tilde v \|_{L^\infty} d\xi_1 d\xi_2 \\
& \qquad + |\xi| \int_{\partial \q C_1} \frac{1}{t |\xi_2|^2} |\hat z(\xi_1)| \| \tilde v \|_{L^\infty}^2 d\sigma(\xi_1,\xi_2).
\end{align*}
On $\q C_1$, $|\xi| \lesssim |\xi_2|$, and for fixed $\xi_2$, $|\xi-\xi_1|  \le |\xi_2|^{2/3}$, so that 
\begin{align*}
|\xi| \int_{\q C_1} \frac{|\xi_2|}{t |\xi_2|^4} |\hat z(\xi_1)| d\xi_1 d\xi_2 & \lesssim \frac{1}{t} \int_{|\xi_2| \gtrsim t^{-1/3}} \frac{d\xi_2}{|\xi_2|^2} \| \hat z \|_{L^1} \lesssim \frac{1}{t^{2/3}}   \| \hat z \|_{L^1}, \\
|\xi| \int_{\q C_1}  \frac{1}{t|\xi_2|^2}  |\partial_\xi z(\xi_1)| d\xi_1 d\xi_2 & \lesssim \frac{1}{t} \int_{|\xi_2|  \gtrsim t^{-1/3}} \int  \left( |\xi-\xi_1| + |\xi_1| |\partial_\xi z(\xi_1)| d\xi_1 \right) \frac{d\xi_2}{|\xi_2|^2}  \\
& \lesssim  \frac{1}{t}  \int_{|\xi_2| \gtrsim t^{-1/3}} \left( |\xi_2|^{2/3} \| \partial_\xi \hat z \|_{L^1} + \| \xi \partial_\xi \hat z \|_{L^1} \right) \frac{d\xi_2}{|\xi_2|^2} \\&\lesssim \frac{1}{t^{8/9}} \| \jap{\xi} \partial_\xi \hat z \|_{L^1}, \\
|\xi| \int_{\q C_1}  \frac{1}{t|\xi_2|^2}  |z(\xi_1)| |\partial_\xi \tilde v(\xi_3)| d\xi_1 d\xi_2 & \lesssim \frac{1}{t} \int_{\m R}  | \hat z(\xi_1)| \left( \int_{|\xi_2| \gtrsim t^{-1/3}} |\partial_\xi \tilde v(\xi_3)|  \frac{d\xi_2}{|\xi_2|} \right) d\xi_1 \\
& \lesssim \frac{1}{t} \| \hat z \|_{L^1} \| \partial_\xi \tilde v \|_{L^2} \left( \int_{|\xi_2| \gtrsim t^{-1/3}} \frac{d\xi_2}{|\xi_2|^2} \right)^{1/2} \\&\lesssim \frac{1}{t^{5/6}} \| \hat z \|_{L^1} \| \partial_\xi \tilde v \|_{L^2} \lesssim \frac{1}{t^{2/3}}\|\hat z\|_{L^1}\|v\|_{\q E (t)}.
\end{align*} 
On $\partial \q C_1$, since $|\xi_2| \gtrsim |\xi|$ and $|\xi_2| \gtrsim t^{-1/3}$, one has $\ds \frac{|\xi|}{ |\xi_2|^2} \lesssim t^{1/3}$. Thus
\begin{equation} \label{est:dC_1}
|\xi| \int_{\partial \q C_1} \frac{1}{t |\xi_2|^2} |\hat z(\xi_1)| d\sigma(\xi_1,\xi_2) \le \frac{1}{t^{2/3}} \| \hat z \|_{L^1},
\end{equation}
Therefore, we get
\begin{align} \label{est:I1_C1}
|I_1(\q C_1)| \lesssim \frac{1}{t^{8/9}} (\| \jap{\xi} \partial_\xi \hat z \|_{L^1} + \| \hat z \|_{L^1})  \| v \|_{\q E(t)}^2.
\end{align}

\bigskip

\textit{Step }$(\q C.2)$. On $\q C_2$, we integrate by parts in $\xi_2$: observe that in this domain
\[ |\partial_{\xi_2} \Phi| = 3 |\xi_2^2-\xi_3^2| = 3 |\xi_2- \xi_3| |\xi-\xi_1| \gtrsim |\xi_2|^{5/3},\]
Hence we can estimate
\begin{align*}
|I_1(\q C_2)| & \le \left| \xi \int_{\q C_2}  e^{it\Phi} \partial_{\xi_1}\left( \frac{1}{it\partial_{\xi_2} \Phi} \tilde{v}(t,\xi_2)  \tilde{v}(t,\xi_3) \right) \hat z(\xi_1) d\xi_1 d\xi_2 \right| \\
& \qquad + |\xi| \int_{\partial \q C_2} \frac{1}{t |\partial_{\xi_2} \Phi|}  |\hat z(\xi_1)   \tilde{v}(t,\xi_2)  \tilde{v}(t,\xi_3)| d\sigma(\xi_1,\xi_2) \\
& \lesssim |\xi| \int_{\q C_2}  \frac{|\xi_2|}{t |\xi_2|^{10/3}} |\hat z(\xi_1)| \| \tilde v \|_{L^\infty}^2 d\xi_1d\xi_2\\&\qquad  + |\xi|\int_{\q C_2}\frac{1}{t|\xi_2|^{5/3}}  |\hat z(\xi_1)| (|\partial_\xi \tilde  v(\xi_2)| +|\partial_\xi \tilde  v(\xi_3)|) \| \tilde v \|_{L^\infty} d\xi_1 d\xi_2 \\
& \qquad + |\xi| \int_{\partial \q C_2} \frac{1}{t |\xi_2|^{5/3}} |\hat z(\xi_1)| \| \tilde v \|_{L^\infty}^2 d\sigma(\xi_1,\xi_2).
\end{align*}
On $\q C_2$, $|\xi| \lesssim |\xi_2|$, so that 
\begin{align*}
\MoveEqLeft |\xi| \int_{\q C_2} \frac{1}{t |\xi_2|^{7/3}} |\hat z(\xi_1)| d\xi_1 d\xi_2 \lesssim \frac{1}{t} \int_{|\xi_2| \gtrsim t^{-1/3}} \frac{d\xi_2}{|\xi_2|^{4/3}} \| \hat z \|_{L^1} \lesssim \frac{1}{t^{8/9}}   \| \hat z \|_{L^1}, \\
\MoveEqLeft  |\xi| \int_{\q C_2}  \frac{1}{t|\xi_2|^{5/3}}  |\hat z(\xi_1) \partial_\xi \tilde  v(\xi_2)| d\xi_1 d\xi_2 \lesssim \frac{1}{t} \int_{\m R} |\hat z(\xi_1)| \left( \int_{|\xi_2|  \gtrsim t^{-1/3}} | \partial_\xi \tilde  v(\xi_2)|  \frac{d\xi_2}{|\xi_2|^{2/3}} \right) \\
& \lesssim \frac{1}{t} \| \hat z \|_{L^1} \|  \partial_\xi \tilde  v \|_{L^2} \left( \int_{|\xi_2|  \gtrsim t^{-1/3}} \frac{d\xi_2}{|\xi_2|^{4/3}} \right)^{1/2} \lesssim \frac{1}{t} \| \hat z \|_{L^1} \|  \partial_\xi \tilde  v \|_{L^2} t^{1/18} \lesssim \frac{1}{t^{7/9}}  \| \hat z \|_{L^1} \| v \|_{\q E(t)}.
\end{align*}
On $\partial \q C_2$, we have, as in \eqref{est:dC_1}, $\ds \frac{|\xi|}{|\xi_2|^{5/3}} \lesssim t^{2/9}$ so that
\begin{equation} \label{est:dC_2}
|\xi| \int_{\partial \q C_2} \frac{1}{t |\xi_2|^{5/3}} |\hat z(\xi_1)| d\sigma(\xi_1,\xi_2) \le \frac{1}{t^{7/9}} \| \hat z \|_{L^1}.
\end{equation}
Hence, we obtain
\[ |I_1(\q C_2)|  \lesssim \frac{1}{t^{8/9}} \| \hat z \|_{L^1} \| v \|_{\q E(t)}^2. \]
Together with \eqref{est:I1_C0} and \eqref{est:I1_C1}, we infer that
\[ |I_1(\q C)| \lesssim \frac{1}{t^{8/9}} \vvvert z \vvvert \| v \|_{\q E(t)}^2. \]

\bigskip

\textit{Step }$(\q C.3)$. We now bound $I_2$ on 
\[ \q C_3 = \q C \setminus \q C_0 = \{ (\xi_1,\xi_2) \in \q C : |t \xi_2|^3 > 10^9 \}. \]
For $I_2$, we don't need to further split the domain. As for $\q C_1$, on $\q C_3$ we integrate by parts with respect to $\xi_1$. In this region, $|\xi_3| \ge 4|\xi_2|/5$ and $|\xi_1| \le |\xi_2|/5$ so that $|\partial_{\xi_1} \Phi| = 3 |\xi_1^2 - \xi_3|^2 \gtrsim |\xi_2|^2$. Hence we bound:
\begin{align*}
|I_2(\q C_3)| & \le \left| \xi \int_{\q C_3}  e^{it\Phi} \partial_{\xi_1}\left( \frac{1}{it\partial_{\xi_1} \Phi} \hat z(\xi_1)  \tilde{v}(t,\xi_3) \right) \hat z(\xi_2) d\xi_1 d\xi_2 \right| \\
& \qquad + |\xi| \int_{\partial \q C_3} \frac{1}{t |\partial_{\xi_1} \Phi|}  |\hat z(\xi_1)  \hat z(\xi_2)  \tilde{v}(t,\xi_3)| d\sigma(\xi_1,\xi_2) \\
& \lesssim |\xi| \int_{\q C_3}  \frac{|\xi_2|}{t |\xi_2|^{4}} |\hat z(\xi_1) \hat z(\xi_2)| \| \tilde  v \|_{L^\infty}  + \frac{1}{t|\xi_2|^{2}}  |\partial_\xi \hat z(\xi_1)  \hat  z(\xi_2)| \| \tilde v \|_{L^\infty}d\xi_1d\xi_2\\&\qquad +  \int_{\q C_3} \frac{1}{t|\xi_2|^{2}}  |\hat z(\xi_1) \hat z(\xi_2) \partial_\xi \tilde  v(\xi_3)|  d\xi_1 d\xi_2 \\
& \qquad + |\xi| \int_{\partial \q C_3} \frac{1}{t |\xi_2|^{2}} |\hat z(\xi_1)| \| \hat z \|_{L^\infty} \| \tilde v \|_{L^\infty} d\sigma(\xi_1,\xi_2).
\end{align*}
On $\q C_3$, $|\xi| \lesssim |\xi_2|$ and $\frac{1}{|\xi_2|}\lesssim t^{1/3}$. Hence
\begin{align*}
 |\xi| \int_{\q C_3} \frac{|\xi_2|}{t |\xi_2|^{4}} |\hat z(\xi_1) \hat z(\xi_2)| d\xi_1 d\xi_2 &\lesssim \frac{1}{t^{1/3}} \int |\hat z(\xi_1) \hat z(\xi_2)| d\xi_1 d\xi_2\lesssim  \frac{1}{t^{1/3}} \| \hat z \|_{L^1}^2, \\
 |\xi| \int_{\q C_3}  \frac{1}{t|\xi_2|^{2}}  | \partial_\xi \hat z(\xi_1) z(\xi_2)|  d\xi_1 d\xi_2 &\lesssim \frac{1}{t^{2/3}} \int_{\m R} |\partial_\xi \hat z(\xi_1) z(\xi_2)| d\xi_1 d\xi_2 \lesssim  \frac{1}{t^{2/3}} \|\partial_\xi \hat z \|_{L^1} \| \hat z \|_{L^1}
 \end{align*}
 and
 \begin{align*}
  |\xi| \int_{\q C_3}  \frac{1}{t|\xi_2|^{2}}  |\hat z(\xi_1) z(\xi_2) \partial_\xi \tilde v(t,\xi_3)|  d\xi_1 d\xi_2 &\lesssim \frac{1}{t^{2/3}} \int |\hat z(\xi_1)| \left(  \int |z(\xi_2) \partial_\xi \tilde v(t,\xi_3)| \right) d\xi_2 \\
 \lesssim \frac{1}{t^{2/3}} \| \hat z \|_{L^1} \| \hat z \|_{L^2} \| \partial_\xi \tilde v \|_{L^2} &\lesssim \frac{1}{t^{1/2}} \| \hat z \|_{L^1} \| \hat z \|_{L^2} \| v \|_{\q E(t)}.
\end{align*}
On $\partial \q C_3$, we have similarly 
\begin{equation} \label{est:dC_3}
|\xi| \int_{\partial \q C_3} \frac{1}{t |\xi_2|^{2}} |\hat z(\xi_1)| d\sigma(\xi_1,\xi_2) \le \frac{1}{t^{2/3}} \| \hat z \|_{L^1}.
\end{equation}
Therefore
\[ |I_2(\q C_3)| \lesssim \frac{1}{t^{2/3}} \|  \hat z \|_{L^1} \| \hat z \|_{W^{1,1}} \| v \|_{\q E(t)}. \]

Together with \eqref{est:I2_C0}, we conclude that
\[ |I_2(\q C)| \lesssim \frac{1}{t^{2/3}} \|  \hat z \|_{L^1} \| \hat z \|_{W^{1,1}} \| v \|_{\q E(t)}. \]

\bigskip

\emph{Conclusion}.

Summing up the bounds obtained in case $\q A$, $\q B$ and $\q C$, and observing that these cover $\m R^2$, we conclude that
\begin{align*}
|I_1| & \lesssim \frac{1}{t^{8/9}}  \| \jap{\xi} \hat z \|_{W^{1,1}} \| v \|_{\q E(t)}^2. \\
|I_2| & \lesssim \frac{1}{t^{2/3}}  \| \hat z \|_{L^1} \| \hat z \|_{W^{1,1}} \| v \|_{\q E(t)}.  \qedhere
\end{align*}
\end{proof}

It turns out in the energy estimates involving the dilation operator $I$, some terms can not be interpreted as a full derivative (mainly because the equation for $w$ has a source term). In addition to \eqref{est:N_E}, we will also need the following bound on a term of the form $N[\tilde S, \tilde S,v]$, but where the weight $\xi$ (associated to the derivative in physical space) only falls on the $v$ term.

\begin{lem}\label{lem:estL2}
	For any $0<t<1$, if $v\in \mathcal{E}(t)$,
	\[
	\left\| \iint e^{it\Phi}\tilde{S}(t^{1/3}\xi_1)\tilde{S}(t^{1/3}\xi_2)\xi_3\tilde{v}(\xi_3)d\xi_1 d\xi_2\right\|_{L^\infty_\xi}\lesssim \frac{1}{t}\|S\|_{\mathcal{E}(t)}^2\|v\|_{\mathcal{E}(t)}.
	\]
\end{lem}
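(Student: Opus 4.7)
The plan is to combine Lemma \ref{lem:N_E} with the identity $\xi_3 = \xi - \xi_1 - \xi_2$ and a targeted stationary-phase / integration-by-parts analysis for the remaining term.

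First, I would substitute $\xi_3 = \xi - \xi_1 - \xi_2$ inside the integrand and use the symmetry $\tilde S(t^{1/3}\xi_1)\tilde S(t^{1/3}\xi_2) = \tilde S(t^{1/3}\xi_2)\tilde S(t^{1/3}\xi_1)$ in the $(\xi_1,\xi_2)$ variables to write
\[
J(\xi) := \iint e^{it\Phi}\tilde S(t^{1/3}\xi_1)\tilde S(t^{1/3}\xi_2)\xi_3\tilde v(\xi_3)\, d\xi_1 d\xi_2 = \xi\, J_0(\xi) - 2\, M(\xi),
\]
with $J_0(\xi) := \iint e^{it\Phi}\tilde S(t^{1/3}\xi_1)\tilde S(t^{1/3}\xi_2)\tilde v(\xi_3)\, d\xi_1 d\xi_2$ and $M(\xi) := \iint e^{it\Phi}\xi_1 \tilde S(t^{1/3}\xi_1)\tilde S(t^{1/3}\xi_2)\tilde v(\xi_3)\, d\xi_1 d\xi_2$. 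The first term equals $(4\pi^2/i)\,N[\tilde S,\tilde S,\tilde v](t,\xi)$, so Lemma \ref{lem:N_E} immediately yields $|\xi J_0(\xi)| \lesssim \frac{1}{t}\|S\|_{\q E(t)}^2\|v\|_{\q E(t)}$. The whole task therefore reduces to bounding $M$ by the same quantity.

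For $M$, I would rescale to time $t=1$: setting $\eta_j := t^{1/3}\xi_j$, $V(\eta) := \tilde S(1,\eta)$ and $W(\eta) := \tilde v(t,t^{-1/3}\eta)$, the self-similar identity $\tilde S(t,\xi) = V(t^{1/3}\xi)$ together with a change of variable in the double integral yields
\[
M(\xi) = t^{-1}\, K_1(t^{1/3}\xi), \qquad K_1(\eta) := \iint e^{i\Phi_\eta}\,\eta_1 V(\eta_1)V(\eta_2)W(\eta_3)\, d\eta_1 d\eta_2,
\]
where $\eta_3 := \eta - \eta_1 - \eta_2$ and $\Phi_\eta := \eta^3 - \eta_1^3 - \eta_2^3 - \eta_3^3$. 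Scale invariance of the $\q E$ norm gives $\|V\|_{\q E(1)} = \|S\|_{\q E(t)}$ and $\|W\|_{\q E(1)} = \|v\|_{\q E(t)}$, so the desired estimate reduces to proving $|K_1(\eta)| \lesssim \|V\|_{\q E(1)}^2\|W\|_{\q E(1)}$ uniformly in $\eta \in \m R$.

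The bound on $K_1$ I would establish by adapting the stationary-phase / integration-by-parts blueprint underlying Lemma \ref{lem:N_E} (see \cite[Lemma~7]{CCV20}). The crucial observation is that, since $\partial_{\eta_j}\Phi_\eta = 3(\eta_3^2 - \eta_j^2)$, the stationary points of $\Phi_\eta$ in $(\eta_1,\eta_2)$ are the diagonal $(\eta/3,\eta/3)$ and the three resonant points $(\eta,-\eta),(-\eta,\eta),(\eta,\eta)$; at all four of them $|\eta_1| \le |\eta|$. Hence on a neighborhood of each stationary point the extra weight $\eta_1$ is of order $|\eta|$ and produces precisely the same contribution (up to constants) as the one yielded by the external $\eta$ factor in the $N[V,V,W](1,\eta)$ bound of Lemma \ref{lem:N_E}. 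In the non-stationary regions I would split $\m R^2$ dyadically according to the relative sizes of $|\eta|,|\eta_1|,|\eta_2|$, as in the proof of Lemma \ref{lem:termos_com_z}, and integrate by parts in $\eta_1$ or $\eta_2$; bulk terms where the derivative falls on $V$ or $W$ are controlled by $\|\partial_\eta V\|_{L^2} \le \|V\|_{\q E(1)}$ and $\|\partial_\eta W\|_{L^2(\m R\setminus\{0\})} \le \|W\|_{\q E(1)}$, while the extra weight $|\eta_1|$ is absorbed into the gain from the phase derivative.

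The main obstacle lies in the regime $|\eta_1| \gg |\eta|$: there $V(\eta_1)$ gives no decay and the weight grows unboundedly, so one must carefully choose the IBP direction and balance the gain $1/|\partial_{\eta_j}\Phi_\eta|$ (which, according to the sub-region, behaves like $(|\eta_1||\eta-\eta_2|)^{-1}$ or $\eta_1^{-2}$) against the extra factor $|\eta_1|$, closing the argument via Cauchy--Schwarz against $\|\partial_\eta V\|_{L^2}$. The boundary contributions at the jumps $\eta_2=0$ or $\eta_3=0$ of $W$ (and of $V$, when the underlying self-similar solution has $\alpha\neq 0$) must also be handled with the same care as in Lemma \ref{lem:termos_com_z}; this case-analysis is the technical heart of the proof.
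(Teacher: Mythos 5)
The algebraic preprocessing is fine: the identity $\xi_3 = \xi - \xi_1 - \xi_2$ does give $J = \xi J_0 - 2M$, the factor $\xi J_0$ is $\frac{4\pi^2}{i} N[\tilde S,\tilde S,\tilde v]$ and is handled by Lemma~\ref{lem:N_E}, and the self-similar rescaling to time $t=1$ is legitimate because every norm involved is scale-invariant. The problem is Step~4, which is the entire analytic content, and there the argument does not close.

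Concretely, consider the region $|\eta_1| \ge 10\max(|\eta|,|\eta_2|)$ with $\eta_2$ ranging over all of $\m R$. There $|\partial_{\eta_2}\Phi_\eta| \approx \eta_1^2$, so after one IBP in $\eta_2$ the effective factor is $\eta_1/|\partial_{\eta_2}\Phi_\eta| \approx 1/|\eta_1|$. The bulk term where the derivative lands on $V(\eta_2)$ is then of size
\[
\iint_{|\eta_1|\gtrsim\max(|\eta|,|\eta_2|)}\frac{1}{|\eta_1|}\,|V(\eta_1)|\,|\partial_\eta V(\eta_2)|\,|W(\eta_3)|\,d\eta_1\,d\eta_2,
\]
and at fixed $\eta_2$ the $\eta_1$-integral $\int_{|\eta_1|\gtrsim R}d\eta_1/|\eta_1|$ diverges logarithmically, since $V\in\q E(1)$ gives only $L^\infty$ control on $V(\eta_1)$. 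The alternative term where the derivative lands on $W(\eta_3)$ can be Cauchy--Schwarzed in $\eta_1$, but this leaves $\int (1+|\eta_2|)^{-1/2}|V(\eta_2)|\,d\eta_2$, which again diverges. Switching to an IBP in $\eta_1$ does not help either, because $\partial_{\eta_1}\Phi_\eta = 3(\eta_3-\eta_1)(\eta_3+\eta_1)\approx \eta_1(\eta-\eta_2)$, which degenerates on the codimension-one set $\eta_2\approx\eta$; one cannot avoid this set, since $\eta_2$ is free. So the ``balance $|\eta_1|$ against the phase gain via Cauchy--Schwarz'' heuristic is exactly where the estimate breaks down, and the proposal gives no mechanism to repair it.

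The paper's proof does not encounter this region at all, because it changes variables to $\eta=\xi_1+\xi_2$, $\mu = t^{1/3}(\xi_1-\xi_2)$ \emph{before} doing any IBP, and packages the two $\tilde S$ factors into the one-variable kernel
\[
K(S,S)(t^{1/3}\eta) = \int e^{3i t^{1/3}\eta\mu^2/4}\,\tilde{S}\Bigl(\tfrac{t^{1/3}\eta+\mu}{2}\Bigr)\tilde{S}\Bigl(\tfrac{t^{1/3}\eta-\mu}{2}\Bigr)\,d\mu
\]
of \cite[Lemma~14]{CCV19}, which satisfies $|K(S,S)(t^{1/3}\eta)|\lesssim t^{-1/6}|\eta|^{-1/2}\|S\|_{\q E(t)}^2$ with a matching bound on $\partial_\eta K$. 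The stationary phase in the difference variable $\mu$ is what converts two merely bounded $\tilde S$ factors into a single function with $|\eta|^{-1/2}$ decay; that decay is precisely what controls the high-$|\xi_1+\xi_2|$ region that your 2D IBP cannot handle. Reducing to the resulting one-dimensional integral $\frac{1}{t^{1/3}}\int e^{it\Psi}(\xi-\eta)\tilde v(\xi-\eta)K(S,S)(t^{1/3}\eta)\,d\eta$, with only two stationary points $\eta=2\xi$ and $\eta=2\xi/3$, is the core of the paper's argument, and this ingredient is absent from your proposal.

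A secondary remark: your decomposition moves the weight from the $\tilde v$ slot (as in the statement) to the $\tilde S$ slot ($\eta_1 V(\eta_1)$). This is not a gain---a weight on the self-similar factor is if anything harder to control, given that $\tilde S$ has no decay at high frequency and only an $L^2$ bound on $\partial_\xi\tilde S$---and the paper estimates the original integral (weight on $\tilde v$) directly.
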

\begin{nb}
	This estimate involves only the critical norm $\q E(t)$. Consequently, the $1/t$ decay is optimal.
\end{nb}

\begin{proof}
Before we perform the estimate, we rewrite the oscillatory integral. Set $\eta=\xi_1+\xi_2$, $\nu=\xi_1-\xi_2$ and
\[
\Psi=-3\left(\eta\xi^2 - \eta^2\xi+\frac{1}{4}\eta^3\right).
\]
Then a simple computation yields
\begin{align*}
\MoveEqLeft I(t,\xi) : = \int e^{it\Phi}\tilde{S}(t^{1/3}\xi_1)\tilde{S}(t^{1/3}\xi_2)\xi_3\tilde{v}(\xi_3)d\xi_1 d\xi_2 \\
& = \int e^{it\Psi} (\xi- \eta) \tilde{v}(\xi - \eta)\left(\int e^{3it\eta\nu^2/4}\tilde{S}\left(t^{1/3}\frac{\eta+\nu}{2}\right)\tilde{S}\left(t^{1/3}\frac{\eta-\nu}{2}\right)d\nu\right)d\eta \\
& = \frac{1}{t^{1/3}}\int e^{it\Psi}\xi_3 \tilde{v}(\xi_3)\left(\int e^{3i t^{1/3} \eta\mu^2/4}\tilde{S}\left(\frac{t^{1/3}\eta+\mu}{2}\right)\tilde{S}\left(\frac{t^{1/3}\eta-\mu}{2}\right)d\mu\right)d\eta \\
& = : \frac{1}{t^{1/3}}\int e^{it\Psi}(\xi-\eta) \tilde{v}(\xi-\eta) K(S,S)(t^{1/3}\eta) d\eta
\end{align*}
where the function 
\[ K(S,S)(\sigma) : = \int e^{3i \sigma \mu^2/4}\tilde{S}\left(\frac{\sigma+\mu}{2}\right)\tilde{S}\left(\frac{\sigma-\mu}{2}\right)d\mu \]
has been studied in \cite[Lemma 14]{CCV19} and satisfies the bounds
\[
|K(S,S)(t^{1/3}\eta)|\lesssim \frac{\|S\|_{\mathcal{E}(t)}^2}{t^{1/6}|\eta|^{1/2}},\quad \left|\partial_\eta \left(K(S,S)(t^{1/3}\eta)\right)\right|\lesssim \frac{\|S\|_{\mathcal{E}(t)}^2}{t^{1/6}|\eta|^{3/2}}.
\]
As
\[
\frac{\partial \Psi}{\partial \eta} = -\frac{3}{4}(\eta-2\xi)(3\eta-2\xi),
\]
the phase $\Psi$ has two stationary points, $\eta=2\xi$ and $\eta = 2\xi/3$. We use the same notation as in the previous proof: given a set $\Delta\subset\R$, $I(\Delta)$ is the restriction of $I$ to $\Delta$. We perform a stationary phase analysis in the complementary regions
\begin{gather*}
\Delta_1 =  \{ \eta \in \m R : |\eta| \le 10 |\xi| \}  \quad \text{and} \quad  \Delta_2 = \{ \eta \in \m R : |\eta| \ge 10 |\xi| \}.
\end{gather*}

\

\emph{Step 1.} We first consider the case $|t^{1/3} \xi| \le 1$.

(i) On $\Delta_1$, $|\xi - \eta| \lesssim |\xi|$ so that
\[ |I(\Delta_1)| \lesssim  \frac{1}{t^{1/2}}\int_{|\eta|\lesssim |\xi|} \frac{1}{|\eta|^{1/2}}|\xi-\eta|d\eta \| S \|^2_{\q E(t)} \| \tilde v \|_{L^\infty} \lesssim \frac{|\xi|^{3/2}}{t^{1/2}}  \| S \|^2_{\q E(t)} \| \tilde v \|_{L^\infty} \lesssim \frac{1}{t}  \| S \|_{\q E(t)}^2   \| v \|_{\q E(t)}. \]

Now, we further split $\Delta_2 = \Delta_2^1 \cup \Delta_2^2$ where
\[ \Delta_2^1 = \{ \eta \in \m R : |\eta| \ge 10 |\xi|, \ |t^{1/3} \eta| \le 1 \} \quad \text{and} \quad \Delta_2^2 = \{ \eta \in \m R : |\eta| \ge 10 |\xi|, \ |t^{1/3} \eta| \ge 1 \}. \]

(ii) We first focus on $\Delta_2^1$. On $\Delta_2$, we have  $|\xi - \eta| \lesssim |\eta|$, so that
\begin{align*}
|I(\Delta_2^1) | & \lesssim \frac{1}{t^{1/2}} \int_{|\eta| \le t^{-1/3}} \frac{|\xi-\eta|}{|\eta|^{1/2}} d\eta \| S \|^2_{\q E(t)} \| \tilde v \|_{L^\infty} \lesssim \frac{1}{t^{1/2}} \int_{|\eta| \le t^{-1/3}} |\eta|^{1/2} d\eta \| S \|^2_{\q E(t)} \| v \|_{\q E(t)} \\
& \lesssim  \frac{1}{t} \| S \|^2_{\q E(t)} \| v \|_{\q E(t)}.
\end{align*}

(iii) In  $\Delta_2^2$, we perform an integration by parts, to get
\begin{align*}
|I(\Delta_2^2)| & \le  \left|\frac{1}{t^{1/3}} \int_{\Delta_2} e^{it\Psi}\frac{\partial}{\partial \eta}\left(\frac{1}{it\partial_\eta \Psi}K(S,S)(t^{1/3}\eta)(\xi-\eta) \tilde{v}(\xi-\eta) \right)d\eta\right| \\
& \qquad+ \left|\frac{1}{t^{1/3}} \left[ e^{it\Psi}\frac{1}{it\partial_\eta \Psi}K(S,S)(t^{1/3}\eta)\xi_3\tilde{v}(\xi_3)\right]_{\eta \in \partial \Delta_2}\right|.
\end{align*}
Since, on $\Delta_2$, we have  $|\xi - \eta| \lesssim |\eta|$, $|\partial_\eta \Psi| \gtrsim |\eta|^2$, $|\partial_{\eta \eta}^2 \Psi| \lesssim |\eta|$, and  $|\eta| \ge t^{-1/3}$, we can bound the  first term by
\begin{align*}
\MoveEqLeft \frac{1}{t^{1/3}} \int_{|\eta| \ge t^{-1/3}}  \left(  \frac{|\eta|}{t |\eta|^4} \frac{\| S \|_{\q E(t)}^2}{t^{1/6} |\eta|^{1/2}} |\eta| \| \tilde v \|_{L^\infty} + \frac{1}{t |\eta|^2} \frac{\| S \|_{\q E(t)}^2}{t^{1/6} |\eta|^{3/2}} |\eta| \| \tilde v \|_{L^\infty} \right. \\
& \qquad \left.  + \frac{1}{t|\eta|^2} \frac{\| S \|_{\q E(t)}^2}{t^{1/6} |\eta|^{1/2}} |\xi-\eta| | \partial_\eta \tilde v(\xi-\eta)| \right) d\eta \\
& \lesssim \frac{1}{t} \| S \|_{\q E(t)}^2 \|  \| \tilde v \|_{L^\infty} + \frac{1}{t^{4/3}}  \| S \|_{\q E(t)}^2 \left( \int_{|\eta| \ge t^{-1/3}} \frac{d\eta}{|\eta|^3} \right)^{1/2} \frac{1}{t^{1/6}} \| \partial_\xi \tilde v \|_{L^2} \\
& \lesssim \frac{1}{t}  \| S \|_{\q E(t)}^2   \| v \|_{\q E(t)}.
\end{align*}
(on the second line, we used the Cauchy-Schwarz inequality for the last term). For the boundary term, we notice that on $\partial \Delta_2$,  $|\eta| \ge t^{-1/3}$, so that it is bounded by
\[ \frac{1}{t^{1/3}} \left[ \frac{1}{t |\eta|^2} \frac{1}{t^{1/6}|\eta|^{1/2}} \| S \|_{\q E(t)}^2 |\eta| \| \tilde{v} \|_{L^\infty} \right]_{|\eta| = \frac{1}{t^{1/3}}} \lesssim  \frac{1}{t}  \| S \|_{\q E(t)}^2   \| v \|_{\q E(t)}. \]
This proves that $| I(\Delta_2^2) | \lesssim t^{-1} \| S \|_{\q E(t)}^2   \| v \|_{\q E(t)}$.

\bigskip

\emph{Step 2.} We now consider the case $|t^{1/3} \xi | \ge 1$.

Here, we split $\Delta_1 = \Delta_1^1 \cup \Delta_1^2 \cup \Delta_1^3$ where
\begin{align*}
\Delta_1^1 & = \{ \eta \in \m R : |\eta| \le 10 |\xi|, \ |t \xi^3| \ge 1, \min(|\eta - 2\xi|,|\eta-2\xi/3|) \ge |\xi|/10 \}, \\
\Delta_1^2 & = \{ \eta \in \m R : |\eta-2\xi| \le |\xi|/10, \ |t \xi^3| \ge 1\}, \\
\Delta_1^3 & = \{ \eta \in \m R : |\eta-2\xi/3| \le |\xi|/10, \ |t \xi^3| \ge 1\}.
\end{align*}

\bigskip
 
(i) On $\Delta_1^1$, we perform an integration by parts so that
\begin{align}
|I(\Delta_1^1) | & \le  \left|\frac{1}{t^{1/3}}\int_{\Delta_1^1} \eta e^{it\Psi}\frac{\partial}{\partial \eta}\left(\frac{1}{1+it\eta\partial_\eta \Psi}K(S,S)(t^{1/3}\eta)(\xi-\eta)\tilde{v}(\xi-\eta)) \right)d\eta\right| \nonumber \\
& \qquad+ \left|\frac{1}{t^{1/3}} \left[\eta e^{it\Psi}\frac{1}{1+it\eta\partial_\eta \Psi}K(S,S)(t^{1/3}\eta)(\xi-\eta)\tilde{v}(\xi-\eta)\right]_{\eta \in \partial \Delta_1^1}\right| \label{est:I11}
\end{align}
Observe that, on $\Delta_1^1$, $|\partial_\eta \Psi| \gtrsim |\xi|^2$ and $|\partial_{\eta \eta}^2 \Psi| \lesssim |\xi|$. In particular $|\partial_\eta(1+it\eta\partial_\eta \Psi)| \lesssim t |\xi|^2$. Therefore, the integral term is bounded by
\begin{align}
\MoveEqLeft \frac{1}{t^{1/3}} \int_{|\eta| \lesssim |\xi|} \left( \frac{t |\eta \xi^2|}{1 + (t\eta \xi^2)^2} \frac{\| S \|_{\q E(t)}^2}{t^{1/6} |\eta|^{1/2}} |\xi| \| \tilde v\|_{L^\infty}  + \frac{|\eta|}{1+ |t \eta \xi^2|} \frac{\| S \|_{\q E(t)}^2}{t^{1/6} |\eta|^{3/2}}  |\xi| \| \tilde v\|_{L^\infty} \right. \nonumber \\
& \left. + \frac{|\eta|}{1+ |t \eta \xi^2|} \frac{\| S \|_{\q E(t)}^2}{t^{1/6} |\eta|^{1/2}}  \| \tilde v\|_{L^\infty}  \right) d\eta + \frac{1}{t^{1/3}} \int_{|\eta| \lesssim |\xi|} \frac{|\eta|}{1+ |t \eta \xi^2|} \frac{\| S \|_{\q E(t)}^2}{t^{1/6} |\eta|^{1/2}}  |\xi| |\partial_\xi \tilde v(\xi-\eta)| d\eta 
\label{est:I11_int}
\end{align}
Letting $\rho = t \eta \xi^2$ so that $\ds \frac{d\eta}{|\eta|^{1/2}} = \frac{d\rho}{t^{1/2} |\xi| |\rho|^{1/2}}$, (and using again $|\eta| \lesssim |\xi|$), the first three terms are bounded as follows
\begin{align*}
\frac{1}{t^{1/3}} \int_{\m R} \left( \frac{|\rho|}{1+\rho^2} \frac{|\xi|}{t^{1/6}} + \frac{1}{1+|\rho|} \frac{|\xi|}{t^{1/6}} \right) \frac{d\rho}{t^{1/2} |\xi| |\rho|^{1/2}} \| S \|_{\q E(t)}^2 \| \tilde v\|_{L^\infty} \lesssim \frac{1}{t} \| S \|_{\q E(t)}^2 \| v\|_{\q E(t)}.
\end{align*}
We bound the fourth and last terms of \eqref{est:I11_int} by
\begin{align*}  
\MoveEqLeft \frac{1}{t^{1/3}} \left( \int_{|\eta| \lesssim |\xi|} \frac{|\eta \xi^2|}{1+ |t \eta \xi^2|^2} d\eta \right)^{1/2} \| S \|_{\q E(t)}^2 \frac{1}{t^{1/6}} \| \partial_\xi \tilde v \|_{L^2} \\
& \lesssim \frac{1}{t^{1/3}}  \left( \int_{\m R} \frac{1}{t^{1/3}} \frac{|t\eta \xi^2|^{1/3} \xi^2}{1+ |t \eta \xi^2|^2} d\eta \right)^{1/2}  \| S \|_{\q E(t)}^2  \| v \|_{\q E(t)} \\
& \lesssim \frac{1}{t^{1/2}} \left( \int \frac{|\rho|^{1/3} \xi^2}{1+\rho^2} \frac{d\rho}{t \xi^2} \right)^{1/2}  \| S \|_{\q E(t)}^2  \| v \|_{\q E(t)} \lesssim \frac{1}{t} \| S \|_{\q E(t)}^2 \| v\|_{\q E(t)}.
\end{align*}
We now focus on the boundary term in \eqref{est:I11}: on $\partial \Delta_1^1$, $|\eta| \gtrsim |\xi|$, so it can be bounded by
\[ \frac{1}{t^{1/3}} \left[ \frac{|\xi|}{1+t |\xi|^3} \frac{ \| S \|_{\q E(t)}^2}{t^{1/6} |\xi|^{1/2}} |\xi| \| \tilde v \|_{L^\infty} \right] \lesssim \frac{1}{t} \frac{|t\xi^3|^{1/2}}{1+ t\xi^3}  \| S \|_{\q E(t)}^2 \| v \|_{\q E(t)}  \lesssim \frac{1}{t} \| S \|_{\q E(t)}^2 \| v \|_{\q E(t)}.   \]
This proves that $|I(\Delta_1^1)| \lesssim t^{-1} \| S \|_{\q E(t)}^2 \| v \|_{\q E(t)}$.

\bigskip

(ii) For $\Delta_1^2$, we are near the stationary point $2\xi$. Denote $q = \eta - 2\xi$. An integration by parts yields
\begin{align}
|I(\Delta_1^2)| &\lesssim \left|\frac{1}{t^{1/3}}\int_{\Delta_1^2} q e^{it\Psi}\frac{\partial}{\partial \eta}\left(\frac{1}{1+itq\partial_\eta \Psi}K(S,S)(t^{1/3}\eta)(\xi-\eta)\tilde{v}(\xi-\eta) \right)d\eta\right|  \label{est:I12} \\
& \qquad+ \left|\frac{1}{t^{1/3}} \left[(\eta - 2\xi) e^{it\Psi}\frac{1}{1+it (\eta-2\xi)\partial_\eta \Psi}K(S,S)(t^{1/3}\eta) (\xi-\eta) \tilde{v}(\xi-\eta)\right]_{\eta \in \partial \Delta_1^2}\right|
\end{align}
On $\Delta_1^2$, $|q\xi| \lesssim |\partial_\eta \Psi| \lesssim |q \xi|$, $|\partial_{\eta \eta}^2 \Psi| \lesssim |\xi|$, $|\xi - \eta| \lesssim |\xi|$, $|\xi| \lesssim |\eta| \lesssim |\xi|$. Hence, we can bound the integral term in \eqref{est:I12} by
\begin{align*}
\MoveEqLeft \frac{1}{t^{1/3}}\int_{\Delta_1^2} \left( \frac{t q^2 |\xi|}{1+ (tq^2|\xi|)^2} \frac{\| S \|_{\q E(t)}^2 }{t^{1/6} |\xi|^{1/2}} |\xi| \| \tilde v \|_{L^\infty} +  \frac{q}{1+ |tq^2 \xi|} \frac{\| S \|_{\q E(t)}^2}{t^{1/6} |\xi|^{3/2}}  |\xi| \| \tilde v \|_{L^\infty} \right.  \\
& \qquad \left. + \frac{|q|}{1+ |tq^2 \xi|} \frac{\| S \|_{\q E(t)}^2}{t^{1/6} |\xi|^{1/2}}  |\xi| |\partial_\xi \tilde v(\xi -\eta)| \right) d\eta
\end{align*}
Let here $\ds \rho = t q^2 |\xi|$, $\ds d\eta = \frac{d\rho}{2|t \xi \rho |^{1/2}}$  so that the first term is bounded by
\[  \frac{1}{t^{1/3}}\int_{\m R} \frac{\rho}{1+\rho^2} \frac{|\xi|^{1/2}}{t^{1/6}} \frac{d\rho}{|t \xi \rho|^{1/2}}\| S \|_{\q E(t)}^2 \| \tilde v \|_{L^\infty}  \lesssim \frac{1}{t} \| S \|_{\q E(t)}^2 \| \tilde v \|_{\q E(t)}. \]
For the second term, notice that $\ds  \frac{q}{1+ |tq^2 \xi|} \le \frac{1}{|t \xi|^{1/2}}$, and as the measure $|\Delta_1^2| \lesssim |\xi|$, it is bounded by
\[  \frac{|\Delta_1^2|}{t^{1/3}}  \frac{1}{|t \xi|^{1/2}}  \frac{1}{t^{1/6} |\xi|^{1/2}} \| S \|_{\q E(t)}^2 \| \tilde v \|_{L^\infty} \lesssim \frac{1}{t} \| S \|_{\q E(t)}^2 \| \tilde v \|_{\q E(t)}. \]
For the third term, using the Cauchy-Schwarz inequality and then the fact that  on $\Delta_1^2$, $|q|\lesssim |\xi|$,
\begin{align*}
\MoveEqLeft \frac{1}{t^{1/3}}  \frac{\| S \|_{\q E(t)}^2}{t^{1/6}}  |\xi|^{1/2}  \left( \int_{\Delta_1^2} \frac{q^2}{1+(tq^2 \xi)^2} d\eta \right)^{1/2} \| \partial_\xi \tilde v \|_{L^2} \\
& \lesssim \frac{|\xi|^{1/2}}{t^{1/3}}  \left( \int_{\m R} \frac{|q|^{5/3} |\xi|^{1/3}}{1+(tq^2 \xi)^2} d\eta \right)^{1/2}  \| S \|_{\q E(t)}^2 \| v \|_{\q E(t)} \\
& \lesssim \frac{|\xi|^{1/2}}{t^{1/3}}  \left( \int_{\m R} \frac{1}{t^{5/6} |\xi|^{1/2}}  \frac{|\rho|^{5/6}}{1+|\rho|^2} \frac{d\rho}{|t \xi \rho|^{1/2}} \right)^{1/2}  \| S \|_{\q E(t)}^2 \| v \|_{\q E(t)} \lesssim \frac{1}{t} \| S \|_{\q E(t)}^2 \|  v \|_{\q E(t)}. 
\end{align*}
We observe that on the last line, the $\xi$ cancel out and the $\rho$ integral is convergent, leaving the desired power of $t$.

For the boundary term in \eqref{est:I12}, notice that on $\partial \Delta_1^2$, $\eta$, $\xi-\eta$ and $\eta - 2\xi$ are of the order of $\xi$, and $|\partial_\eta \Psi| \gtrsim |\xi|^2$, so that we can bound it by
\[ \frac{1}{t^{1/3}} |\xi| \frac{1}{1+ t|\xi|^3} \frac{\| S \|_{\q E(t)}^2}{t^{1/6}|\xi|^{1/2}} |\xi| \| \tilde v\|_{L^ \infty} \lesssim \frac{1}{t^{1/2}} \frac{|\xi|^{3/2}}{1+t|\xi|^3} \| S \|_{\q E(t)}^2 \| \tilde v\|_{L^ \infty} \lesssim \frac{1}{t} \| S \|_{\q E(t)}^2 \| \tilde v\|_{L^ \infty}, \]
because $\ds \frac{|\xi^{3/2}}{ 1 + t |\xi|^3} \le \frac{1}{t^{1/2}}$. We conclude that $\ds |I(\Delta_1^2)| \lesssim t^{-1} \| S \|_{\q E(t)}^2 \| v \|_{\q E(t)}$.

\bigskip

(iii) On $\Delta_1^3$, we are near the stationary point $2\xi/3$: letting $q = \eta - 2\xi/3$, completely similar computations as in the $\Delta_1^2$ case give the same bound:
\[ |I(\Delta_1^3)| \lesssim  t^{-1} \| S \|_{\q E(t)}^2   \| v \|_{\q E(t)}. \]

\bigskip

(iv) On $\Delta_2$,  $|\eta| \ge 10 |\xi| \ge 10 t^{-1/3}$, so that we can repeat the same computations as for $\Delta_2^2$ in Step 1:
\begin{align*}
|I(\Delta_2) | & \lesssim  \frac{1}{t} \| S \|^2_{\q E(t)} \| v \|_{\q E(t)}.
\end{align*}

\bigskip

Summing up, we conclude that $| I | \lesssim t^{-1} \| S \|_{\q E(t)}^2   \| v \|_{\q E(t)}$, as claimed.
\end{proof}

\begin{lem}\label{lem:estE}
	Given $t\in(0,1)$, $z\in \mathcal{Z}$ and $v\in \mathcal{E}(0,1)$, 
\begin{align*}
|v(t,x)| & \lesssim \frac{1}{t^{1/3}\jap{x/t^{1/3}}^{1/4}} \|v(t)\|_{\mathcal{E}(t)}, & \quad |e^{-t \partial_x^3} z(x)| & \lesssim \min\left(1,\frac{1}{t^{1/3}\jap{x/t^{1/3}}^{1/4}} \right) \vvvert z\vvvert, \\
|\partial_x v(t,x)| & \lesssim \frac{1}{t^{2/3}} \jap{x/t^{1/3}}^{1/4} \|v(t)\|_{\mathcal{E}(t)}, & \quad |\partial_x e^{-t \partial_x^3} z(x)| & \lesssim  \vvvert z\vvvert, 
\end{align*}
In particular, $u=v+e^{-t\partial_x^3}z$ satisfies
	\begin{align*}
	\|uu_x(t)\|_{L^\infty} & \lesssim \frac{1}{t} \|v(t)\|_{\mathcal{E}(t)} (\|v(t)\|_{\mathcal{E}(t)} + \nz{z})  + \vvvert z\vvvert^2, \\
	\|u(t)\|_{L^\infty} & \lesssim \frac{1}{t^{1/3}} \|v(t)\|_{\mathcal{E}(t)} + \vvvert z\vvvert, \\
	\text{and} \qquad 
	\|u(t)\|_{L^6} & \lesssim \frac{1}{t^{5/18}} \|v(t)\|_{\mathcal{E}(t)} + \vvvert z\vvvert.
	\end{align*}
\end{lem}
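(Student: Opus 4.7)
My proof proceeds in two stages: first the four pointwise estimates on $v$, $\partial_x v$, $e^{-t\partial_x^3}z$, and $\partial_x e^{-t\partial_x^3}z$, then the three $L^p$-type consequences by algebraic combination.

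\emph{Pointwise estimates.} Writing $v(t,x) = \int e^{i(x\xi - t\xi^3)}\tilde v(t,\xi)\,d\xi$, rescale via $\eta = t^{1/3}\xi$, $y = x/t^{1/3}$ to get
\[
v(t, t^{1/3}y) = \frac{1}{t^{1/3}}\int e^{i(y\eta - \eta^3)}V(\eta)\,d\eta, \quad V(\eta) := \tilde v(t, \eta/t^{1/3}).
\]
The change of variable preserves the $L^\infty$ norm, and the $t^{-1/6}$ weight built into the definition of $\mathcal{E}(t)$ yields $\|V\|_{L^\infty} + \|\partial_\eta V\|_{L^2(\R\setminus\{0\})} \lesssim \|v\|_{\mathcal{E}(t)}$. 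The first two bounds thus reduce to
\[
\left|\int e^{i(y\eta - \eta^3)}V(\eta)d\eta\right| \lesssim \jap{y}^{-1/4},\quad \left|\int \eta e^{i(y\eta - \eta^3)}V(\eta)d\eta\right| \lesssim \jap{y}^{1/4},
\]
which come from a stationary phase analysis on the Airy phase $\phi(\eta) = y\eta - \eta^3$: for $y \lesssim 1$, IBP on $|\eta|\gtrsim 1$ combined with a trivial bound on the compact complement; for $y \gg 1$, use van der Corput near the stationary points $\eta_\ast = \pm\sqrt{y/3}$ to gain $|\phi''(\eta_\ast)|^{-1/2} \sim \jap{y}^{-1/4}$ (the extra factor $\eta$ being absorbed by $|\eta_\ast| \sim \jap{y}^{1/2}$), and IBP in the complement. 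The $L^2$ control of $\partial_\eta V$ enters via Cauchy--Schwarz in the stationary-point block, and the possible jump of $V$ at $\eta=0$ produces only boundary terms controlled by $\|V\|_{L^\infty}$; this is essentially the argument of \cite[Lemma~6]{CCV20}. The estimates on $e^{-t\partial_x^3}z$ are obtained identically with $V(\eta) = \hat z(\eta/t^{1/3})$, while the uniform bounds $|e^{-t\partial_x^3}z(x)|\le \|\hat z\|_{L^1}\le \nz{z}$ and $|\partial_x e^{-t\partial_x^3}z(x)|\le \|\xi\hat z\|_{L^1}\le\nz{z}$ are immediate.

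\emph{Consequences.} Setting $y := x/t^{1/3}$, the $L^\infty$ bound on $u$ follows by triangle inequality. For $uu_x$, expand into four terms: the core $v\partial_x v$ gives $t^{-1}\|v\|_{\mathcal{E}(t)}^2$ by weight cancellation, $\jap{y}^{-1/4}\jap{y}^{1/4} = 1$; the cross term $e^{-t\partial_x^3}z\cdot\partial_x v$ uses the refined $\min(1, t^{-1/3}\jap{y}^{-1/4})$ bound: if $t^{-1/3}\jap{y}^{-1/4} \le 1$ the product is $t^{-1/3}\jap{y}^{-1/4} \cdot t^{-2/3}\jap{y}^{1/4} = t^{-1}$, otherwise $\jap{y}^{1/4} \le t^{-1/3}$ so the product is at most $t^{-2/3}\cdot t^{-1/3} = t^{-1}$; the two remaining terms contribute $t^{-1/3}\|v\|_{\mathcal{E}(t)}\nz{z} \le t^{-1}\|v\|_{\mathcal{E}(t)}\nz{z}$ (using $t \le 1$) and $\nz{z}^2$, adding up to the claimed bound. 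For the $L^6$ norm, integrating the pointwise bound on $v$ with $dx = t^{1/3}dy$ yields
\[
\|v\|_{L^6}^6 \lesssim t^{-2}\cdot t^{1/3}\int_{\R}\jap{y}^{-3/2}dy\cdot \|v\|_{\mathcal{E}(t)}^6 \lesssim t^{-5/3}\|v\|_{\mathcal{E}(t)}^6,
\]
hence $\|v\|_{L^6} \lesssim t^{-5/18}\|v\|_{\mathcal{E}(t)}$. For $e^{-t\partial_x^3}z$, interpolate $\|f\|_{L^6}\le \|f\|_{L^2}^{1/3}\|f\|_{L^\infty}^{2/3}$ and combine with the $L^2$-isometry of the Airy propagator and $\|z\|_{L^2}^2 \le (2\pi)^{-1}\|\hat z\|_{L^1}\|\hat z\|_{L^\infty}\le \nz{z}^2$ to obtain $\|e^{-t\partial_x^3}z\|_{L^6}\lesssim \nz{z}$.

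\emph{Main obstacle.} The only substantive technical piece is the stationary phase bound on $\int e^{i(y\eta-\eta^3)}V(\eta)d\eta$ when $V$ is merely $L^\infty \cap \dot H^1$-regular (no absolute integrability, with a possible jump at the origin). Classical van der Corput expects bounded variation, which is not available; the $L^2$ bound on $\partial_\eta V$ must be exploited through Cauchy--Schwarz in the stationary-point block, and the boundary terms arising from IBP across $\{\eta = 0\}$ must be tracked. This scheme has been carried out in \cite{CCV19, CCV20}; what remains here is bookkeeping of powers of $t$ and $\jap{y}$, and carefully distributing the $\min$-structure in the $e^{-t\partial_x^3}z$ estimate when computing products.
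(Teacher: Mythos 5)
Your treatment of the pointwise estimates on $v$ and $\partial_x v$, and of the three consequent $L^p$-type bounds, is correct and closely matches the paper (the paper simply cites \cite[Lemma 6]{CCV20} for the former, and your $uu_x$ weight-cancellation and $L^6$ interpolation arguments are essentially the paper's). There is, however, a genuine gap in the decay bound for $e^{-t\partial_x^3}z$. You assert that
\[
|e^{-t\partial_x^3}z(x)|\lesssim \frac{1}{t^{1/3}\jap{x/t^{1/3}}^{1/4}}\nz{z}
\]
is ``obtained identically'' by applying the rescaled stationary-phase lemma to $V(\eta)=\hat z(\eta/t^{1/3})$. This does not work: the stationary-phase bound yields a constant of size $\|V\|_{L^\infty}+\|\partial_\eta V\|_{L^2(\R\setminus\{0\})}$, and while $V(\eta)=\tilde v(t,\eta/t^{1/3})$ gives exactly $\|\tilde v\|_{L^\infty}+t^{-1/6}\|\partial_\xi\tilde v\|_{L^2}=\|v\|_{\mathcal E(t)}$ (the norm $\q E(t)$ was tailor-made to absorb the $t^{-1/6}$ from the change of variable), substituting $V(\eta)=\hat z(\eta/t^{1/3})$ produces $\|\hat z\|_{L^\infty}+t^{-1/6}\|\partial_\xi\hat z\|_{L^2}$, an extra uncontrolled factor $t^{-1/6}$ which would yield at best $t^{-1/2}\jap{x/t^{1/3}}^{-1/4}$ rather than the required $t^{-1/3}\jap{x/t^{1/3}}^{-1/4}$. (Indeed it is not even clear that $\|\partial_\xi\hat z\|_{L^2}\lesssim\nz{z}$, since $\nz{z}$ only controls $\jap{\xi}\partial_\xi\hat z$ in $L^1$.) The paper avoids this entirely by working in physical space: it writes $e^{-t\partial_x^3}z(x)=t^{-1/3}\int \Ai((x-y)/t^{1/3})z(y)\,dy$ and splits the integral into $|x-y|\le|x|$ and $|x-y|\ge|x|$, using the Airy decay $|\Ai(\eta)|\lesssim\jap{\eta}^{-1/4}$ together with $z\in L^1$ and $\jap{x}z\in L^\infty$ (both controlled by $\nz{z}$ since $\hat z,\partial_\xi\hat z\in L^1$). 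You should replace the ``obtained identically'' claim with this convolution argument; the remaining uniform $W^{1,\infty}$ bounds on $e^{-t\partial_x^3}z$ that you give are correct and match the paper.
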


In the above bounds on $u$, observe the gain in powers of $t$ on the $z$ terms.

\begin{proof}
	Since $\jap{\xi} \hat z \in L^1(\R)$, one has directly $e^{-t\partial_x^3}z\in W^{1,\infty}(\R)$. Also, $\hat z \in L^2(\m R)$, so that $e^{-t\partial_x^3}z\in L^2(\m R)$. In particular,
\[ \| e^{-t\partial_x^3}z \|_{L^6} \lesssim \vvvert z\vvvert. \]
Furthermore, by \cite[Lemma 6]{CCV20},
	\[
	|v(t,x)|\lesssim \frac{1}{t^{1/3}\jap{x/t^{1/3}}^{1/4}}\|v(t)\|_{\mathcal{E}(t)},\quad |v_x(t,x)|\lesssim \frac{1}{t^{2/3}}\jap{x/t^{1/3}}^{1/4}\|v(t)\|_{\mathcal{E}(t)}.
	\]
	It remains to prove the pointwise spatial decay for 
	\[
	(e^{-t\partial_x^3}z)(x)=\frac{1}{t^{1/3}}\int \Ai\left(\frac{x-y}{t^{1/3}}\right)z(y)dy.
	\]
	If $|x-y|\le |x|$, then
	\begin{align*}
	|(e^{-t\partial_x^3}z)(x)|&\lesssim \left(\frac{1}{t^{1/3}}\int_{|x-y|\le |x|} \Big\langle\frac{x-y}{t^{1/3}}\Big\rangle^{-1/4}\jap{x}^{-1}dy\right)\|\jap{x}z\|_{L^\infty}\\&\lesssim \left(\frac{|x|}{t^{1/3}}\right)^{3/4}\jap{x}^{-1}\nz{z}\lesssim \frac{1}{t^{1/3}\jap{x/t^{1/3}}^{1/4}}\nz{z}.
	\end{align*}
	If $|x-y| \ge |x|$, then
	\[
	|(e^{-t\partial_x^3}z)(x)|\lesssim \frac{1}{t^{1/3}}\int \Big\langle\frac{x-y}{t^{1/3}}\Big\rangle^{-1/4}|z(y)|dy\lesssim \frac{1}{t^{1/3}\jap{x/t^{1/3}}^{1/4}}\|z\|_{L^1}. 
	\]
	For the $uu_x$ estimate, one notices that
	\[ uu_x = vv_x + ve^{-t\partial_x^3}z_x + v_x e^{-t\partial_x^3}z + (e^{-t\partial_x^3}z)(e^{-t\partial_x^3}z_x). \]
	The first three terms are estimated by observing the cancellation of the $\jap{x/t^{1/3}}^{1/4}$ factor, and the quadratic term in $z$ is bounded by a constant. The other estimates on $u$ are easy consequences.
\end{proof}

\section{Construction of an approximation sequence}\label{sec:aprox}

Now that we possess the necessary estimates, let us begin the construction of the error function $w$ in the same spirit as \cite{CCV20}. Fix $\chi\in\mathcal{S}(\R)$ such that $0<\chi\le 1$ and $\chi\equiv 1$ on $[-1,1]$ and let $\chi_n(\xi)=\chi^2(\xi/n)$.
Given $u\in \mathcal{S}'(\R)$, set
\[
\widehat{\Pi_n u} := \chi_n\hat{u}
\]
and define the approximation space at time $t>0$
\[ X_n(t) := \left\{ u\in \mathcal{S}'(\R): \| u \|_{X_n(t)} < \infty \right\},  \]
where
 \begin{equation}
\|u\|_{X_n(t)} :=\left\|  e^{it\xi^3}\hat u \chi_n^{-1}\right\|_{L^\infty} + \left\| \partial_\xi \left(e^{it\xi^3}\hat{u} \right) \chi_n^{-1/2} \right\|_{L^2}.
\end{equation}
If $I \subset [0,+\infty)$ is an interval,
\[ \| u \|_{X_n(I)} := \sup_{t \in I} \| u(t) \|_{X_n(t)} = \sup_{t \in I} \| \tilde u(t) \chi_n^{-1}\|_{L^\infty}  + \| \partial_\xi \tilde u(t) \chi_n^{-1/2} \|_{L^2}, \]
and 
\[  X_n(I) := \left\{ u\in \q  C(I,\mathcal{S}'(\R )): \tilde u \chi_n^{-1} \in \q  C(I, \q C_b(\R)),\  \partial_\xi \tilde u \chi_n^{-1/2} \in \q  C(I, L^2(\R))  \right\}. \]

Next, we consider a suitable approximation of the error $w$ by cutting off the nonlinearity at large frequencies. However, one must also truncate the self-similar solution to avoid problems with the linear term $(SSw)_x$. In order to keep the self-similar structure, we set
\[
\tilde{S}_n(t,\xi)=\chi_n(t^{1/3} \xi)\tilde{S}(t^{1/3}\xi).
\]
This cut-off is well-behaved in $\mathcal{E}$:
\begin{align*}
\|S_n\|_{\mathcal{E}((0,+\infty))} & = \| S_n \|_{\mathcal{E}(1)} = \| \tilde \chi_n \tilde S \|_{L^\infty} + \| \partial_{\xi} (\chi_n \tilde S) \|_{L^2}  \\
& \lesssim \| \tilde S \|_{L^\infty} + \| \partial_x \tilde S \|_{L^2} + \| \partial_x \chi_n \|_{L^2} \| \tilde S \|_{L^\infty} \\
& \lesssim \| S \|_{\q E(1)}.
\end{align*}
The resulting approximate problem is
\[ \partial_t u_n + \partial^3_x u_n = \Pi_n \partial_x(u_n^3), \quad u_n(t) - S_n(t) \to z \quad \text{as } t \to 0^+. \]
Writing
\[ u_n(t)= : S_n(t) + e^{-t\partial_x^3}z + w_n(t), \]
one arrives at the (equivalent) problem for the error $w_n$
\begin{gather}\label{eq:w_n}
\partial_tw_n +  \partial^3_x w_n = \Pi_n \partial_x(u_n^3-S_n^3), \quad w_n(0)=0.
\end{gather}
Observe that, in frequency space, this equation reads as
\[
\partial_t \tilde w_n = \chi_n (N[\tilde u_n] - N[\tilde S_n]).
\]
\begin{prop}\label{lem:existwn}
	Given $z\in \mathcal{Z}$ and a self-similar solution $S\in \mathcal{E}((0,+\infty))$, there exists $T_n=T_n(z,S)>0$ and a unique maximal solution $w_n\in X_n([0,T_n))$ of \eqref{eq:w_n}, in the sense that if $T_n < +\infty$, then $\| w_n \|_{X_n(t)} \to +\infty$ as $t \to T_n^-$. Moreover, there exists $0 <T^0_n<T_n$ such that
	\[
	\forall t \in [0,T_n^0], \quad \|w_n\|_{X_n(t)}\lesssim_n t^{1/3}.
	\]
\end{prop}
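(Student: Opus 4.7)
The plan is to construct $w_n$ as a fixed point of the Duhamel operator
\[
\Phi(w)(t) := \q F^{-1}\!\left[e^{-it\xi^3}\int_0^t \chi_n \left(N[\tilde S_n + \hat z + \tilde w] - N[\tilde S_n]\right)(s)\, ds \right],
\]
set up on a closed ball of $X_n([0,T])$ for a suitably small $T > 0$ (depending on $n$, $S$ and $z$), and then extend $w_n$ maximally. Note that the ``linear'' part of the fixed point, coming from the cross terms when $\tilde w = 0$, is already bounded away from $0$, so we expect $\|w_n\|_{X_n(t)} \gtrsim t \cdot (\text{constant})$ from the start; the $t^{1/3}$ bound is a crude overestimate which is all we need for the bootstrap in the next step.

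\textbf{Step 1: trilinear expansion.} Writing $\tilde u_n = \tilde S_n + \hat z + \tilde w$ and using the symmetric trilinearity of $N$, expand
\[
N[\tilde u_n] - N[\tilde S_n] = 3\,N[\tilde S_n,\tilde S_n,\hat z + \tilde w] + 3\,N[\tilde S_n,\hat z+\tilde w,\hat z+\tilde w] + N[\hat z+\tilde w,\hat z+\tilde w,\hat z+\tilde w].
\]
Each of the resulting seven trilinear terms carries at least one factor among $\hat z$ and $\tilde w$; I will control them in $X_n(t)$ separately.

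\textbf{Step 2: key estimates.} For each trilinear piece I bound
\[
\bigl\|\chi_n N[\tilde f, \tilde g, \tilde h]\,\chi_n^{-1}\bigr\|_{L^\infty_\xi} + \bigl\|\partial_\xi\!\left(\chi_n N[\tilde f, \tilde g, \tilde h]\right)\chi_n^{-1/2}\bigr\|_{L^2_\xi},
\]
where $(\tilde f,\tilde g,\tilde h)$ range over triples with entries in $\{\tilde S_n, \hat z, \tilde w\}$.  The external $\chi_n$ confines the output frequency $\xi$ to $|\xi| \lesssim n$, where $\chi_n^{-1}$ and $\chi_n^{-1/2}$ are bounded by constants depending on $n$.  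In the integrand, $\tilde w$ is $\chi_n$-localized via the $X_n$ norm, $\hat z \in L^1$ controls the corresponding frequency integrals, and $\tilde S_n$ is supported in $|t^{1/3}\xi_j| \lesssim n$ (with $\|\tilde S_n\|_{L^\infty}$ uniform in $t$). Since everything is in a bounded (though possibly $t$-dependent) frequency region, the integrals can be estimated by brute force, yielding an estimate of the form
\[
\|\chi_n(N[\tilde u_n]-N[\tilde S_n])\|_{X_n(t)} \le K_n\bigl(\|S\|_{\q E},\nz z\bigr)\, t^{-\alpha}\bigl(1 + \|w\|_{X_n(t)}\bigr)^3,
\]
for some $\alpha < 1$.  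The $\partial_\xi$ on the outer $\chi_n$ produces a factor $\chi_n'/\chi_n^{1/2} = (2/n)\chi'(\xi/n)$, which is uniformly bounded; when it falls inside $N$, one uses either the Leibniz rule in the frequency integrals, or the $X_n$-derivative norm on $\tilde w$ and the $L^2$-derivative control of $\tilde S_n$ and $\hat z$ provided by $\|S\|_{\q E}$ and $\nz z$.

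\textbf{Step 3: contraction.} Integrating in time gives $\|\Phi(w)(t)\|_{X_n(t)} \le C_n t^{1-\alpha}(1+\|w\|_{X_n([0,t])})^3$ and an analogous Lipschitz bound for $\Phi(w_1)-\Phi(w_2)$.  Choosing $T_n^0 > 0$ small enough (depending on $n$, $\|S\|_{\q E}$, $\nz z$) so that $C_n (T_n^0)^{1-\alpha} \ll 1$, $\Phi$ becomes a contraction on the ball $\{w \in X_n([0,T_n^0]) : \|w\|_{X_n(t)} \le t^{1/3}\text{ for all }t\}$ (using that $t^{1/3} \ge t^{1-\alpha}$ for $t \le 1$ provided $\alpha \ge 2/3$; if not, one simply shrinks $T_n^0$ further). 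Banach's fixed point theorem produces a unique solution $w_n \in X_n([0,T_n^0])$ satisfying $\|w_n(t)\|_{X_n(t)} \lesssim_n t^{1/3}$. Uniqueness on a given time interval follows from the same contraction estimates applied to two candidate solutions.

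\textbf{Step 4: maximal extension.} Once existence on $[0,T_n^0]$ is established, one iterates the contraction locally, starting from any time $t_0 \in (0,T_n)$ and using the data $w_n(t_0)$: the length of each step is controlled from below by a positive function of $\|w_n(t_0)\|_{X_n(t_0)}$. This produces a maximal solution on $[0,T_n)$, and the usual continuation argument shows that if $T_n < +\infty$, then $\|w_n\|_{X_n(t)} \to +\infty$ as $t \to T_n^-$.

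\textbf{Main obstacle.} The delicate point is Step 2: the self-similar profile $\tilde S_n(t,\xi) = \chi_n(t^{1/3}\xi)\tilde S(t^{1/3}\xi)$ is frequency-supported in $|\xi| \lesssim n/t^{1/3}$, a region that grows as $t \to 0$, while $\tilde w$ sits in the fixed band $|\xi| \lesssim n$. This mismatch produces a $t$-singular contribution when one must integrate over $\xi_1,\xi_2$ with $\tilde S_n \tilde S_n$; careful bookkeeping on the size of the integration region (an annulus of length $\sim n/t^{1/3}$ and width $\sim n$) is needed to show that the resulting singular exponent $\alpha$ is strictly less than $1$, so that integration in time yields a positive power of $t$ for $w_n$. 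Everything else is routine $n$-dependent bookkeeping.
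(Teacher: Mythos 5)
Your proposal is structurally the same as the paper's: a Duhamel fixed-point in $X_n([0,T])$, a trilinear expansion of $N[\tilde u_n]-N[\tilde S_n]$, placing $\hat z$ in $L^1$-based norms, using that $\tilde S_n$ is frequency-localized in $|\xi|\lesssim n t^{-1/3}$ with $\|\tilde S_n\|_{L^\infty}$ uniform, and distributing $\partial_\xi$ via Leibniz so the $\chi_n'/\chi_n^{1/2}$ factors remain harmless. The ``main obstacle'' paragraph correctly identifies where the $t$-singularity comes from. So you have the right plan.

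Two points where the write-up is not yet sound. First, the claim ``for some $\alpha<1$'' is not enough. To close the fixed point on the ball $\{\|w\|_{X_n(t)}\le t^{1/3}\}$ (and to get the stated $\|w_n\|_{X_n(t)}\lesssim_n t^{1/3}$) one needs the integrand of the Duhamel operator to be $O_n(s^{-2/3})$, i.e.\ $\alpha\le 2/3$, \emph{not} $\alpha\ge 2/3$ as you wrote — for $t\le 1$ one has $t^{1-\alpha}\le t^{1/3}$ iff $1-\alpha\ge 1/3$. Moreover the suggested fallback ``shrink $T_n^0$'' does not repair the situation when $\alpha>2/3$: the inequality $C_n t^{1-\alpha}\le t^{1/3}$ becomes $t^{2/3-\alpha}\le C_n^{-1}$, which \emph{fails} for small $t$ if $\alpha>2/3$. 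So verifying that the actual exponent is $2/3$ is not ``routine $n$-dependent bookkeeping'' — it is the decisive computation.

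Second, that computation is exactly what the paper's proof carries out for the model terms $N[\tilde S_n,\tilde S_n,\tilde z]$ and $N[\tilde S_n,\tilde S_n,\tilde w_n]$: the two $\chi_n(t^{1/3}\cdot)$ factors, combined with the derivative weight $\xi$, contribute precisely $\|\chi_n\|_{L^1} t^{-1/3}\cdot \|\jap{\xi}\chi_n\|_{L^\infty} t^{-1/3}=O_n(t^{-2/3})$ (one factor is saturated by the $d\xi_1$ or $d\xi_2$ integration, the other by the $|\xi_j|$ weight), while the $\hat z$ or $\tilde w_n\chi_n^{-1}$ factor absorbs the remaining integration without further singularity. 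The $\partial_\xi$ analysis produces three pieces (derivative on the outer $\chi_n$, on the phase, on the inner profiles), all of which are again $O_n(t^{-1/3})$ pointwise in $\xi$ and lie in $L^2_\xi$ thanks to $\xi\chi_n^{1/2}\in L^2$. Integrating in time then yields $t^{1/3}$ and $t^{2/3}$ respectively. You should carry out at least these two model computations explicitly; the remaining (higher-order in $z,w$) terms are indeed better behaved and can be deferred to ``similarly.''
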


\begin{nb}
	Since $w_n(0)=0$, $\tilde{w}_n$ will not have any jump discontinuity at $\xi=0$. Therefore $\partial_\xi \tilde{w}_n$ will be bounded in $L^2(\R)$ (see Proposition \ref{prop:aprioriI}). 
\end{nb}

\begin{proof}[Sketch of the proof.]
	The proof follows from a fixed-point argument in $X_n([0,T])$ for the map $\Psi$ defined as
	\[
	\widetilde{\Psi[w_n]}(t,\xi)=\chi_n(\xi) \int_0^t (N[\tilde u_n] - N[\tilde S_n])(s,\xi) ds.
	\]
	for some $0 < T \le 1$ to be determined later. Let us consider the source term 
\[
	N[\tilde S_n, \tilde S_n,\tilde z] (t,\xi)= i \frac{\xi}{4 \pi^2} \iint_{\xi_1+\xi_2+\xi_3=\xi} e^{it\Phi}\tilde{S}_n(t,\xi_1)\tilde{S}_n(t,\xi_2)\hat{z}(\xi_3)d\xi_1 d\xi_2.
	\]
The idea is to place the $z$ factor in $L^1$ based spaces. To bound the $L^\infty$ term in the $X_n$ norm, we estimate
	\begin{align*}
	| N[\tilde S_n, \tilde S_n,\tilde z] (t,\xi)| &\lesssim \| \tilde S \|_{L^\infty}^2 \iint_{\xi_1+\xi_2+\xi_3=\xi} (|\xi_1| + |\xi_2| + |\xi_3|) \chi_n(t^{1/3}\xi_1)\chi_n(t^{1/3}\xi_2)|\hat z(\xi_3)|d\xi_1 d\xi_2\\
	&\lesssim \frac{\| \tilde S \|_{L^\infty}^2}{t^{2/3}} \|\xi\chi_n\|_{L^\infty} \|\chi_n\|_{L^1}\|\hat z\|_{L^1} + \frac{\| \tilde S \|_{L^\infty}^2}{t^{1/3}} \|\chi_n\|_{L^\infty} \|\chi_n\|_{L^1}\| \jap{\xi}\hat z\|_{L^1}  \\
	& \lesssim \frac{1}{t^{2/3}} \| \jap{\xi} \chi_n \|_{L^\infty}  \|\chi_n\|_{L^1} \| \jap{\xi}\hat z\|_{L^1}  \| \tilde S \|_{L^\infty}^2.
	\end{align*}
	Therefore, for $t \in [0,1]$,
	\begin{align} 
	\left\| \int_0^t N[\tilde S_n,\tilde S_n,\tilde z](s,\xi) ds  \right\|_{L^\infty} & \lesssim  \int_0^t\frac{\| \tilde S \|_{L^\infty}^2}{s^{2/3}}  \| \jap{\xi} \chi_n \|_{L^\infty}  \|\chi_n\|_{L^1} \| \jap{\xi}\hat z\|_{L^1} ds \nonumber \\
	& \lesssim t^{1/3}  \| \jap{\xi} \chi_n \|_{L^\infty}  \|\chi_n\|_{L^1} \| \jap{\xi}\hat z\|_{L^1} \| \tilde S \|_{L^\infty}^2. \label{est:Xn_N[SSz]_1}
	\end{align}
	For the estimate of the $L^2$ term in $X_n$, we have to bound in $L^2$
\[ \chi_n^{-1/2}(\xi) \partial_\xi \left( \chi_n(\xi)  \int_0^t (N[\tilde S_n,\tilde S_n, \tilde z])(s,\xi) ds \right). \]
This requires us to consider the following three terms:
	\begin{align*}
	\MoveEqLeft \left|\chi_n^{-1/2}\partial_\xi(\xi\chi_n(\xi)) N[\tilde S_n, \tilde S_n,\tilde z] (t,\xi) \right| \\ 
& \lesssim |\partial_\xi(\xi\chi_n^{1/2}(\xi))| \| \tilde S \|_{L^\infty}^2 \iint_{\xi_1+\xi_2+\xi_3=\xi} \chi_n(t^{1/3}\xi_1)\chi_n(t^{1/3}\xi_2)|\hat{z}(\xi_3)|d\xi_1 d\xi_2 \\
& \lesssim |\partial_\xi(\xi\chi_n^{1/2}(\xi))|\frac{1}{t^{1/3}}\|\chi_n\|_{L^1}\|\hat{z}\|_{L^1}\|\chi_n\|_{L^\infty} \| \tilde S \|_{L^\infty}^2,\\
	\MoveEqLeft \left|\xi\chi_n^{1/2}(\xi)\iint_{\xi_1+\xi_2+\xi_3=\xi} t(\partial_\xi\Phi)\  e^{it\Phi}\tilde{S}_n(t,\xi_1)\tilde{S}_n(t,\xi_2)\tilde{z}(t,\xi_3)d\xi_1 d\xi_2\right| \\
& \lesssim |\xi\chi_n^{1/2} (\xi)| \| \tilde S \|_{L^\infty}^2  \iint_{\xi_1+\xi_2+\xi_3=\xi} t(\xi_1^2+\xi_2^2+\xi_3^2)\chi_n(t^{1/3}\xi_1)\chi_n(t^{1/3}\xi_2)|\hat{z}(\xi_3)|d\xi_1 d\xi_2 \\
& \lesssim |\xi\chi_n^{1/2}(\xi)|\|\jap{\xi}^2\chi_n\|_{L^1}\|\jap{\xi}^2\hat{z}\|_{L^1}\|\jap{\xi}^2\chi_n\|_{L^\infty} \| \tilde S \|_{L^\infty}^2, \\
	\MoveEqLeft \left|\xi\chi_n^{1/2}(\xi)N[\tilde S_n, \tilde S_n, \partial_\xi \tilde z] (t,\xi) \right| \\ 
& \lesssim |\xi\chi_n^{1/2}(\xi)|  \| \tilde S \|_{L^\infty}^2  \iint_{\xi_1+\xi_2+\xi_3=\xi} \chi_n(t^{1/3}\xi_1)\chi_n(t^{1/3}\xi_2)|\partial_\xi\hat{z}(\xi_3)|d\xi_1 d\xi_2 \\
& \lesssim |\xi\chi_n^{1/2}(\xi)| \frac{1}{t^{1/3}} \|\chi_n\|_{L^1}\|\hat{z}\|_{W^{1,1}}\|\chi_n\|_{L^\infty} \| \tilde S \|_{L^\infty}^2.
	\end{align*}
	Since $\chi_n^{1/2}\in \mathcal{S}(\R)$, these terms are bounded in $L^2$ and
\[ \left\| \chi_n^{-1/2}(\xi) \partial_\xi \left( \chi_n N[\tilde S_n, \tilde S_n,\tilde z] (s,\xi) \right) \right\|_{L^2} \lesssim t^{-1/3} \| \tilde S \|_{L^\infty}^2. \]
After integration in time, we get for $t \in [0,1]$
\begin{equation}
 \left\| \chi_n^{-1/2}(\xi) \partial_\xi \left( \chi_n \int_0^t N[\tilde S_n, \tilde S_n,\tilde z] (s,\xi) ds \right)  \right\|_{L^2} \lesssim t^{2/3} \| \tilde S \|_{L^\infty}^2,  \label{est:Xn_N[SSz]_2} 
\end{equation}
and so, together with \eqref{est:Xn_N[SSz]_1}, we conclude
\[ \left\| \chi_n \int_0^t N[\tilde S_n, \tilde S_n, \tilde z](s) ds \right\|_{X_{n}(t)} \lesssim t^{1/3} \| \tilde S \|_{L^\infty}^2. \]
The others source terms (where $z$ is quadratic or cubic) can be treated in a similar fashion (and are actually better behaved).

Similarly, let us consider the linear term $N[\tilde S_n, \tilde S_n, \tilde w_n]$:
\begin{align*}
\MoveEqLeft | N[\tilde S_n, \tilde S_n,\tilde w_n] (t,\xi)| \\
&\lesssim \| \tilde S \|_{L^\infty}^2 \| \tilde w_n (t) \chi_n^{-1} \|_{L^\infty} \iint (|\xi_1| + |\xi_2| + |\xi_3|) \chi_n(t^{1/3}\xi_1)\chi_n(t^{1/3}\xi_2) ) \chi_n(\xi_3) d\xi_1 d\xi_2\\
&\lesssim \| \tilde S \|_{L^\infty}^2 \| w_n \|_{X_n(t)} \left( \frac{1}{t^{2/3}} \|\xi\chi_n\|_{L^\infty} \|\chi_n\|_{L^1}^2 + \frac{1}{t^{1/3}} \|\chi_n\|_{L^1} \| \chi_n \|_{L^\infty}  \| \xi \chi_n  \|_{L^1}  \right) \\
& \lesssim \frac{1}{t^{2/3}} \| \jap{\xi} \chi_n \|_{L^\infty}  \|\chi_n\|_{L^1}  \| \tilde S \|_{L^\infty}^2 \| w_n \|_{X_n(t)}.
\end{align*}
Hence, after integration in time,
\begin{align} \label{est:NSSw_Xn_1}
\left\|  \int_0^t N[\tilde S_n, \tilde S_n, \tilde w_n](s,\xi) ds \right\|_{L^\infty} \lesssim t^{1/3}  \| w_n \|_{X_n([0,t])}.
\end{align}
For the $L^2$ term, we have
\begin{align*}
	\MoveEqLeft \left|\chi_n^{-1/2}\partial_\xi(\xi\chi_n(\xi)) N[\tilde S_n, \tilde S_n,\tilde w_n] (t,\xi) \right| \\ 
& \lesssim  |\partial_\xi(\xi\chi_n^{1/2}(\xi))| \| \tilde S \|_{L^\infty}^2 \| \tilde w_n (t) \chi_n^{-1} \|_{L^\infty} \iint \chi_n(t^{1/3}\xi_1)\chi_n(t^{1/3}\xi_2)\chi_n(\xi_3)d\xi_1 d\xi_2 \\
& \lesssim |\partial_\xi(\xi\chi_n^{1/2}(\xi))|\frac{1}{t^{1/3}} \|\chi_n\|_{L^\infty} \|\chi_n\|_{L^1}^2  \| \tilde S \|_{L^\infty}^2 \| w_n \|_{X_n(t)},\\
	\MoveEqLeft \left|\xi\chi_n^{1/2}(\xi)\iint_{\xi_1+\xi_2+\xi_3=\xi} t(\partial_\xi\Phi)\  e^{it\Phi}\tilde{S}_n(t,\xi_1)\tilde{S}_n(t,\xi_2)\tilde w_n(t,\xi_3)d\xi_1 d\xi_2\right| \\
& \lesssim \| \tilde S \|_{L^\infty}^2 \| \tilde w_n (t) \chi_n^{-1} \|_{L^\infty}  |\xi\chi_n^{1/2}(\xi)|\iint t(\xi_1^2+\xi_2^2+\xi_3^2)\chi_n(t^{1/3}\xi_1)\chi_n(t^{1/3}\xi_2) \chi(\xi_3) d\xi_1 d\xi_2 \\
& \lesssim |\xi\chi_n^{1/2}(\xi)|\|\jap{\xi}^2\chi_n\|_{L^\infty }\|\chi_n \|_{L^1}^2 \| \tilde S \|_{L^\infty}^2  \| w_n \|_{X_n(t)}, \\
	\MoveEqLeft \left|\xi\chi_n^{1/2}(\xi)N[\tilde S_n, \tilde S_n, \partial_\xi \tilde w_n] (t,\xi) \right| \\ 
& \lesssim   |\xi\chi_n^{1/2}(\xi)|  \| \tilde S \|_{L^\infty}^2 \iint \chi_n(t^{1/3}\xi_1)\chi_n(t^{1/3}\xi_2)|\partial_\xi \tilde w_n(t,\xi_3)|d\xi_1 d\xi_2 \\
& \lesssim |\xi\chi_n^{1/2}(\xi)| \| \tilde S \|_{L^\infty}^2 \| \partial_\xi \tilde w_n  \chi_n^{-1/2} \|_{L^2} \left( \iint \chi_n^2(t^{1/3}\xi_1) \chi_n(\xi_3) d\xi_1  \right)^{1/2}\chi_n (t^{1/3}\xi_2)d\xi_2 \\
& \lesssim  |\xi\chi_n^{1/2}(\xi)| \| \tilde S \|_{L^\infty}^2 \| w_n \|_{X_n(t)} \frac{1}{t^{1/3}} \| \chi_n \|_{L^2} \| \| \chi_n \|_{L^\infty} \| \chi_n \|_{L^1}.
\end{align*}
Therefore, taking the $L^2$ norm in $\xi$ gives
\[ \left\| \chi_n^{-1/2} \partial_\xi \left( \chi_n N[\tilde S_n, \tilde S_n,\tilde w_n] (t) \right) \right\|_{L^2} \lesssim t^{-1/3} \| \tilde S \|_{L^\infty}^2 \| w_n \|_{X_n(t)}. \]
Integrating in time, we get, for $t \in [0,1]$,
\begin{equation}
 \left\| \chi_n^{-1/2} \partial_\xi \left( \chi_n \int_0^t N[\tilde S_n, \tilde S_n,\tilde z] (s) ds \right)  \right\|_{L^2} \lesssim t^{2/3} \| \tilde S \|_{L^\infty}^2 \| w_n \|_{X_n([0,t])},
\end{equation}
and with \eqref{est:NSSw_Xn_1}, we obtain
\[ \left\| \chi_n \int_0^t N[\tilde S_n, \tilde S_n, \tilde w_n](s,\xi) ds \right\|_{X_n(t)} \lesssim t^{1/3}  \| \tilde S \|_{L^\infty}^2 \| w_n \|_{X_n([0,t])}. \]
All the remaining terms can be estimated similarly (and enjoy in fact better bounds), and the difference estimates can be performed in the same way as well. Choosing $T$ sufficently small, $\Phi$ becomes a contraction over $X_n([0,T])$. This concludes the proof.
\end{proof}

In order to take the limit $n\to \infty$, one must prove that the maximal existence time $T_n$ does not tend to $0$ and also that the approximations remain bounded in $\mathcal{E}$. To do this, we  actually prove \emph{a priori} bounds in the stronger spaces $X_n$, thus tackling both problems at once. The methodology to prove this follows the heuristics presented in the introduction.

In the remainder of this work, we now assume that, for some small $\delta$ to be fixed later on, \eqref{eq:small} holds:
\[	\nz{z}+\|S\|_{\mathcal{E}((0,+\infty))}<\delta. \]
In order to bound $\partial_\xi\tilde{w}_n$, the scaling operator $I$ comes into play: we recall its definition given in \eqref{def:I} 
\[
\widehat{Iu}(t,\xi) = ie^{it\xi^3}\left(\partial_\xi\tilde{u} - \frac{3t}{\xi}\partial_t\tilde{u}\right).
\]

\begin{prop} \label{prop:aprioriI}
There exist $\delta_0>0$ such that, given $\delta \in (0,\delta_0]$, the solution $w_n$ of Proposition \ref{lem:existwn} satisfies $T_n >1$ and
\begin{equation}
\label{est:wn_E_L2}
\forall t \in (0,1], \quad \| w_n(t) \|_{\q E(t)} \lesssim \delta^3 t^{1/9} \quad \text{and} \quad \| w_n(t) \|_{L^2} \lesssim \delta^3 t^{1/18}.
\end{equation}
Moreover, $\partial_\xi \tilde w_n(t) \in L^2(\m R)$, $\partial_t \tilde w_n(t) \in L^\infty(\m R)$ and 
\begin{gather}
\label{est:wn_xL2}
 \forall t \in (0,1], \quad \| \partial_\xi \tilde w_n(t) \|_{L^2} \lesssim \delta^3 t^{5/18}  \quad \text{and} \quad \left\| \partial_t \tilde{w}_n(t) \right\|_{L^\infty} \lesssim \frac{\delta^3}{t^{8/9}}.
 \end{gather}
\end{prop}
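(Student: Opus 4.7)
The plan is to establish all the bounds at once by a continuity/bootstrap argument. Since $w_n \in X_n([0,T_n))$ with $w_n(0)=0$ and, by Proposition \ref{lem:existwn}, $\| w_n \|_{X_n(t)} \lesssim_n t^{1/3}$ for small $t$, the bootstrap assumption
\[ \| w_n(t) \|_{\q E(t)} \le K \delta^3 t^{1/9}, \qquad \| I w_n(t) \|_{L^2} \le K \delta^3 t^{1/18} \]
holds trivially for $t$ close to $0$ with some large absolute constant $K$. We let $T^*_n \le \min(T_n,1)$ be the maximal time on which these inequalities hold, and we aim to improve each one by a factor $1/2$ provided $\delta$ is small enough, which will force $T^*_n = 1$ and then $T_n > 1$.

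For the $L^\infty$ bound on $\tilde w_n$, we use $\partial_t \tilde w_n = \chi_n(N[\tilde u_n] - N[\tilde S_n])$, and expand
\[ \tilde u_n^3 - \tilde S_n^3 = 3\tilde S_n^2(\tilde z + \tilde w_n) + 3 \tilde S_n (\tilde z + \tilde w_n)^2 + (\tilde z + \tilde w_n)^3, \]
so that $N[\tilde u_n] - N[\tilde S_n]$ splits as a sum of trilinear terms $N[\cdot,\cdot,\cdot]$, each carrying at least one factor of $\tilde z$ or $\tilde w_n$. Each such term is estimated by Lemma~\ref{lem:N_E} or Lemma~\ref{lem:termos_com_z}. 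The worst decay (slowest in $t$) comes from $N[\tilde S_n,\tilde S_n,\tilde z]$, which by \eqref{eq:est_SSz} is $O(\delta^3 t^{-8/9})$; the term $N[\tilde S_n,\tilde S_n,\tilde w_n]$ is bounded by $\delta^2 \| w_n\|_\q E /t \le K\delta^5 t^{-8/9}$ using the bootstrap, and all other combinations are at least as good. Since $\tilde w_n(0)=0$, integration in time yields $\| \tilde w_n(t) \|_{L^\infty} \lesssim (\delta^3 + K\delta^5) t^{1/9}$, which is the $L^\infty$ part of the first bootstrap bound improved by $1/2$ for $\delta$ sufficiently small.

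The second bootstrap bound requires an energy estimate on $Iw_n$. Applying $I$ to \eqref{eq:w_n} and using the fundamental cancellation $(IS_n)_x \equiv 0$ (which follows from the self-similar ansatz, exactly as in the heuristics in the introduction, modulo commutator errors with $\chi_n$ that are harmless), one obtains an equation of the schematic form
\[ (\partial_t + \partial_x^3) I w_n = 3 \Pi_n \bigl( u_n^2 (Iw_n)_x + u_n^2 (I e^{-t\partial_x^3} z)_x \bigr). \]
Multiplying by $Iw_n$ and integrating by parts once in the first term gives, as in \eqref{eq:Iu},
\[ \frac{d}{dt} \|Iw_n\|_{L^2}^2 \lesssim \|u_n (u_n)_x\|_{L^\infty} \|Iw_n\|_{L^2}^2 + \|u_n\|_{L^6}^2 \|(I e^{-t\partial_x^3}z)_x\|_{L^6} \|Iw_n\|_{L^2}. \]
By Lemma~\ref{lem:estE} and the bootstrap, $\|u_n (u_n)_x\|_{L^\infty} \lesssim \delta^2/t$ and $\|u_n\|_{L^6} \lesssim \delta t^{-5/18}$, while $(I e^{-t\partial_x^3}z)_x$ has Fourier transform $\partial_\xi \hat z$ and so is uniformly bounded in any $L^p$ by $\vvvert z \vvvert \lesssim \delta$. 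A Gr\"onwall-type argument, exploiting that the multiplicative factor $\delta^2/t$ is marginally singular but raised to a small power via $t^{\delta^2}$, yields $\|Iw_n(t)\|_{L^2}^2 \lesssim \delta^6 t^{1/9}$, i.e. the second bootstrap estimate with constant improved by $1/2$. Inverting the definition of $I$ gives
\[ |\partial_\xi \tilde w_n(t,\xi)| \le |\widehat{Iw_n}(t,\xi)| + \tfrac{3t}{|\xi|}\,|\partial_t \tilde w_n(t,\xi)|, \]
and combining with the pointwise bound $\|\partial_t \tilde w_n\|_{L^\infty} \lesssim \delta^3 t^{-8/9}$ (which follows from the $L^\infty$ bounds above) produces the stated $\|\partial_\xi \tilde w_n\|_{L^2} \lesssim \delta^3 t^{5/18}$ and closes the first bootstrap for $\|w_n\|_{\q E(t)}$. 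The $L^2$ bound $\|w_n(t)\|_{L^2} \lesssim \delta^3 t^{1/18}$ finally follows by Plancherel, since $\tilde w_n(t,0) = 0$ and $\|\partial_\xi \tilde w_n\|_{L^2}$ is controlled.

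The main obstacle is the marginally singular source term $N[\tilde S_n,\tilde S_n,\tilde z]$: a naive $L^\infty \cdot L^\infty \cdot L^\infty$ estimate of $N$ would only give $1/t$ decay, which is non-integrable at $0$. The improvement to $t^{-8/9}$ in Lemma~\ref{lem:termos_com_z}, earned through stationary-phase gains from the smoothness of $\hat z$, is exactly what allows the integration in time to close, and analogously the $L^6$ control of $(Ie^{-t\partial_x^3}z)_x$ is what prevents the source term in the $I$-energy from being too singular at $t=0$. The commutation of $I$ with $\Pi_n$ and the cut-off $S_n$ must be tracked, but these errors are supported at high frequencies and disappear when interpreted through the $\chi_n$ weights inherent to the $X_n$ topology.
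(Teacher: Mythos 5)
Your overall bootstrap scheme, the use of $(IS_n)_x \equiv 0$, and the role of Lemmas~\ref{lem:N_E}--\ref{lem:termos_com_z} are all in the spirit of the paper's proof, but two of the closing steps contain genuine errors.

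\textbf{The $L^2$ bound on $w_n$ does not follow from Plancherel.} You claim $\|w_n(t)\|_{L^2}\lesssim\delta^3 t^{1/18}$ because $\tilde w_n(t,0)=0$ and $\|\partial_\xi\tilde w_n(t)\|_{L^2}$ is controlled. But a function vanishing at the origin with an $L^2$ derivative need not be in $L^2$: for instance $\tilde w(\xi)=\min(1,|\xi|)$ satisfies $\tilde w(0)=0$ and $\partial_\xi\tilde w\in L^2$, yet $\tilde w\notin L^2$. The fundamental theorem of calculus and Cauchy--Schwarz only give the pointwise bound $|\tilde w_n(t,\xi)|\lesssim|\xi|^{1/2}\|\partial_\xi\tilde w_n\|_{L^2}$, which is not square-integrable. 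The paper devotes a separate step (Step 6 of the proof of Proposition~\ref{prop:aprioriI}) to this: one writes the $\frac{d}{dt}\|\hat w_n\chi_n^{-1/2}\|_{L^2}^2$ energy identity, splits $\int(u_n^3-S_n^3)\partial_x w_n\,dx$ according to whether $w_n$ appears once or at least twice, and for the ``$w_n$ once'' terms places the $z$ factor in $L^1_\xi$ on the Fourier side and uses Lemma~\ref{lem:estL2} — which is designed precisely to bound $N[\tilde S,\tilde S,\,i\xi\tilde w_n]$ in $L^\infty_\xi$ — to recover a $1/t$ weight. Without this, the $L^2$ bound simply does not close.

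\textbf{The passage from $Iw_n$ back to $\partial_\xi\tilde w_n$ is mishandled.} You write $|\partial_\xi\tilde w_n|\le|\widehat{Iw_n}|+\tfrac{3t}{|\xi|}|\partial_t\tilde w_n|$ and propose to use the $L^\infty$ bound $\|\partial_t\tilde w_n\|_{L^\infty}\lesssim\delta^3 t^{-8/9}$ to estimate the second term in $L^2$. This fails because $1/\xi\notin L^2$ near the origin. The crucial point is that $\partial_t\tilde w_n=\chi_n(N[\tilde u_n]-N[\tilde S_n])$ carries an \emph{explicit} factor of $\xi$ from the derivative nonlinearity, so $\tfrac{t}{\xi}\partial_t\tilde w_n = 3te^{-it\xi^3}\chi_n\,\mathcal F(u_n^3-S_n^3)$ is bounded in $L^2$ by $3t\|u_n^3-S_n^3\|_{L^2}$, with no singularity at $\xi=0$; this is where the $L^6$ bounds of Lemma~\ref{lem:estE} are actually used. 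Relatedly, your claimed Gr\"onwall output $\|Iw_n\|_{L^2}\lesssim\delta^3 t^{1/18}$ has the wrong exponent: the source $\delta^3/t^{5/9}$ (or $\delta^3/t^{2/3}$ in the paper's $L^\infty\times L^2$ version) integrates to give $t^{4/9}$ (resp.\ $t^{1/3}$, sharpened to $t^{5/18}$ with the $A\delta^2$ tracking), and $t^{1/18}$ would be far too weak to close the $\q E(t)$ part of the bootstrap, which needs $\|\partial_\xi\tilde w_n\|_{L^2}\lesssim\delta^3 t^{5/18}$.
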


\begin{proof}
We perform a bootstrap argument. Fix $\delta_0 \in (0,1)$ and $A = A(\delta_0) \ge 1 $ to be chosen later on. For now, we only require $A\delta_0^2 \le 1$. We let $\delta \in (0,\delta_0]$. 

Define, for any $t\in (0,T_n)$,
	\[
	f_n(t)=t^{-1/9} \left( \|\tilde{w}_n(t)\chi_n^{-1}\|_{L^\infty} + t^{-1/6} \|\partial_\xi \tilde{w}_n(t) \chi_n^{-1/2}\|_{L^2} \right)
	\]
	and
	\[
	\tau_n=\sup \left\{ t \in [0, \min(1,T_n)) : \forall s \in (0,t], \  f_n(s)\le A \delta^3 \right\}.
	\]
In the following argument, the implicit constants in $\lesssim$ do not depend on $\delta$ or $A$.
	
From Proposition \ref{lem:existwn}, $f_n$ is continuous on $(0,T_n)$ and $f_n(t) \lesssim t^{1/18}$ for $t \in [0,T_n^0]$, so that $f_n(t) \to 0$ as $t \to 0^+$.
In particular, $\tau_n >0$.

\bigskip

\emph{Step 1. Improved estimates on the nonlinear term.}  

First observe that	
\begin{equation} \label{est:boot_w_E}
\forall t \in (0,\tau_n), \quad \|w_n(t)\|_{\mathcal{E}(t)} \le t^{1/9}f_n(t)\le A \delta^3 t^{1/9}.
\end{equation}

Using the estimates from Lemma \ref{lem:estE}, for all $t \in [0,\tau_n]$,
\begin{align*}
\|u_n^3-S_n^3\|_{L^2} & \lesssim \left(\|w_n\|_{L^6}^2 + \|S_n\|_{L^6}^2 + \|e^{-t\partial_x^3}z\|_{L^6}^2\right)\left(\|w_n\|_{L^6}+ \|e^{-t\partial_x^3}z\|_{L^6}\right) \\
& \lesssim \left( \frac{1}{t^{5/9}} \|w_n(t)\|_{\mathcal{E}(t)}^2 + \frac{1}{t^{5/9}} \|S_n(t)\|_{\mathcal{E}(t)}^2 + \nz{z}^2 \right) \left(  \frac{1}{t^{5/18}} \|w_n(t)\|_{\mathcal{E}(t)} +  \nz{z} \right) \\
& \lesssim  \frac{1}{t^{5/9}} \left( (A\delta^3)^2 t^{2/9} + \delta^2 \right) \left( \frac{A\delta^3 t^{1/9}}{t^{5/18}} + \delta \right).
\end{align*}
Recall that $A\delta^2 \le 1$ so that $A\delta^3 t^{1/9} \le \delta$, and so
\begin{equation} \label{est:boot_NL}
\forall t \in [0,\tau_n),  \quad \|u_n^3-S_n^3\|_{L^2}  \lesssim \frac{A\delta^5}{t^{13/18}} + \frac{\delta^3}{t^{5/9}}.
\end{equation}
Let emphasize the gain of $t^{1/9}$ with respect to the rough estimate of Lemma \ref{lem:estE}
\[ \| v^3 \|_{L^2} = \| v \|_{L^6}^3 \lesssim \frac{\| v \|_{\q E(t)}^3}{t^{5/6}}. \]

\bigskip

\emph{Step 2. A priori estimate for $Iw_n$.} In this step, we prove that
\begin{equation}\label{lem:aprioriI}
\forall t \in [0,\tau_n), \quad  \left\| \widehat{Iw_n}(t) \chi_n^{-1/2}\right\|_{L^2} \lesssim \delta^3 t^{5/18} \left( t^{1/18} +  A\delta^2 \right).  
\end{equation}

Let us notice that from the equation for $\tilde{w}_n$ and Lemma \ref{lem:estE}, given $t<T_n^0$,
\begin{align*}
\left\|\widehat{Iw_n}(t)\chi_n^{-1/2}\right\|_{L^2} & \le \|\partial_\xi \tilde{w}_n(t)\chi_n^{-1/2}\|_{L^2} + 3t\|\chi_n^{1/2} \q F(u_n^3-S_n^3)\|_{L^2}\\
&\lesssim \|\partial_\xi \tilde{w}_n(t)\chi_n^{-1/2}\|_{L^2} + 3t\| u_n^3-S_n^3 \|_{L^2} \\
& \lesssim \|w_n(t)\|_{X_n(t)} + t^{1/6}(\delta^3+\|w_n(t)\|_{X_n(t)}^3)\lesssim t^{1/3} + t^{1/6}(\delta^3+t).
\end{align*}
We conclude that
\begin{equation}\label{est:Iwn_init}
\forall \gamma <1/6, \quad \left\|\widehat{Iw_n}(t)\chi_n^{-1/2}\right\|_{L^2} = o(t^\gamma) \quad \text{as} \quad t \to 0^+.
\end{equation}

Now that we have a control near $t=0$, we aim to control $Iw_n$ for $0<t<1$. Denote
\[\widehat{\Pi'_n u}(\xi)=\chi_n'(\xi)\hat{u}(\xi).\]
A simple computation yields
	\begin{align*}
	(\partial_t + \partial^3_x)Iw_n &= 3\Pi_n(u_n^2(Iu_n)_x-S_n^2(IS_n)_x) + \Pi_n'(u_n^3-S_n^3)_x \\&= 3\Pi_n(u_n^2(Iw_n)_x)+ 3\Pi_n(u_n^2(Ie^{-t\partial_x^3}z)_x) + \Pi_n'(u_n^3-S_n^3)_x
	\end{align*}
	where we used the decisive property $(IS_n)_x\equiv 0$. Equivalently, on Fourier side it writes
\begin{align*}
\partial_t \widehat{I w_n}  - i \xi^3 \widehat{I w_n} &= 3 \chi_n \q F\left(u_n^2(Iw_n)_x\right) + 3 \chi_n \q F\left(u_n^2(Ie^{-t\partial_x^3}z)_x\right) +  i \xi \chi_n' \q F\left(u_n^3-S_n^3\right)
\end{align*}
We now multiply by $\overline{\widehat{I w_n}} \chi_n^{-1}$, integrate on $\m R$ and take the real part. Due to the jump of $\tilde u_n$ at $\xi=0$, an extra care should be taken in the computations: one should first integrate over $\m R\setminus (-\varepsilon,\varepsilon)$ and let $\e \to 0$. This procedure shows that no unexpected term occur, we refer to  \cite[Lemma 12]{CCV20} for full details. This justifies the following computations:
\begin{align*}
\frac{1}{2} \frac{d}{dt} \int |\widehat{I w_n}|^2 \chi_n^{-1} d\xi & = 3 \int \q F(u_n^2(Iw_n)_x) \overline{\widehat{I w_n}} d\xi + 3 \int \q F(u_n^2(Ie^{-t\partial_x^3}z)_x) \overline{\widehat{I w_n}} d\xi\\
& \qquad - \Im \int \xi \frac{\chi_n'}{\chi_n} \q F(u_n^3-S_n^3) \overline{\widehat{I w_n}} d\xi.
\end{align*}
Now by Plancherel and integration by parts,
\begin{align*}
\MoveEqLeft \int \q F(u_n^2(Iw_n)_x) \overline{\widehat{I w_n}} d\xi  = \frac{1}{2\pi} \int u_n^2(Iw_n)_x (I w_n) dx = - \frac{1}{2\pi} \int (u_n)_x u_n (Iw_n)^2 dx \\
\MoveEqLeft \left| \int \q F(u_n^2(Iw_n)_x) \overline{\widehat{I w_n}} d\xi \right|  \lesssim \frac{\delta^2}{t} \| Iw_n \|_{L^2}^2 \lesssim  \frac{\delta^2}{t} \| \widehat{Iw_n} \|_{L^2}^2, \\
\MoveEqLeft \left| \int \q F(u_n^2(Ie^{-t\partial_x^3}z)_x)  \overline{\widehat{I w_n}} d\xi \right|  \le \| u_n^2(Ie^{-t\partial_x^3}z)_x \|_{L^2} \| \widehat{I w_n} \|_{L^2} \\
& \lesssim \| u_n \|_{L^\infty}^2 \| (Ie^{-t\partial_x^3}z)_x \|_{L^2} \| \widehat{I w_n} \|_{L^2} \\
& \lesssim \frac{\delta^2}{t^{2/3}} \| \partial_\xi \hat z \|_{L^2} \| \widehat{I w_n} \|_{L^2} \lesssim  \frac{\delta^3}{t^{2/3}} \| \widehat{I w_n} \|_{L^2}, \\
\MoveEqLeft \left|  \int \xi \frac{\chi_n'}{\chi_n} \q F(u_n^3-S_n^3) \overline{\widehat{I w_n}} d\xi \right| = 2 \left|  \int \xi (\chi_n^{1/2})' \q F(u_n^3-S_n^3) \overline{\widehat{I w_n}} \chi_n^{-1/2} d\xi \right|,  \\
& \le \| \xi (\chi_n^{1/2})' \|_{L^\infty} \|  u_n^3-S_n^3 \|_{L^2} \| \widehat{I w_n} \chi_n^{-1/2} \|_{L^2} \\
& \lesssim \left( \frac{A\delta^5}{t^{13/18}} + \frac{\delta^3}{t^{5/9}} \right)  \| \widehat{I w_n} \chi_n^{-1/2} \|_{L^2}.
\end{align*}
(We also used \eqref{est:boot_NL} for the last estimate).
As $0< \chi_n < 1$, $ \| \widehat{I w_n} \|_{L^2} \le \| \widehat{I w_n}\chi_n^{-1/2} \|_{L^2}$. Therefore, we obtain, for $t<\tau_n$,
\begin{align*}
\frac{1}{2}\frac{d}{dt}\left\| \widehat{Iw_n}(t) \chi_n^{-1/2}\right\|_{L^2}^2  & \lesssim \frac{\delta^2}{t}\left\| \widehat{Iw_n}(t) \chi_n^{-1/2}\right\|_{L^2}^2 + \left(\frac{\delta^3}{t^{2/3}}+ \left( \frac{A\delta^5}{t^{13/18}} + \frac{\delta^3}{t^{5/9}} \right) \right) \left\| \widehat{Iw_n}(t) \chi_n^{-1/2}\right\|_{L^2}. 
	\end{align*}
This implies that, for some universal constant $C$,
\[ \left| \frac{d}{dt} \left( t^{-2C\delta^2} \left\| \widehat{Iw_n}(t) \chi_n^{-1/2}\right\|_{L^2} \right) \right| \le 2C t^{- 2C\delta^2}  \left( \frac{\delta^3}{t^{2/3}} + \frac{A\delta^5}{t^{13/18}} \right). \]
Now, for $\delta < \delta_0$ small enough, $2C\delta^2 < 5/18$ so that the right hand side is integrable in time and due to \eqref{est:Iwn_init},  $t^{-2C\delta^2} \left\| \widehat{Iw_n}(t) \chi_n^{-1/2}\right\|_{L^2} \to 0$ as $t \to 0^+$. Hence, we can integrate the above estimate on $[0,t]$ and get
\begin{equation}
 \forall t \in [0,\tau_n), \quad  \left\| \widehat{Iw_n}(t) \chi_n^{-1/2}\right\|_{L^2} \lesssim \delta^3 t^{5/18} \left( t^{1/18} +  A\delta^2 \right),
\end{equation}
which is \eqref{lem:aprioriI}.

\bigskip

\emph{Step 3. A priori estimate for $\partial_t w_n$.} We claim that for all $n \in \m N$,
	\begin{align} \label{lem:aprioriLinfty}
\forall t \in (0,T_n), \quad \left\| \partial_t \tilde{w}_n(t) \chi_n^{-1} \right\|_{L^\infty} & \lesssim \frac{1}{t}\|w_n(t)\|_{\mathcal{E}(t)}\left(\|w_n(t)\|_{\mathcal{E}(t)}^2 + \|S\|_{\mathcal{E}(1)}^2\right) \\
	& \qquad + \frac{1}{t^{8/9}}\nz{z}\left(\|w_n(t)\|_{\mathcal{E}(t)}^2+ \|S\|_{\mathcal{E}(1)}^2 + \nz{z}^2\right). \nonumber
	\end{align}

Indeed, we have
\[ \partial_t \tilde{w}_n(t) \chi_n^{-1} = N[\tilde S_n + \tilde z + \tilde w_n](t) - N[\tilde S_n](t). \]
For the nonlinear terms with at least one $z$, we use Lemma \ref{lem:termos_com_z} which gives the pointwise (in $\xi$) bound
\[ \frac{1}{t^{8/9}} (\| S \|_{\q E(1)} + \| w_n(t) \|_{\q E(t)})^2 \nz{z} + \frac{1}{t^{2/3}} (\| S \|_{\q E(1)} + \| w_n(t) \|_{\q E(t)}) \nz{z}^2 + \nz{z}^3. \]
The remaining terms are $N[\tilde S_n, \tilde S_n, \tilde w_n]$, $N[\tilde S_n, \tilde w_n, \tilde w_n]$ and $N[\tilde w_n, \tilde w_n, \tilde w_n]$ (they all have at least one $w_n$). Using Lemma \ref{lem:N_E}, they are bounded pointwise by
\[ \frac{1}{t} (\| S \|_{\q E(1)} + \| w_n(t) \|_{\q E(t)})^2 \| w_n \|_{\q E(t)}. \]
This gives \eqref{lem:aprioriLinfty}. If we restrict to the interval $(0,\tau_n)$, using \eqref{est:boot_w_E}, and $A\delta^2 \le1$, this rewrites simply
\begin{equation} \label{est:boot_wt}
\forall t \in (0,t_n), \quad  \left\| \partial_t \tilde{w}_n(t) \chi_n^{-1} \right\|_{L^\infty} \lesssim \frac{\delta^3}{t^{8/9}}.
\end{equation}

\bigskip

\emph{Step 4. Bound on $f_n(t)$}.
Let us bound separately the two terms of $f_n$. First, after integration in time of \eqref{est:boot_wt} (recall that $\| \tilde w_n(t) \chi_n^{-1} \|_{L^\infty} \to 0$ as $t \to 0^+$), we get
\begin{align} \label{est:step2_a}
\forall t \in [0,\tau_n], \quad \|\tilde{w}_n (t) \chi_n^{-1}\|_{L^\infty} \lesssim \delta^3 t^{1/9}.
\end{align}
Second, in view of the definition of $I$, we can write
\[ \partial_\xi \tilde{w}_n = -ie^{it\xi^3}\widehat{Iw_n} +\frac{3t}{\xi}\partial_t \tilde{w}_n = -ie^{it\xi^3}\widehat{Iw_n} +3te^{-it\xi^3}\chi_n \q F(u_n^3-S_n^3), \]
and so, using \eqref{lem:aprioriI} and \eqref{est:boot_NL}, we infer
\begin{align}
\left\| \partial_\xi \tilde{w}_n \chi_n^{-1/2} \right\|_{L^2} & \le \left\| \widehat{Iw_n}(t) \chi_n^{-1/2}\right\|_{L^2} + 3t\|u_n^3-S_n^3\|_{L^2} \\
& \lesssim \delta^3 t^{5/18} (t^{1/18} + A\delta^2) + A\delta^5 t^{5/18} + \delta^3 t^{4/9}\\& \lesssim \delta^3t^{1/3}+A\delta^5t^{5/18}.\label{est:step2_b}
\end{align}
Estimates \eqref{est:step2_a} and \eqref{est:step2_b} give that, for all $t \in [0,\tau_n]$,
\begin{align*}
	f_n(t) & = t^{-1/9} \left( \|\tilde{w}_n (t) \chi_n^{-1}\|_{L^\infty} + t^{-1/6} \| \partial_\xi \tilde{w}_n \chi_n^{-1/2} \|_{L^2} \right) \\
	& \lesssim  t^{-1/9} \left( \delta^3 t^{1/9} + t^{-1/6} ( \delta^3 t^{1/3} + A \delta^5  t^{5/18}) \right) \lesssim \delta^3 + A \delta^5.
\end{align*}
Let us remark that both terms in $f_n$ contribute (the leading powers of $t$ cancel for both).

Since $A\delta^2 \le 1$, there exists a universal constant $M$ such that
\begin{equation}\label{eq:estimat_fn}
\forall t\in [0,\tau_n], \quad f_n(t)\le M \delta^3.
\end{equation}

\bigskip

\emph{Step 5. Closing the bootstrap.} Choosing
\[ \delta_0=\min\left\{ \frac{1}{2\sqrt{M}}, \sqrt{\frac{5}{36C}} \right\} \quad \text{and then} \quad  A=1/\delta_0^2, \] 
Step 2 and \eqref{eq:estimat_fn} imply that
\[ \forall t \in [0,\tau_n], \quad  f_n(t) \le \frac{A}{2} \delta^3. \]
From a continuity argument, we necessarily have $\tau_n = \min(1,T_n)$. But if $T_n \le 1$, then we also have 
\[ \sup_{t\in [0,T_n)} \| w_n(t) \|_{X_n(t)} \le \sup_{t \in [0,T_n)} f_n(t) \le A\delta^3, \]
which contradicts the maximality of $T_n$ in Proposition \ref{lem:existwn}. Hence $T_n > 1$ and $\tau_n =1$. 

As $\| w_n(t) \|_{\q E(t)} \le t^{1/9} f_n(t)$, this gives the first part of \eqref{est:wn_E_L2}. Also notice that $\| \partial_x \tilde w_n(t) \|_{L^2} \le t^{5/18} f_n(t)$, and in view of \eqref{est:boot_wt}, we also obtain both estimates of \eqref{est:wn_xL2}.

\bigskip

\emph{Step 6. $L^2$ bound on $w_n$.} Finally, we will prove that 
\begin{equation}\label{lem:L2}
\| \hat w_n(t) \chi_n^{-1/2} \|_{L^2} \lesssim  A \delta^6 t^{1/9},\quad \forall t\in[0,1].
\end{equation} 

	Notice that, for each $n$, $\| \hat w_n \chi^{-1} \|_{L^\infty} \lesssim A \delta^3 t^{1/9}$, so that one has for free that $w_n \chi^{-1/2} \in L^\infty([0,1], L^2)$ and for all $t \in [0,1]$
\begin{align} \label{est:L2_chi_n}
 \| \hat w_n(t) \chi_n^{-1/2} \|_{L^2} \lesssim \| \hat w_n(t) \chi^{-1} \|_{L^\infty} \| \chi_n^{1/2} \|_{L^2} \lesssim_n A \delta^3 t^{1/9}.
 \end{align}
Since
	\[
	\partial_t \hat w_n - i \xi^3 \hat w_n =  i \xi \chi_n  \q F(u_n^3 - S_n^3),
	\]
 multiplying by $\overline{\hat w_n} \chi_n^{-1}$, integrating in $\xi$ and taking the real part
\begin{align*}
\frac{1}{2}\frac{d}{dt} \int |\hat w_n(t)|^2 \chi_n^{-1} d\xi = \Re \int i\xi  \q F((u_n^3 - S_n^3) \overline{\hat w_n} d\xi = - \int  (u_n^3 - S_n^3) \partial_x w_n dx.
\end{align*}
We claim that
\begin{align} \label{est:NL_L2}
\MoveEqLeft \left|\int (u_n^3 - S_n^3) \partial_x w_n dx \right| \lesssim \frac{1}{t} \left( \| w_n(t) \|_{\q E(t)} + \| S_n \|_{\q E(1)}^2 + \| e^{-t\partial_x^3}z \|_{\q E(t)}^2 \right) \\
& \qquad \cdot \left( \| w_n \|_{L^2}^2 + \| z \|_{L^1} \| w_n(t)  \|_{\q E(t)} \right).
\end{align}
To see this, we expand the terms (made of 4 factors) and split them depending on whether $w_n$ occurs  at most once or at least twice. In the following discussion, the factors $v_1, v_2$ stand for either one of $w_n$, $S_n$ or $e^{-t\partial_x^3}z$.

For terms where $w_n$ occur at most once, $e^{-t\partial_x^3}z$ also appear, and they all can take the form
\[ \int v_1 v_2 (e^{-t\partial_x^3} z) \partial_x w_n = \int \q F(v_1 v_2  \partial_x w_n) \overline{\q F(e^{-t\partial_x^3} z)} d\xi, \]
which is bounded by
\begin{align*}
\| \q F(v_1 v_2  \partial_x w_n) \|_{L^\infty} \| \q F(e^{-t\partial_x^3} z ) \|_{L^1} & = \| N[\tilde v_1, \tilde v_2, i\xi \tilde w_n](t) \|_{L^\infty} \| \hat z \|_{L^1} \\
& \lesssim \frac{1}{t} \| v_1(t) \|_{\q E(t)} \| v_2(t) \|_{\q E(t)} \| w_n(t) \|_{\q E(t)} \| \hat z \|_{L^1},
\end{align*}
where we used Lemma \ref{lem:estL2} in a crucial way.

For terms where $w_n$ occur at least twice, by integration by parts, we see that they can all take the form
\[ \int \partial_x(v_1 v_2) (w_n)^2 dx \]
and so, they are bounded by
\[ \| \partial_x(v_1 v_2)(t) \|_{L^\infty} \| w_n(t) \|_{L^2}^2 \lesssim \frac{1}{t} \| v_1(t) \|_{\q E(t)} \| v_2(t) \|_{\q E(t)} \| w_n(t) \|_{L^2}^2. \]
This proves \eqref{est:NL_L2}. 

Since $\| w_n \|_{L^2} \lesssim  \| \hat w_n \chi_n^{-1/2} \|_{L^2}$, and, by Step 5, $\| w_n(t) \|_{\q E(t)} \lesssim A \delta^3 t^{1/9}$ (valid for $t \in [0,1]$), \eqref{est:NL_L2} implies that
\[ \left| \frac{d}{dt} \| \hat w_n(t) \chi_n^{-1/2} \|_{L^2}^2 \right| \lesssim \frac{\delta^2}{t} \left( \| \hat w_n(t) \chi_n^{-1/2} \|_{L^2}^2 + A \delta^4 t^{1/9} \right). \]
Recalling that from \eqref{est:L2_chi_n}, $\| \hat w_n(t) \chi_n^{-1/2} \|_{L^2}  = O(t^{1/9})$, a Gronwall argument as in Step 2 gives \eqref{lem:L2}.
By Plancherel, the $L^2$ bound in \eqref{est:wn_E_L2} follows.
\end{proof}

\section{Proof of the main result}\label{sec:proofthm}
\begin{prop}[Uniqueness]\label{prop:uniq}
	There exists a universal constant $K>0$ such that, given $\eta>\delta^2/K^2$ and $T>0$, there exists at most one solution to \eqref{eq:w} satisfying
	\[
	\forall t \in (0,T], \quad \|w(t)\|_{\mathcal{E}(t)}\le K\sqrt{\eta} \quad \text{and} \quad  \|w(t)\|_{L^2}\le t^{\eta}.
	\]
\end{prop}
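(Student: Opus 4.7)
The approach is an $L^2$ energy estimate on $h := w_1 - w_2$. Since $u_1 - u_2 = h$, we can factor $u_1^3 - u_2^3 = h Q$ with $Q := u_1^2 + u_1 u_2 + u_2^2$, so the equation for $h$ reads
\[ \partial_t h + \partial_{xxx} h = \partial_x(h Q), \qquad \|h(t)\|_{L^2} \le 2 t^{\eta} \to 0 \text{ as } t \to 0^+. \]
Using $\int h_{xxx} h\,dx = 0$ and $\int \partial_x(hQ) h\,dx = \tfrac12 \int Q_x h^2\,dx$ (both justified by a regularization argument, e.g.\ via $\Pi_n$ as in Section~\ref{sec:aprox}, and passing to the limit), we get the standard inequality
\[ \frac{d}{dt}\|h(t)\|_{L^2}^2 \le \|Q_x(t)\|_{L^\infty}\,\|h(t)\|_{L^2}^2. \]

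The core of the plan is to bound $\|Q_x(t)\|_{L^\infty}$ via Lemma~\ref{lem:estE}. Expanding $Q_x$ as a sum of products of the form $u_i \partial_x u_j$ and applying the pointwise bounds of that lemma (with $v = S+w_i$, $v = S+w_j$, or a combination), we get
\[ \|Q_x(t)\|_{L^\infty} \lesssim \frac{1}{t}\bigl(\|S\|_{\mathcal{E}(t)} + \|w_1\|_{\mathcal{E}(t)} + \|w_2\|_{\mathcal{E}(t)}\bigr)\bigl(\|S\|_{\mathcal{E}(t)} + \|w_1\|_{\mathcal{E}(t)} + \|w_2\|_{\mathcal{E}(t)} + \nz{z}\bigr) + \nz{z}^2. \]
The hypothesis $\eta > \delta^2/K^2$ rewrites as $\delta < K\sqrt{\eta}$, so both $\|S\|_{\mathcal{E}(t)} \le \delta$ and $\nz{z} \le \delta$ are dominated by the larger quantity $K\sqrt{\eta}$; combined with $\|w_i\|_{\mathcal{E}(t)} \le K\sqrt{\eta}$, this gives a universal $C_0 > 0$ with
\[ \|Q_x(t)\|_{L^\infty} \le \frac{C_0 K^2 \eta}{t} + C_0 \delta^2 \qquad \text{for all } t \in (0,\min(T,1)]. \]

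A direct Gronwall integration between $0 < t_0 < t \le \min(T,1)$ then gives
\[ \|h(t)\|_{L^2}^2 \le e^{C_0 \delta^2 t}\bigl(t/t_0\bigr)^{C_0 K^2 \eta}\,\|h(t_0)\|_{L^2}^2 \le 4 e^{C_0 \delta^2}\,t^{C_0 K^2 \eta}\,t_0^{2\eta - C_0 K^2 \eta}. \]
The crucial calibration---and the main obstacle---is the choice of the universal constant $K$: the $1/t$ singularity in $\|Q_x\|_{L^\infty}$ is critical, so the Gronwall exponent is forced to be of order $K^2\eta$ and cannot be made arbitrarily small. Choosing $K$ universal and small enough that $C_0 K^2 < 2$ makes $2\eta - C_0 K^2\eta$ strictly positive, so letting $t_0 \to 0^+$ forces $h \equiv 0$ on $(0,\min(T,1)]$. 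For times $t > 1$ (if $T > 1$), uniqueness propagates from the coincidence at $t=1$ by the standard local $\mathcal{E}$-well-posedness theory of \cite{CCV20}. The hypothesis $\eta > \delta^2/K^2$ plays a single but essential role: it absorbs every $\delta$-contribution from the background $(S, e^{-t\partial_x^3}z)$ into the perturbation scale $K\sqrt{\eta}$, which is exactly what allows the Gronwall exponent to remain controlled by $K$ alone.
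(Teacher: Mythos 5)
Your argument is correct and matches the paper's proof essentially step for step: both set $h = w_1 - w_2$, run an $L^2$ energy estimate, bound $\|(u_1^2+u_1u_2+u_2^2)_x\|_{L^\infty}\lesssim K^2\eta/t$ via Lemma~\ref{lem:estE} (using $\eta>\delta^2/K^2$ to absorb the $S$ and $z$ contributions into $K\sqrt\eta$), and exploit $\|w_i(t)\|_{L^2}\le t^\eta$ to beat the critical $1/t$ singularity by taking $K$ small. The only cosmetic difference is the Gronwall implementation --- the paper takes $A=\sup_t t^{-\eta}\|h(t)\|_{L^2}$ and derives $A^2\le \tfrac{CK^2}{2}A^2$ directly, whereas you integrate from $t_0>0$ and let $t_0\to0^+$ --- and you are slightly more explicit about handling $T>1$ via forward well-posedness, a point the paper leaves implicit.
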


\begin{proof}
	Suppose $w_1, w_2$ are two solutions in the above conditions and set $w=w_1-w_2$. Then
	\[
	\partial_t w + \partial_x^3 w = (u_1^3-u_2^3)_x, \quad u_j(t)=S(t)+ e^{-t\partial_x^3}z + w_j(t), \ j=1,2.
	\]
	Observe that
	\[
	\|u_1(t)\|_{\mathcal{E}(t)},\ \|u_2(t)\|_{\mathcal{E}(t)}\lesssim \delta \lesssim K\sqrt{\eta}.
	\]
	Define $A=\sup_{t\in [0,T]} t^{-\eta}\|w(t)\|_{L^2}$ (which is finite by assumption). Direct integration and Lemma \ref{lem:estE} give that for all $t \in [0,T]$,
	\begin{align*}
	\|w(t)\|_{L^2}^2 & \le \int_0^t \int (u_1^3-u_2^3)_x w dx ds \le \frac{1}{2} \int_0^t \|(u_1^2 + u_1u_2+u_2^2)_x\|_{L^\infty}\|w(s)\|_{L^2}^2ds \\
	& \le C\int_0^t \frac{K^2\eta}{s}As^{2\eta}ds \le \frac{CK^2}{2} A^2t^{2\eta}.
	\end{align*}
	Dividing by $t^{2\eta}$ and taking the supremum in $t \in [0,T]$, we get $A^2\le \frac{CK^2}{2}A^2$, which implies $A=0$ for $K^2<2/C$.
\end{proof}

\begin{nb}
	In \cite{CCV20}, forward uniqueness of solutions in $\mathcal{E}$ was obtained for strictly positive times. The argument goes through an estimate for the $L^2$ norm on positive half-lines (which is finite for elements in $\mathcal{E}$).The bounds given by Lemma \ref{lem:estE} give a behavior of $1/t$, which must then be integrated in $(0,T)$. This can be compensated if one assumes the polynomial growth $\|w(t)\|_{L^2_x}\le t^\eta$. In conclusion, this strategy can be used to provide an alternate proof of Proposition \ref{prop:uniq} but not to further improve it.
\end{nb}

\begin{proof}[Proof of Theorem \ref{teo:existw}]
	Consider the approximations $w_n$ defined in Proposition \ref{lem:existwn}. By Proposition \ref{prop:aprioriI}, these solutions are defined on $[0,1]$ and
	\begin{equation}\label{eq:unifbound}
	\|w_n(t)\|_{\mathcal{E}(t)} \lesssim \delta^3 t^{1/9}, \quad \|w_n(t)\|_{L^2}\lesssim \delta^3 t^{1/18}.
	\end{equation}
	Notice that we also have, for $t\in(0,1]$,
	\[ \| \partial_\xi \tilde w_n(t)\|_{L^2} \lesssim \delta^3 t^{5/18} \quad \text{and} \quad \| \partial_t \tilde w_n(t)\|_{L^\infty} \lesssim \frac{\delta^3}{t^{8/9}}, \]
	so that, by Sobolev embedding,
	\begin{equation}\label{eq:holdercontxi}
	\forall t \in [0,1], \ \forall \xi_1,\xi_2 \in \m R, \quad |\tilde{w}_n(t,\xi_1)-\tilde{w}_n(t,\xi_2)|\lesssim \delta^3 t^{5/18}|\xi_1-\xi_2|^{1/2},
	\end{equation}
	and,
	\begin{equation}\label{eq:holdercontt}
	\forall t_1, t_2 \in [0,1], \ \forall \xi \in \m R, \quad |\tilde{w}_n(t_1,\xi)-\tilde{w}_n(t_2,\xi)|\lesssim \delta^3 |t_1^{1/9}-t_2^{1/9}|.
	\end{equation}
	Consequently, for any $R>0$, $(\tilde{w}_n)_{n\in\mathbb{N}}$ is equibounded and equicontinuous on $[0,1]\times [-R,R]$. By Ascoli-Àrzela theorem,
	\[
	\tilde{w}_n \to \tilde{w}\quad \text{uniformly in }[0,T]\times[-R,R]
	\] 
	and $\tilde{w}$ satisfies \eqref{eq:unifbound}, \eqref{eq:holdercontxi} and \eqref{eq:holdercontt}. In particular, $w\in \mathcal{E}((0,1)) \cap L^\infty((0,1),L^2(\R))$ and bound \eqref{eq:estfinal} holds.
	 
	We now prove that $w$ solves \eqref{eq:w} in the sense of distributions. Since $w_n$ is uniformly bounded in $\mathcal{E}((0,1))$, Lemma \ref{lem:estE} implies that $w_n$ is also equibounded and equicontinuous on $[\varepsilon, 1]\times [-R,R]$ for any $\varepsilon>0$ and $R>0$. Thus
	\[
	w_n\to w \quad \text{uniformly in }[\varepsilon,1]\times[-R,R].
	\]
	Since
	\[
	|w_n(t,x)|\lesssim \frac{1}{t^{1/3}\jap{x/t^{1/3}}^{1/4}},
	\]
	the uniform convergence implies that $w_n\to w$ in $L^\infty((\varepsilon,1), L^6(\R))$. The exact same reasoning also yields $S_n\to S$ in $L^\infty((\varepsilon,1), L^6(\R))$. These convergences can now be used to conclude that
	\[
	(u_n^3-S_n^3)_x\to (u^3-S^3)_x\text{ in }\mathcal{D}'((\varepsilon, 1) \times \R)
	\]
	and that $w$ satisfies \eqref{eq:w} in the distributional sense on $(0,1)$. 
	
	To extend the solution up to $t=+\infty$, observe that
	\[
	\forall t \in (0,1], \quad \|u(t)\|_{\mathcal{E}(t)}\le \|S(t)\|_{\mathcal{E}(t)} + \|e^{-t\partial_x^3}z\|_{\mathcal{E}(t)} + C \delta^3 t^{1/9}\le 3\delta.
	\] 
	Thus the global existence result of \cite[Theorem 2]{CCV20} can be applied (at $t=1/2$) to extend $u$ for all positive times. Finally, Proposition \ref{prop:uniq} with $\eta=1/18$ gives the uniqueness property (decreasing the value of $\delta_0$ further, if necessary).
\end{proof}

\bibliography{biblio_mkdv}
\bibliographystyle{plain}

\bigskip

\normalsize

\begin{center}
	{\scshape Simão Correia}\\
{\footnotesize
	Center for Mathematical Analysis, Geometry and Dynamical Systems,\\
	Department of Mathematics,\\
	Instituto Superior T\'ecnico, Universidade de Lisboa\\
	Av. Rovisco Pais, 1049-001 Lisboa, Portugal\\
	simao.f.correia@tecnico.ulisboa.pt
}
	\bigskip
	
	{\scshape Raphaël Côte}\\
	{\footnotesize
		Université de Strasbourg\\
		CNRS, IRMA UMR 7501\\
		F-67000 Strasbourg, France\\
		{cote@math.unistra.fr}
	}

\end{center}

\end{document}